\tikzset{
  symbol/.style={
    draw=none,
    every to/.append style={
      edge node={node [sloped, allow upside down, auto=false]{$#1$}}}
  }
}
\newcommand{\Z}{\mathbb{Z}}
\newcommand{\R}{\mathbb{R}}
\newcommand{\BC}{\mathbb{C}}
\newcommand{\OO}{{\mathcal O}}
\newcommand{\ol}{\overline}
\newcommand{\SL}{\mathrm{SL}}
\newcommand{\GL}{\mathrm{GL}}
\newcommand{\SO}{\mathrm{SO}}
\newcommand{\Sp}{\mathrm{Sp}}
\newcommand{\Fr}{\mathrm{Fr}}
\newcommand{\RG}{\mathrm{G}}
\newcommand{\WF}{\overline{\mathfrak{n}}^{m}}
\newcommand{\half}[1]{\frac{#1}{2}}
\newcommand{\comment}[1]{}
\newtheorem{thm}{Theorem}[section]
\newtheorem{cor}[thm]{Corollary}
\newtheorem{lemma}[thm]{Lemma}
\newtheorem{prop}[thm]{Proposition}
\newtheorem {conj}[thm]{Conjecture}
\newtheorem {ques/conj}[thm]{Question/Conjecture}
\newtheorem{defn}[thm]{Definition}
\newtheorem{remark}[thm]{Remark}
\newtheorem{exmp}[thm]{Example}
\newtheorem*{globalcond*}{Global Condition}
\newtheorem*{localcond*}{Local Condition}
\newtheorem*{globalconj*}{Global Conjecture}
\newtheorem*{localconj*}{Local Conjecture}
\newtheorem*{nonzero*}{Conjecture on the non-vanishing of the normalized intertwining operators}
\newtheorem*{holo*}{Conjecture on the holomorphicity of the normalized intertwining operators}
\DeclareMathOperator{\Ad}{Ad}
\numberwithin{equation}{section}
\let\oldbullet\bullet
\renewcommand{\bullet}{{\vcenter{\hbox{\tiny$\oldbullet$}}}}
\begin{document}
\title[Weak local Arthur packets conjecture]
{On the weak local Arthur packets conjecture for split classical groups}

\author[Baiying Liu]{Baiying Liu}
\address{Department of Mathematics\\
Purdue University\\
West Lafayette, IN, 47907, USA}
\email{liu2053@purdue.edu}

\author[Chi-Heng Lo]{Chi-Heng Lo}
\address{Department of Mathematics\\
Purdue University\\
West Lafayette, IN, 47907, USA}
\email{lo93@purdue.edu}

\subjclass[2000]{Primary 11F70, 22E50; Secondary 11F85, 22E55}

\date{\today}


\keywords{Admissible Representations, Local Arthur Packets, Weak Local Arthur Packets, Local Arthur Parameters, Nilpotent Orbits, Wavefront Sets}

\thanks{The research of the first named author is partially supported by the NSF Grants DMS-1702218, DMS-1848058}

\maketitle

\begin{abstract}
Recently, motivated by the theory of real local Arthur packets, making use of the wavefront sets of representations over non-Archimedean local fields $F$, 
Ciubotaru, Mason-Brown, and Okada defined the weak local Arthur packets consisting of certain unipotent representations and conjectured that they are unions of local Arthur packets. In this paper, we prove this conjecture for split classical groups with the assumption of the residue field characteristic of $F$ being large. 
In particular, this implies the unitarity of these unipotent representations.  
We also discuss the generalization of the weak local Arthur packets beyond unipotent representations which reveals the implications of a conjecture of Jiang on the structure of wavefront sets for representations in local Arthur packets.
\end{abstract}

\tableofcontents

\section{Introduction}
Let $F$ be a non-Archimedean field of characteristic zero. Let 
Let $\mathrm{G}_n=\Sp_{2n}, \SO_{2n+1}, \SO_{2n}^{\alpha}$ be quasi-split classical groups, where $\alpha$ is a square class in $F$, and let $G_n=\mathrm{G}_n(F)$. 
Here, we identify a square class with the corresponding quadratic character of the Weil group $W_F$ via the local class field theory. 
Then the Langlands dual groups are 
$$\widehat{\mathrm{G}}_n(\BC) = \SO_{2n+1}(\BC), \Sp_{2n}(\BC), \SO_{2n}(\BC).$$
Let ${}^L\mathrm{G}_n$ be the $L$-group of $G_n$,
$${}^L\mathrm{G}_n= 
\begin{cases}
\widehat{\mathrm{G}}_n(\BC) & \text{ when } \mathrm{G}_n=\Sp_{2n}, \SO_{2n+1},\\
\SO_{2n}(\BC) \rtimes W_F & \text{ when } \mathrm{G}_n=\SO_{2n}^{\alpha}.
\end{cases}
$$ 
In his fundamental work \cite{Art13}, Arthur introduced the local Arthur packets which are finite sets of representations of $G_n$, parameterized by local Arthur parameters. Local Arthur parameters are defined as
a direct sum of irreducible representations
$$\psi: W_F \times \SL_2(\mathbb{C}) \times \SL_2(\mathbb{C}) \rightarrow {}^L\mathrm{G}_n$$
\begin{equation}\label{lap}
  \psi = \bigoplus_{i=1}^r \phi_i \otimes S_{m_i} \otimes S_{n_i},  
\end{equation}
satisfying the following conditions:

(1) $\phi_i(W_F)$ is bounded and consists of semi-simple elements, and $\dim(\phi_i)=k_i$;

(2) the restrictions of $\psi$ to the two copies of $\SL_2(\mathbb{C})$ are analytic, $S_k$ is the $k$-dimensional irreducible representation of $\SL_2(\mathbb{C})$, and 
$$\sum_{i=1}^r k_im_in_i = N=N_n:= 
\begin{cases}
2n+1 & \text{ when } \mathrm{G}_n=\Sp_{2n},\\
2n & \text{ when } \mathrm{G}_n=\SO_{2n+1}, \SO_{2n}^{\alpha}.
\end{cases}
$$ 
The first copy of $\SL_2(\mathbb{C})$ is called the Deligne-$\SL_2(\mathbb{C})$, denoted by $\SL_2^D(\mathbb{C})$. The second copy of $\SL_2(\mathbb{C})$ is called the Arthur-$\SL_2(\mathbb{C})$, denoted by $\SL_2^A(\mathbb{C})$. 
Let $\OO_{\psi}^D$ and $\OO_{\psi}^A$ be the corresponding nilpotent orbits via restricting $\psi$ to the first and the second copy of $\SL_2(\mathbb{C})$, respectively (more precisely, see Definition \ref{def orbit}). 
We let $\Psi(G_n)$ denote the set of local Arthur parameters of $G_n$.
Assuming the Ramanujan conjecture, Arthur (\cite{Art13}) showed that these local Arthur packets characterize the local components of square-integrable automorphic representations.
Given a local Arthur parameter $\psi$ as in \eqref{lap}, the local Arthur packet is denoted by $\Pi_{\psi}$. An irreducible admissible representation $\pi$ of $G_n$ is called of Arthur type if it lies in a local Arthur packet.
As an application, Arthur proved the local Langlands correspondence for $G_n$. 

Given a local Arthur parameter $\psi$ as in \eqref{lap}, in a series of papers (\cite{Moe06a, Moe06b, Moe09, Moe10, Moe11}), M{\oe}glin explicitly constructed each local Arthur packet $\Pi_{\psi}$ and showed that it is of multiplicity free.
Then, Xu (\cite{Xu17}) gave an algorithm to determine whether the representations in M{\oe}glin's construction are nonzero, and 
Atobe (\cite{Ato20}) gave a refinement on the M{\oe}glin's construction, using the new derivatives introduced by himself and M\'inguez (\cite{AM20}), which makes it relatively easier to compute the $L$-data. 
Unlike local $L$-packets which are disjoint, local Arthur packets may have nontrivial intersections. 
Recently, Atobe (\cite{Ato22}), Hazeltine and the authors (\cite{HLL22}) independently studied the intersection problem of local Arthur packets for symplectic and split odd special orthogonal groups, and gave different algorithms to determine when an irreducible representation is of Arthur type and what are the local Arthur packets containing it. Note that understanding the intersection of local Arthur packets is considered as a key step towards the local  non-tempered Gan-Gross-Prasad problem (see \cite[Conjecture 7.1, Remark 7.3]{GGP20}).

Let $\mathrm{G}$ be a connected reductive group and $G=\mathrm{G}(F)$.
Given an irreducible representation $\pi$ of $G$, one important invariant is a set $\mathfrak{n}(\pi)$ which is defined to be all the $F$-rational nilpotent orbits $\mathcal{O}$ in the Lie algebra $\mathfrak{g}$ of $G$ such that the coefficient $c_{\mathcal{O}}(\pi)$ in the Harish-Chandra-Howe local expansion of the character $\Theta(\pi)$ of $\pi$ is nonzero (see \cite{HC78} and \cite{MW87}). 
Let $\mathfrak{n}^m(\pi)$ be the subset of $\mathfrak{n}(\pi)$ consisting of maximal nilpotent orbits, under the closure ordering of nilpotent orbits. Let $\ol{\mathfrak{n}}(\pi)$ and $\ol{\mathfrak{n}}^m(\pi)$ be the sets of corresponding nilpotent orbits over $\ol{F}$. $\ol{\mathfrak{n}}^m(\pi)$ is called the geometric wavefront set of $\pi$.
It is an interesting and long-standing question to study the structures of the sets $\mathfrak{n}(\pi)$,  $\mathfrak{n}^m(\pi)$, $\ol{\mathfrak{n}}(\pi)$ and $\ol{\mathfrak{n}}^m(\pi)$. 
For a long time, it is expected that the geometric wavefront set is a singleton. However, recently, Tsai (\cite{Tsa22}) constructed provides examples showing that the geometric wavefront set may not always be a singleton. Hence, the geometric wavefront set is still very complicated and hard to compute in general. 
Recenlty, Okada (\cite{Oka21}) introduced new invariants, the canonical unramified wavefront sets for representations $\pi$ of depth-0, denoted by $\underline{\mathrm{WF}}(\pi)$, which facilitate a lot the computation of the geometric wavefront sets for depth-0, especially unipotent  representations of p-adic reductive groups (\cite{CMO21, CMO22, CMO23}). In particular, in \cite{CMO22}, using the canonical unramified wavefront sets $\underline{\mathrm{WF}}(\pi)$, Ciubotaru, Mason-Brown, and Okada gave a new characterization of the local Arthur packets corresponding to {\it basic} local Arthur parameters, i.e., those are trivial on $W_F \times \SL^D_2(\mathbb{C})$, for connected reductive groups, inner to split. More precisely, 

\begin{thm}[{\cite[Theorem 3.0.3]{CMO22}}]\label{intro thm 1}
Let $\mathrm{G}$ be a connected reductive group, inner to split, $G=\mathrm{G}(F)$. Assume that there is a local Arthur packets theory for $G$ as conjectured in \cite[Conjecture 6.1]{Art89} and the residue field of $F$ has sufficiently large characteristic.
   Let $\psi$ be a basic local Arthur parameter of $G$ and denote by $\lambda$ the (real) infinitesimal parameter associated with $\psi$. Then the local Arthur packet corresponding to $\psi$ can be characterized as
   \begin{equation}\label{intro equ 1}
       \Pi_{\psi}= \{ \pi \in \Pi(G)_{\lambda}\ | \ \underline{\mathrm{WF}}(\pi) \leq d_{A}(\OO_{\psi}^A,1) \}. 
   \end{equation}
Here $\Pi(G)_{\lambda}$ consists of representations of $G$ with infinitesimal parameter $\lambda$ and $d_{A}$ is the Achar's duality map defined in \cite[\S 4]{Ach03}.
$\OO_{\psi}^A$ is defined similarly as above. 
\end{thm}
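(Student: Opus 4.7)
The plan is to establish \eqref{intro equ 1} as two inclusions, combining the hypothesized Arthur packet theory for $G$ with Lusztig's classification of unipotent representations and Okada's combinatorial computation of $\underline{\mathrm{WF}}(\pi)$.

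\emph{Setup and reduction to the unipotent case.} Since $\psi$ is basic, it factors through the Arthur-$\SL_2(\BC)$, so $\psi$ is determined by the nilpotent orbit $\OO_\psi^A \subset \hat{\mathfrak g}$, and the associated infinitesimal parameter $\lambda$ is real and unramified (the image of a hyperbolic element of $\SL_2^A(\BC)$ in $\hat G$). First I would verify, via the Langlands/Kazhdan–Lusztig–Vogan parameterization, that $\Pi(G)_\lambda$ consists precisely of Lusztig's unipotent representations with the prescribed central character. These are combinatorially labeled by pairs $(\OO, \mathcal L)$ of a nilpotent orbit in $\hat{\mathfrak g}$ and an $A(\OO)$-equivariant local system, and $\underline{\mathrm{WF}}(\pi)$ is computable on such data through the generalized Springer correspondence, as established in \cite{Oka21}.

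\emph{Forward inclusion $\Pi_\psi \subseteq \{\pi : \underline{\mathrm{WF}}(\pi) \leq d_A(\OO_\psi^A,1)\}$.} Using the Arthur packet theory assumed in the hypothesis, each $\pi \in \Pi_\psi$ has prescribed Langlands data attached to $(\OO_\psi^A,1)$. I would apply Okada's algorithm to compute $\underline{\mathrm{WF}}(\pi)$ and compare the output with the Achar duality $d_A(\OO_\psi^A,1)$, using that Achar's construction refines Lusztig–Spaltenstein duality precisely by tracking the local system. The required bound then reduces to the compatibility of Okada's recipe with Achar's duality on the combinatorial parameters, together with the standard inequality between the wavefront set and the Lusztig–Spaltenstein dual of the nilpotent support of $\pi$ (Barbasch–Vogan, Mœglin–Waldspurger).

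\emph{Reverse inclusion, the main obstacle.} This is a matching argument and is where the bulk of the work lies. The goal is to show that every $\pi \in \Pi(G)_\lambda$ satisfying $\underline{\mathrm{WF}}(\pi) \leq d_A(\OO_\psi^A,1)$ lies in $\Pi_\psi$. Under Lusztig's parameterization, such a $\pi$ corresponds to some pair $(\OO,\mathcal L)$ whose Achar-dual equals the wavefront set; I would then show that all $(\OO,\mathcal L)$ arising this way are exhausted by Mœglin's (or Arthur's) construction of $\Pi_\psi$. The delicate point, and the reason Achar's $d_A$ rather than plain Lusztig–Spaltenstein duality appears, is that $\underline{\mathrm{WF}}(\pi)$ detects the local system on the nilpotent orbit, so one must match the enhancements on both sides on the nose. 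I would handle this by bookkeeping in Lusztig's families together with the explicit fibers of $d_A$, using the assumption on large residue characteristic to invoke the full generalized Springer correspondence and avoid pathologies. A cardinality check — both sides are naturally indexed by the same set of enhanced parameters — then closes the argument.
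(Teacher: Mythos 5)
First, a point of comparison: this statement is not proved in the paper at all. It is Theorem 1.1, quoted verbatim from \cite[Theorem 3.0.3]{CMO22} (Ciubotaru--Mason-Brown--Okada) and used later as an input; there is no proof in this paper to compare your argument against. Your sketch does follow the general strategy of the cited work (reduce to unipotent representations with real infinitesimal parameter, compute the canonical unramified wavefront set combinatorially, compare with Achar's duality), but as a proof it has concrete gaps.

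The main missing idea is the Aubert--Zelevinsky involution. The wavefront-set formula that drives the argument computes $\underline{\mathrm{WF}}(\pi)$ in terms of the Deligne--Langlands--Lusztig parameter of $\widehat{\pi}$, not of $\pi$ (compare Theorem \ref{thm computation of WF} in the geometric setting), and the packet of a basic $\psi$ is reached through $\Pi_{\psi}=\{\widehat{\pi}\ |\ \pi\in\Pi_{\phi_{\widehat{\psi}}}\}$ with $\phi_{\widehat{\psi}}$ the tempered (open) parameter in $\Phi(G)_{\lambda}$. Your forward inclusion instead asserts that each $\pi\in\Pi_{\psi}$ ``has prescribed Langlands data attached to $(\OO_{\psi}^A,1)$,'' which is not correct: the $L$-parameters of members of $\Pi_{\psi}$ vary over $\Phi(G)_{\lambda}$; it is the duals $\widehat{\pi}$ that all have the tempered parameter whose orbit is $\OO_{\psi}^A$. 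Moreover, the ``standard inequality'' of Barbasch--Vogan and M{\oe}glin--Waldspurger you invoke concerns the geometric wavefront set and the coarse duality $d_{BV}$; it does not give the refined bound in the order on pairs (orbit, enhancement) needed for $d_A$. Finally, the reverse inclusion cannot be closed by a cardinality check: the right-hand side of \eqref{intro equ 1} is defined by an inequality and carries no a priori indexing by enhanced parameters. What is actually needed is the order-theoretic property of Achar's duality that forces $d_A(\OO_{\phi_{\widehat{\pi}}},\rho)\leq d_A(\OO_{\psi}^A,1)$ (with $\OO_{\phi_{\widehat{\pi}}}\leq\OO_{\psi}^A$ automatic) to imply $\phi_{\widehat{\pi}}$ is the open tempered parameter. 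This refinement is precisely why $d_A$ and the canonical unramified wavefront set appear in the statement: the present paper exists because the analogous condition with $d_{BV}$ and the geometric wavefront set does not single out one packet but only a union of packets over the fiber of $d_{BV}$, so any argument that does not use the finer invariant of the local system/Lusztig canonical quotient cannot yield equality in \eqref{intro equ 1}.
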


An interesting phenomenon discovered by Ciubotaru, Mason-Brown, and Okada in \cite{CMO22} is that if replacing the canonical unramified wavefront set $\underline{\mathrm{WF}}(\pi)$ by the geometric wavefront set $\ol{\mathfrak{n}}^m(\pi)$ and replacing Achar's duality $d_A$ by the Barbasch-Vogan duality $d_{BV}$, then 
the right hand side of \eqref{intro equ 1} becomes much larger than $\Pi_{\psi}$. 
Here, the Barbasch-Vogan duality $d_{BV}$ is between the nilpotent orbits of $\widehat{\mathfrak{g}}(\BC) $ and $\mathfrak{g}(\BC)$ (see \cite{Spa82, BV85, Lus84, Ach03} and \S \ref{sec BV dual} for details).
Inspired by the case of real reductive groups, they conjectured that it would be a union of local Arthur packets as follows. 

\begin{conj}[{\cite[Conjecture 3.1.2]{CMO22}}]\label{conj weak packet intro}
Let $\mathrm{G}$ be a connected reductive group and $G=\mathrm{G}(F)$. Assume that there is a local Arthur packets theory for $G$ as conjectured in \cite[Conjecture 6.1]{Art89}.
Let $\psi$ be a basic local Arthur parameter of $G$ and denote $\lambda$ the (real) infinitesimal parameter associated with $\psi$. Then the weak local Arthur packet defined by
\begin{equation}\label{eq weak packet leq intro}
    \Pi_{\psi}^{\textrm{Weak}}:= \{ \pi \in \Pi(G)_{\lambda}\ | \ \WF(\pi) \leq d_{BV}(\OO_{\psi}^A) \}
\end{equation}
is a union of local Arthur packets.
\end{conj}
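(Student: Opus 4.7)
The plan is to prove the conjecture by identifying $\Pi_{\psi}^{\text{Weak}}$ explicitly with a union of local Arthur packets whose parameters share the infinitesimal parameter $\lambda$ with $\psi$. The main ingredients will be M{\oe}glin's explicit construction of $\Pi_{\psi'}$ for split classical groups, the expected upper bound $\WF(\pi) \leq \OO_{\psi'}^{D}$ for $\pi \in \Pi_{\psi'}$ (in the spirit of Jiang's conjecture), and the algorithms of \cite{HLL22, Ato22} for detecting membership in local Arthur packets. Concretely, I would define $\Psi^{\text{Weak}}(\psi) \subseteq \Psi(G_n)$ as the set of local Arthur parameters $\psi'$ whose infinitesimal parameter equals $\lambda$ and which satisfy $\OO_{\psi'}^{D} \leq d_{BV}(\OO_{\psi}^{A})$. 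Since $\psi$ is basic, $\lambda$ has a simple description in terms of Jordan blocks, and $\Psi^{\text{Weak}}(\psi)$ can be enumerated combinatorially by redistributing the Jordan-type data between the Deligne-$\SL_2$ and Arthur-$\SL_2$.

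For the forward inclusion $\bigcup_{\psi' \in \Psi^{\text{Weak}}(\psi)} \Pi_{\psi'} \subseteq \Pi_{\psi}^{\text{Weak}}$, I would invoke the wavefront bound $\WF(\pi) \leq \OO_{\psi'}^{D}$ for $\pi \in \Pi_{\psi'}$, available for classical groups through a combination of M{\oe}glin's construction and Jacquet module analysis. Combined with the defining condition $\OO_{\psi'}^{D} \leq d_{BV}(\OO_{\psi}^{A})$ on $\psi'$, this yields $\WF(\pi) \leq d_{BV}(\OO_{\psi}^{A})$, placing $\pi$ in $\Pi_{\psi}^{\text{Weak}}$.

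For the reverse inclusion, given $\pi \in \Pi_{\psi}^{\text{Weak}}$ the key is to prove $\pi$ is of Arthur type; once this is known, matching the wavefront bound against the possible $\OO_{\psi'}^{D}$ forces $\pi \in \Pi_{\psi'}$ for some $\psi' \in \Psi^{\text{Weak}}(\psi)$. To establish Arthur type, I would proceed by induction on the Langlands data using the algorithms of \cite{HLL22, Ato22}: if at some stage the Arthur-type test fails, I would extract from $\pi$ an appropriate Atobe-M{\'i}nguez derivative or Jacquet module whose wavefront set strictly exceeds $d_{BV}(\OO_{\psi}^{A})$, contradicting $\pi \in \Pi_{\psi}^{\text{Weak}}$.

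The main obstacle is the reverse inclusion, namely controlling the wavefront sets of representations that are not of Arthur type yet share the infinitesimal parameter $\lambda$. Since Jiang's conjecture provides no a priori bound outside the Arthur-type setting, the argument must proceed combinatorially on the Langlands data. The anticipated technical core is the interplay between the wavefront set under Aubert-Zelevinsky duality and the derivative/Jacquet-module operations driving the \cite{HLL22, Ato22} algorithms, which should supply the required contradiction in the non-Arthur-type case and thereby complete the proof.
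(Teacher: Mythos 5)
Your outline has the right global shape (two inclusions, a Jiang-type upper bound for the easy direction, and an ``every member is of Arthur type'' statement for the hard one), but both key inputs are mis-stated or missing. For the inclusion $\bigcup_{\psi'}\Pi_{\psi'}\subseteq\Pi_{\psi}^{\textrm{Weak}}$, the bound you invoke, $\WF(\pi)\leq\OO_{\psi'}^{D}$, does not even typecheck: $\OO_{\psi'}^{D}$ is a nilpotent orbit of $\widehat{\mathfrak{g}}(\BC)$ while $\WF(\pi)$ consists of orbits of $\mathfrak{g}(\BC)$. The correct expected bound is $\WF(\pi)\leq d_{BV}(\OO_{\psi'}^{A})$, which is precisely Conjecture~\ref{conj Jiang}(i); it is \emph{not} ``available through M{\oe}glin's construction and Jacquet module analysis'' --- it is a deep conjecture, and the paper obtains this direction only by assuming Conjecture~\ref{conj Jiang}(i) (or, for split $\SO_{2n+1}$ and $\Sp_{2n}$, by citing Theorem~\ref{thm Jiang B C}, which rests on the CMO wavefront-set computations, not on elementary Jacquet-module arguments). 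Likewise the indexing set of the union should be cut out by $d_{BV}(\OO_{\psi'}^{A})=d_{BV}(\OO_{\psi_0}^{A})$ inside $\Psi(G_n)_{\lambda}$, not by a closure condition on the Deligne orbit $\OO_{\psi'}^{D}$.

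More seriously, your plan for the hard inclusion (every $\pi\in\Pi_{\psi_0}^{\textrm{Weak}}$ is of Arthur type) has no workable mechanism: a derivative or Jacquet module lives on a smaller group, so ``its wavefront set strictly exceeds $d_{BV}(\OO_{\psi_0}^{A})$'' is not a meaningful contradiction, and the algorithms of \cite{HLL22, Ato22} test Arthur type of a Langlands datum but do not interact with wavefront sets. The paper's proof runs through two ingredients absent from your proposal: (1) Theorem~\ref{thm computation of WF} (this is where the large-residue-characteristic and unipotent/real-infinitesimal-character hypotheses enter), which gives $\WF(\pi)=\{d_{BV}(\OO_{\phi_{\widehat{\pi}}})\}$ and converts the weak-packet condition into the statement that $\phi_{\widehat{\pi}}$ lies in $(d_{BV})^{-1}_{\Phi(G_n)_{\lambda}}(d_{BV}(\OO_{\phi^0}))$ with $\phi^0$ tempered; and (2) the purely combinatorial Proposition~\ref{prop fiber tempered nilpotent orbit}, the genuinely new step, asserting that every $L$-parameter in that fiber is of Arthur type, proved via the explicit description of the $d_{BV}$-fibers from \cite{LLS23} (Lemma~\ref{lemma nec}, Corollary~\ref{cor fiber partition refinement}) together with Lemma~\ref{lem key observation tempered} and Corollary~\ref{cor special case A-type}. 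Once $\phi_{\widehat{\pi}}=\phi_{\widehat{\psi}}$, membership $\pi\in\Pi_{\psi}$ follows from $\Pi_{\phi_{\widehat{\psi}}}\subseteq\Pi_{\widehat{\psi}}$ and the compatibility of packets with the Aubert--Zelevinsky involution, not from the intersection algorithms. Without these two ingredients your argument does not establish the reverse inclusion, and hence does not prove the conjecture.
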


In this paper, we prove this conjecture for split classical groups with the assumption of the residue field characteristic of $F$ being large. We also generalize the definition of weak local Arthur packets to general representations of 
Arthur type (that is lying in some local Arthur packets) and prove the analogue results assuming Jiang's Conjecture on wavefront sets of representations in local Arthur packets as follows. 

\begin{conj}[Jiang's Conjecture, \cite{Jia14}]\label{conj Jiang}
Let $\mathrm{G}$ be a connected reductive group and $G=\mathrm{G}(F)$. 
Assume that there is a local Arthur packets theory for $G$ as conjectured in \cite[Conjecture 6.1]{Art89}.
Let $\psi$ be a local Arthur parameter of $G$, and $\Pi_\psi$ be the local Arthur packet associated with $\psi$. Then the following holds.
\begin{enumerate}
    \item [(i)] For any $\pi\in \Pi_{\psi}$, any nilpotent orbit $\OO$ in $\WF(\pi)$ has the property that
    $\OO \leq d_{BV}(\OO_{\psi}^A). $
    \item [(ii)] There exists at least one member $\pi \in \Pi_{\psi}$ having the property that $d_{BV}(\OO_{\psi}^A) \in 
    \WF(\pi)$.
\end{enumerate}
\end{conj}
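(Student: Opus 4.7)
The strategy is to reduce Jiang's Conjecture to the basic parameter case, handled by Theorem \ref{intro thm 1}, via induction on the total Deligne-$\SL_2$ content of $\psi$. For part (i) in the basic case, I would argue that the bound $\underline{\mathrm{WF}}(\pi) \leq d_A(\OO_\psi^A, 1)$ from Theorem \ref{intro thm 1} implies $\WF(\pi) \leq d_{BV}(\OO_\psi^A)$, because (a) the geometric wavefront set is the image of the canonical unramified wavefront set under the natural map to geometric orbits, and (b) Achar's duality satisfies $d_A(\OO, 1) \leq d_{BV}(\OO)$ in the closure order of $\mathfrak{g}(\BC)$. For part (ii) in the basic case, one identifies a canonical member of $\Pi_\psi$, for instance the Aubert--Zelevinsky dual of a suitable tempered representation with the same infinitesimal parameter, and verifies that it attains $d_{BV}(\OO_\psi^A)$ in its geometric wavefront set using the explicit classification of unipotent representations in \cite{CMO22}.

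The inductive step would use Atobe's refinement of M{\oe}glin's construction via extended multi-segments: given $\pi \in \Pi_\psi$, realize $\pi$ as the unique irreducible subrepresentation of $\Ind_P^{G_n}(\tau \boxtimes \pi')$, where $P$ has a $\GL$-Levi factor, $\tau$ is a Speh-type representation on this factor corresponding to a segment of $\psi$ with nontrivial Deligne-$\SL_2$, and $\pi' \in \Pi_{\psi'}$ for a strictly simpler parameter $\psi'$. Then one invokes the M{\oe}glin--Waldspurger formula identifying the top term of the Harish-Chandra--Howe expansion of $\Ind_P^{G_n}(\tau \boxtimes \pi')$ with a Lusztig--Spaltenstein induction of $\WF(\tau)$ and $\WF(\pi')$, together with a combinatorial identity asserting that $d_{BV}(\OO_\psi^A)$ is the Lusztig--Spaltenstein induction of $d_{BV}(\OO_{\psi'}^A)$ and the rectangular Jordan block associated with $\tau$. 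The inductive hypothesis applied to $\pi'$ then delivers both parts for $\pi$; for part (ii), one chooses the inductive representative $\pi'$ that witnesses the maximum on the smaller packet and checks that the resulting constituent of $\Ind_P^{G_n}(\tau \boxtimes \pi')$ inherits this property.

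The main obstacle is the combinatorial identity just mentioned: translating the Barbasch--Vogan duality through the removal of a segment from an Arthur parameter, particularly when the segment carries both Deligne- and Arthur-$\SL_2$ data, is a subtle orbit-theoretic statement requiring a case-by-case analysis using the explicit Barbasch--Vogan duality formulae for classical groups. A secondary difficulty, crucial for part (ii), is preventing cancellation in the top term of the induced character expansion; in the basic case Okada's canonical unramified wavefront set machinery \cite{Oka21} handles this, but for members of arbitrary positive depth one may need to develop an analogous refinement, or alternatively a direct argument controlling the wavefront set via the Atobe--M\'{i}nguez derivatives \cite{AM20} and Xu's non-vanishing algorithm \cite{Xu17}.
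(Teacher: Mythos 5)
First, a point of calibration: the statement you are proving is precisely Jiang's Conjecture, which this paper does \emph{not} prove. It is quoted from \cite{Jia14} and used as a hypothesis; the only cases known (Theorem \ref{thm Jiang B C}: split $\SO_{2n+1}$ and $\Sp_{2n}$, $\psi$ trivial on $W_F$, large residue characteristic) are imported from \cite{HLLS23}, which in turn combines \cite{CMO21, CMO22, CMO23, HLLZ22}. So there is no proof in the paper to compare against, and a complete argument here would be a substantial new result, not a routine verification.

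As a proof attempt, your proposal has genuine gaps. The decisive one is the inductive step: you realize $\pi\in\Pi_\psi$ as the unique irreducible subrepresentation of $\Ind_P^{G_n}(\tau\boxtimes\pi')$ and then invoke a ``M{\oe}glin--Waldspurger formula'' expressing the top term of the Harish-Chandra--Howe expansion of this induced representation via Lusztig--Spaltenstein induction of $\WF(\tau)$ and $\WF(\pi')$. Even granting such a statement for the \emph{full} induced representation, it says nothing about the individual irreducible subquotient $\pi$: wavefront sets are not functorial under passage to subquotients, the top orbit can drop or be carried by a different constituent, and controlling exactly this (your ``cancellation'' problem) is the core open difficulty behind Jiang's conjecture, not a secondary issue one can defer. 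Moreover, by Tsai's examples (\cite{Tsa22}) the geometric wavefront set need not be a singleton, so any argument that tracks ``the'' top orbit through induction needs extra care. Second, your treatment of the basic case and of part (ii) leans on the canonical unramified wavefront set and the computations of \cite{CMO21, CMO22, CMO23} (e.g.\ Theorem \ref{thm computation of WF}), which exist only for depth-zero/unipotent representations with the residue characteristic assumption; for general local Arthur parameters (ramified $\rho_i$'s, positive depth) you concede that an ``analogous refinement'' would have to be developed, which is to admit the argument is not there. Finally, the combinatorial identity you postulate --- that $d_{BV}(\OO_\psi^A)$ is the Lusztig--Spaltenstein induction of $d_{BV}(\OO_{\psi'}^A)$ and a rectangular block --- is asserted, not proved, and is false in general without special-shape hypotheses on the removed summand (collapses intervene), so even the purely combinatorial backbone of the induction is unestablished.
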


Jiang's conjecture describes the connection between the structures of local Arthur parameters and the geometric wavefront sets of representations in local Arthur packets. It is a natural generalization of Shahidi's conjecture which says that tempered $L$-packets of quasi-split groups have generic members, and enhanced Shahidi's conjecture which says that a local Arthur packet of a quasi-split group has a generic member if and only if it is tempered. 
There has been many progresses on Jiang's conjecture, see \cite{LS23}, \cite{HLLS23} for more details. In particular, combining the results of \cite{CMO21, CMO22, CMO23, HLLZ22, HLLS23}, Jiang's conjecture is true for all local Arthur parameters of split $\SO_{2n+1}$ and $\Sp_{2n}$ which are trivial on $W_F$. More precisely, 

\begin{thm}[{\cite[Theorem 7.4]{HLLS23}}]\label{thm Jiang B C}
    Assume that the residue field of $F$ has sufficiently large characteristic. Conjecture \ref{conj Jiang} holds for any local Arthur parameter $\psi$ of split $\SO_{2n+1}(F)$ and $\Sp_{2n}(F)$ whose restriction to $W_F$ is trivial.
\end{thm}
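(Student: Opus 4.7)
The plan is to combine the characterization of basic local Arthur packets from Theorem \ref{intro thm 1} with the explicit wavefront set computations for unipotent representations in \cite{CMO21, CMO22, CMO23} and M{\oe}glin's construction of $\Pi_\psi$, reducing the general case of parameters trivial on $W_F$ to the basic case via Aubert duality and explicit derivatives.

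First, I would handle part (i) (the upper bound). For a basic parameter $\psi_0$ (trivial on $W_F \times \SL_2^D$), Theorem \ref{intro thm 1} gives $\underline{\mathrm{WF}}(\pi) \leq d_A(\OO_{\psi_0}^A, 1)$ for every $\pi \in \Pi_{\psi_0}$. The first step is to pass from the canonical unramified wavefront set to the geometric wavefront set and from Achar's duality to Barbasch--Vogan duality $d_{BV}$; this uses Achar's comparison theorem (\cite{Ach03}) together with the fact that for split classical groups and unipotent representations, the unramified wavefront set determines the geometric one. For a general parameter $\psi = \bigoplus_i S_{m_i}\otimes S_{n_i}$ trivial on $W_F$ but with some $m_i \geq 2$, the key observation is that $\OO_\psi^A$ depends only on the $n_i$'s, which also specify a basic companion parameter $\psi_0$. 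One then uses M{\oe}glin's construction to realize each $\pi \in \Pi_\psi$ by applying a chain of (Atobe--M\'inguez) derivatives or Jacquet functors to elements of $\Pi_{\psi_0}$, and shows that these operations can only move the wavefront set down in the closure order; this propagates the bound $\WF \leq d_{BV}(\OO_\psi^A)$ from $\psi_0$ to $\psi$.

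Next, for part (ii) (saturation of the bound), I would exhibit an explicit $\pi_0 \in \Pi_\psi$ whose wavefront set attains $d_{BV}(\OO_\psi^A)$. A natural candidate is the ``anti-tempered'' (Aubert dual of the generic) member of $\Pi_{\hat\psi}$, where $\hat\psi$ swaps the Deligne and Arthur $\SL_2$'s; this is still trivial on $W_F$, and Aubert duality sends $\Pi_{\hat\psi}$ bijectively to $\Pi_\psi$. Using M{\oe}glin's formulas one writes $\pi_0$ as a Langlands quotient whose $L$-data are explicitly determined by the $(m_i, n_i)$, and then one computes $\WF(\pi_0)$ directly via the symbol/partition algorithm of \cite{CMO23}. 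The explicit computation is designed to match the combinatorial formula for $d_{BV}$ applied to the partition $\bigsqcup_i (n_i^{m_i})$ underlying $\OO_\psi^A$, producing exactly the orbit $d_{BV}(\OO_\psi^A)$ in $\WF(\pi_0)$.

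The main obstacle is the combinatorial matching in part (ii): one must reconcile M{\oe}glin's parameterization of members of $\Pi_\psi$ (in terms of Jordan blocks, signs, and order relations) with the partition/symbol data that control Okada's canonical unramified wavefront set, and then verify that the Barbasch--Vogan dual of $\OO_\psi^A$ emerges correctly from both sides. This is where the structural results of \cite{HLLZ22} on wavefront sets under Aubert involution, together with the reduction arguments of \cite{HLLS23} bridging the basic and general unipotent cases, carry the decisive weight; everything else -- the upper bound, the translation between dualities, and the passage from $\underline{\mathrm{WF}}$ to $\overline{\mathfrak{n}}^m$ -- is essentially formal once Theorem \ref{intro thm 1} is granted.
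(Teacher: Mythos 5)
You have a genuine gap, and it sits at the central step of your part (i). You propose to realize each $\pi\in\Pi_{\psi}$, for $\psi$ trivial on $W_F$ with some $m_i\geq 2$, as the output of a chain of Atobe--M\'inguez derivatives or Jacquet functors applied to members of a ``basic companion'' packet $\Pi_{\psi_0}$, and you assert that these operations ``can only move the wavefront set down.'' Neither assertion is available: $\Pi_{\psi}$ and $\Pi_{\psi_0}$ live over different infinitesimal parameters, M{\oe}glin's construction does not present $\Pi_{\psi}$ as derivatives of $\Pi_{\psi_0}$ in any form that tracks wavefront sets, and no monotonicity of the geometric wavefront set under Jacquet-module/derivative operations is proved in the literature you cite (and it is not true in the generality you need). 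Your part (ii) also leans on an object that generally does not exist: $\Pi_{\widehat{\psi}}$ has a generic member only when $\widehat{\psi}$ is tempered, i.e.\ only in the basic case, so ``the Aubert dual of the generic member of $\Pi_{\widehat{\psi}}$'' is not a candidate for general $\psi$ trivial on $W_F$; and the symbol-by-symbol matching you propose is not needed at all.

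For comparison, the paper does not prove this statement itself (it quotes \cite{HLLS23}), but the intended route is visible in Theorem \ref{thm computation of WF} and in the proofs of Theorems \ref{thm main} and \ref{thm main 2}. Since $\psi|_{W_F}$ trivial forces every $\pi\in\Pi_{\psi}$ to be unipotent with real infinitesimal parameter, the formula $\WF(\pi)=\{d_{BV}(\OO_{\phi_{\widehat{\pi}}})\}$ of \cite{CMO23} applies to \emph{every} member of \emph{every} such packet, not only to basic ones, so no reduction to the basic case is required. Part (i) then follows from the closure-ordering theorem of \cite{HLLZ22}: $\widehat{\pi}\in\Pi_{\widehat{\psi}}$ gives $\OO_{\phi_{\widehat{\pi}}}\geq \OO_{\phi_{\widehat{\psi}}}=\OO_{\psi}^{A}$, and applying the order-reversing map $d_{BV}$ yields $\WF(\pi)\leq d_{BV}(\OO_{\psi}^{A})$. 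Part (ii) follows by choosing $\pi$ with $\widehat{\pi}\in\Pi_{\phi_{\widehat{\psi}}}\subseteq\Pi_{\widehat{\psi}}$ (using the compatibility $\Pi_{\widehat{\psi}}=\{\widehat{\pi}\mid \pi\in\Pi_{\psi}\}$), for which the same formula gives $\WF(\pi)=\{d_{BV}(\OO_{\psi}^{A})\}$ exactly; the Barbasch--Vogan dual is produced by the wavefront formula itself, so there is no combinatorial matching of M{\oe}glin data with symbols to carry out. If you want to repair your write-up, replace the derivative/monotonicity step and the explicit computation by these two inputs.
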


Let $\Lambda(G_n)$ be the set of infinitesimal parameters of $G_n$.
For $\lambda \in \Lambda(G_n)$, we also let $\Phi(G_n)_{\lambda}$ (resp. $\Psi(G_n)_{\lambda}$) be the set of $L$-parameters (resp. local Arthur parameters) of $G_n$ whose associated infinitesimal parameter is $\lambda$.

Now we can state our main result.

\begin{thm}[{Theorem \ref{thm main}}]\label{thm main intro}
Let $G_n$ be the split group $\SO_{2n+1}(F)$, $\Sp_{2n}(F)$ or $\SO_{2n}(F)$. Assume that the residue field of $F$ has sufficiently large characteristic.
\begin{enumerate}
    \item [(a)] 
    For any basic local Arthur parameter $\psi_0$ of $G_n$, the weak local Arthur packet $\Pi_{\psi_0}^{\textrm{Weak}}$ is contained in a union of local Arthur packets
        \begin{align*}
        \Pi_{\psi_0}^{\textrm{Weak}}\subseteq \bigcup_{ \psi \in (d_{BV})_{\Psi(G_n)_{\lambda}}^{-1}(\OO')} \Pi_{\psi},
    \end{align*}
     where $\lambda= \lambda_{\psi^0}$, $\OO'=d_{BV}(\OO_{\psi_0}^A)$, and 
     \[(d_{BV})^{-1}_{\Psi(G_n)_{\lambda}}(\OO'):=\{ \psi \in \Psi(G_n)_{\lambda} \ | \ d_{BV}(\OO_{\psi}^A)=\OO' \}.\]
     \item [(b)] Moreover, assume Conjecture \ref{conj Jiang}(i) holds for any $\psi \in \Psi(G_n)$ whose restriction to $W_F$ is trivial. Then we have the other direction of containment
     \begin{align*}
        \Pi_{\psi_0}^{\textrm{Weak}}\supseteq \bigcup_{ \psi \in (d_{BV})_{\Psi(G_n)_{\lambda}}^{-1}(\OO')} \Pi_{\psi},
    \end{align*}
    which proves Conjecture \ref{conj weak packet intro}. In particular, by Theorem \ref{thm Jiang B C}, Conjecture \ref{conj weak packet intro} holds for split $\SO_{2n+1}(F)$ and $\Sp_{2n}(F)$ without the assumption of Conjecture \ref{conj Jiang}(i). 
\end{enumerate}
\end{thm}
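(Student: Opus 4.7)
The plan is to prove part (b) first as a direct consequence of Theorem \ref{thm Jiang B C}, and then to tackle part (a) by combining the CMO characterization of basic Arthur packets (Theorem \ref{intro thm 1}) with the M\oe glin-Atobe explicit construction and the algorithms of \cite{HLL22} and \cite{Ato22}.

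For part (b), fix $\psi \in (d_{BV})^{-1}_{\Psi(G_n)_\lambda}(\OO')$ and $\pi \in \Pi_\psi$; the membership $\pi \in \Pi(G_n)_\lambda$ is immediate. The key preliminary is to verify that $\psi|_{W_F}$ is trivial so that Theorem \ref{thm Jiang B C} applies. Indeed, $\psi_0$ is basic, hence trivial on $W_F \times \SL_2^D(\BC)$, so $\lambda = \lambda_{\psi_0}$ has trivial inertia action and Frobenius eigenvalues of the form $q^{k/2}$ with $k \in \Z_{\geq 0}$. Writing $\psi = \bigoplus_i \phi_i \otimes S_{m_i} \otimes S_{n_i}$, the identity $\lambda_\psi|_{I_F} = \lambda_{\psi_0}|_{I_F}$ forces each $\phi_i|_{I_F}$ to be trivial, reducing $\phi_i$ to a direct sum of unramified characters; matching the positive real Frobenius eigenvalues of $\lambda_{\psi_0}$ then forces each such character to be trivial. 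Thus $\psi|_{W_F}$ is trivial, Theorem \ref{thm Jiang B C} yields $\WF(\pi) \leq d_{BV}(\OO_\psi^A) = \OO'$, and so $\pi \in \Pi_{\psi_0}^{\textrm{Weak}}$.

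For part (a), take $\pi \in \Pi_{\psi_0}^{\textrm{Weak}}$, so $\pi \in \Pi(G_n)_\lambda$ with $\WF(\pi) \leq \OO'$; I must produce $\psi \in (d_{BV})^{-1}_{\Psi(G_n)_\lambda}(\OO')$ with $\pi \in \Pi_\psi$. The strategy proceeds in two stages. First, I would establish that $\pi$ is of Arthur type: since $\lambda$ arises from a basic parameter it is sufficiently ``integral'' to be handled by Atobe's refinement \cite{Ato20} of M\oe glin's construction, and the HLL22 algorithm \cite{HLL22} then effectively produces an Arthur parameter $\psi' \in \Psi(G_n)_\lambda$ with $\pi \in \Pi_{\psi'}$. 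Second, among all Arthur parameters containing $\pi$, I would select one with $d_{BV}(\OO_\psi^A) = \OO'$ via combinatorial ``Jordan block redistribution'' operations between $\SL_2^A(\BC)$ and $\SL_2^D(\BC)$ that preserve both the infinitesimal parameter $\lambda$ and the membership $\pi \in \Pi_\psi$, while enlarging $d_{BV}(\OO_\psi^A)$ monotonically up to $\OO'$.

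The main technical obstacle is the second stage: an Arthur parameter first produced by the M\oe glin-Atobe construction or the HLL22 algorithm typically has $d_{BV}(\OO_{\psi'}^A)$ strictly smaller than $\OO'$, and one must exhibit an ``enlargement'' procedure terminating precisely at $\OO'$. Overcoming this requires a delicate combinatorial analysis, combining the weak-packet upper bound $\WF(\pi) \leq \OO'$, the structural description of Arthur packet intersections from \cite{Ato22} and \cite{HLL22}, and the CMO characterization of the basic packet (Theorem \ref{intro thm 1}), to show that compatible block transfers always allow $d_{BV}(\OO_\psi^A)$ to reach exactly $\OO'$ without overshooting and without losing the membership $\pi \in \Pi_\psi$. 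Proving the termination and existence of such a redistribution chain is the technical heart of the argument.
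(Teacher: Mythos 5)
Your part (b) is essentially the paper's argument (the paper applies Conjecture \ref{conj Jiang}(i) directly, and your verification that every $\psi\in\Psi(G_n)_{\lambda}$ is trivial on $W_F$ is a reasonable way to justify invoking the restricted hypothesis), but your part (a) has a genuine gap at both of its stages. In stage 1 you assert that $\pi$ is of Arthur type purely from the M{\oe}glin--Atobe construction and the algorithm of \cite{HLL22}, without ever using the hypothesis $\WF(\pi)\leq\OO'$. This cannot work: $\Pi(G_n)_{\lambda}$ genuinely contains representations that are not of Arthur type (see Example \ref{sec5 exam 1} and the example at the end of \S\ref{sec fiber L-par}), and the algorithms of \cite{Ato22, HLL22} only \emph{test} Arthur-typeness of a given representation; they do not show that every member of $\Pi_{\psi_0}^{\textrm{Weak}}$ passes the test. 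The only access to Arthur-typeness here is through the wavefront set hypothesis, and the paper converts it via Theorem \ref{thm computation of WF} (and \cite[Corollary 6.0.5]{CMO23}, which upgrades $\leq$ to $=$) into the statement $d_{BV}(\OO_{\phi_{\widehat{\pi}}})=\OO'$, i.e.\ $\phi_{\widehat{\pi}}\in (d_{BV})^{-1}_{\Phi(G_n)_{\lambda}}(d_{BV}(\OO_{\phi^0}))$ with $\phi^0=\phi_{\widehat{\psi}_0}$ tempered. The technical heart is then Proposition \ref{prop fiber tempered nilpotent orbit}: every $L$-parameter in that fiber is of Arthur type. This combinatorial result (proved via Lemma \ref{lemma nec}, Corollary \ref{cor fiber partition refinement}, Lemma \ref{lem key observation tempered} and Corollary \ref{cor special case A-type}) is entirely absent from your proposal.

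Your stage 2 is also not carried out --- you acknowledge the ``Jordan block redistribution'' procedure as an unproven technical heart --- and in fact it is the wrong mechanism. Once one knows $\phi_{\widehat{\pi}}=\phi_{\widehat{\psi}}$ for some $\psi\in\Psi(G_n)_{\lambda}$, there is nothing to redistribute: since $\OO_{\psi}^{A}=\OO_{\phi_{\widehat{\psi}}}$, one gets $d_{BV}(\OO_{\psi}^{A})=d_{BV}(\OO_{\phi_{\widehat{\pi}}})=\OO'$ automatically, so $\psi\in (d_{BV})^{-1}_{\Psi(G_n)_{\lambda}}(\OO')$ on the nose, with no risk of ``overshooting.'' The remaining point, which your proposal never addresses, is the membership $\pi\in\Pi_{\psi}$: it follows from $\widehat{\pi}\in\Pi_{\phi_{\widehat{\psi}}}\subseteq\Pi_{\widehat{\psi}}$ together with the compatibility $\Pi_{\widehat{\psi}}=\{\widehat{\pi}'\mid \pi'\in\Pi_{\psi}\}$ of the Aubert--Zelevinsky involution with local Arthur packets (\S\ref{sec AZ}). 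Without the fiber proposition and these two packet-theoretic inputs, the proposed argument for part (a) does not close.
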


We remark that Part (a) of above theorem proves that weak local Arthur packets $\Pi_{\psi_0}^{\textrm{Weak}}$ consist of unitary representations without any assumption. This proves \cite[Conejcture 3.1.3]{CMO22} (see Theorem \ref{thm CMO unitary}). We need to make the residue field characteristic assumption since we need to make use of the result in \cite[Theorem 1.4.1]{CMO23}. We also remark that the same proof works for inner forms of split group $\SO_{2n+1}(F)$, $\Sp_{2n}(F)$ or $\SO_{2n}(F)$ once Arthur's theory on the local Arthur packets is developed. Especially, in the proof, we need the results that $\Pi_{\phi_{\psi}} \subseteq \Pi_{\psi}$ and $\Pi_{\widehat{\psi}}= \{\widehat{\pi} \ | \ \pi \in \Pi_{\psi}\}$.

The following proposition plays a key role in the proof of Theorem \ref{thm main intro} Part (a).
For any local $L$-parameter $\phi$, we define $\OO_{\phi}$ to be the corresponding nilpotent orbit via restricting $\phi$ to the $\mathrm{SL
}_2(\mathbb{C})$.

\begin{prop}[Proposition \ref{prop fiber tempered nilpotent orbit}]\label{prop fiber L-par intro}
    Let $\lambda \in \Lambda(G_n)$. Suppose the unique open $L$-parameter $\phi^0$ in $\Phi(G_n)_{\lambda}$ is tempered and let $\OO'= d_{BV}(\OO_{\phi^0})$. Then any $L$-parameter in 
    $$ (d_{BV})^{-1}_{\Phi(G_n)_{\lambda}}(\OO'):=\{ \phi \in \Phi(G_n)_{\lambda} \ | \ d_{BV}(\OO_{\phi})=\OO' \}$$
    is of Arthur type.
\end{prop}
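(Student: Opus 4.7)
The plan is to reduce the statement to explicit combinatorics of multi-segments and then invoke the standard parametrization of local Arthur parameters of classical groups. First, since the open parameter $\phi^0$ is tempered, we may write $\phi^0 = \bigoplus_{i=1}^r \phi_i \otimes S_{a_i}$ with each $\phi_i(W_F)$ bounded. Consequently the infinitesimal parameter $\lambda$ is \emph{integral}, meaning it consists only of integer and half-integer shifts of the bounded characters $\phi_i$; in particular, every $\phi \in \Phi(G_n)_\lambda$ splits along the $\phi_i$-isotypic components of $\lambda$, with each isotypic block built from segments of the form $\phi_i |\cdot|^t \otimes S_c$ for $t \in \tfrac{1}{2}\Z$. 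This places the entire discussion in the ``good infinitesimal character'' setting where the M{\oe}glin--Xu--Atobe combinatorics apply cleanly.

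Next, I would translate the condition $d_{BV}(\OO_\phi) = d_{BV}(\OO_{\phi^0})$ into a combinatorial statement on partitions. The orbit $\OO_\phi$ has partition determined by the segment lengths of $\phi$, and since $\phi^0$ is the open parameter in its fiber, $\OO_{\phi^0}$ is the unique maximum among $\{\OO_\phi : \phi \in \Phi(G_n)_\lambda\}$. Using the explicit formulas for $d_{BV}$ on partitions of classical type together with its order-reversing property, the equality $d_{BV}(\OO_\phi) = d_{BV}(\OO_{\phi^0})$ forces the partition of $\OO_\phi$ to be a controlled rearrangement of that of $\OO_{\phi^0}$, lying in the same Lusztig--Spaltenstein special piece. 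This rearrangement can be described block by block in terms of the $\phi_i$-isotypic decomposition.

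Finally, for each such $\phi$, I would construct an Arthur parameter $\psi \in \Psi(G_n)_\lambda$ with $\phi_\psi = \phi$. Working block by block, the segments in each $\phi_i$-isotypic component can be grouped into ``Arthur rectangles'' $\phi_i \otimes S_a \otimes S_b$, and the combinatorial constraints derived in the previous step are precisely what is needed to make such a grouping possible. The main obstacle is to verify that the resulting grouping respects the self-duality conditions required for $\psi$ to be a valid local Arthur parameter on ${}^L G_n$ (in particular, the interplay between ``good-parity'' and ``bad-parity'' components of the $\phi_i$'s, and the symplectic/orthogonal signs compatible with the $L$-group), so that one recovers a genuine Arthur parameter rather than a mere formal segment grouping. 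I expect this to be handled using the M{\oe}glin--Xu--Atobe description of local Arthur parameters combined with the explicit formulas for the Barbasch--Vogan duality on partitions of classical type.
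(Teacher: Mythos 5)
Your outline follows the same general route as the paper (decompose along the $\rho_i$-isotypic blocks of $\lambda$, describe the fiber of $d_{BV}$ combinatorially, then regroup segments into Arthur rectangles), but the two steps that constitute the actual proof are asserted rather than carried out. First, the ``controlled rearrangement, described block by block'' is not automatic: one needs the precise description of the fiber of $d_{BV}$ on partitions (the staircase condition of Lemma \ref{lemma nec}, taken from \cite{LLS23}), and then a separate argument (Corollary \ref{cor fiber partition refinement} in the paper) showing that the resulting constraint $0\leq\sum_{z\leq t}p_z-\sum_{z\leq t}q_z\leq 1$ localizes to each isotypic block; this refinement is proved by an interleaving argument and is what lets one conclude, for non-self-dual $\rho_i$ or $\dim\rho_i>1$, that the block is forced to equal the tempered one (the multiplicity doubling makes the partial-sum defect even, hence zero). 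Your proposal does not notice this dichotomy, nor supply the refinement.

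Second, and more seriously, the claim that ``the combinatorial constraints derived in the previous step are precisely what is needed'' to group segments into Arthur rectangles is exactly the heart of the matter, and nothing in your sketch forces it. Knowing $\underline{p}(\phi)$ lies in the right special piece says nothing a priori about the unramified twists $\rho|\cdot|^z\otimes S_a$ occurring in $\phi$; the paper's key step (Lemma \ref{lem key observation tempered} and the induction in Corollary \ref{cor special case A-type}) uses temperedness of $\phi^0$ through the equality of infinitesimal parameters to force every displaced exponent to be $0$ or $\pm 1/2$, producing pairs $\rho|\cdot|^{\pm 1/2}\otimes S_a$ that assemble into $\rho\otimes S_{a}\otimes S_2$ (only $S_1$ and $S_2$ Arthur factors are ever needed). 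Your proposal uses temperedness only to get half-integrality of $\lambda$, which is not enough: the example at the end of \S\ref{sec fiber L-par} (for $\SO_{11}(F)$, with open parameter of Arthur type but non-tempered and $\lambda$ still half-integral) exhibits parameters in the same $d_{BV}$-fiber, with the same infinitesimal parameter, that are not of Arthur type. So an argument at the level of generality you describe would ``prove'' a false statement, and the missing content is precisely the mechanism by which temperedness of the open parameter pins down the exponents and the shape of the grouping. The concern you do raise (parity/self-duality of the rectangles) is, by contrast, the easy part once the construction above is in place.
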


The proof of this proposition is based on the explicit description of $d_{BV}^{-1}(\OO'):= \{\OO \ | \ d_{BV}(\OO)=\OO'\}$ studied in \cite{LLS23} (see Lemma \ref{lemma nec}). 
Proposition \ref{prop fiber L-par intro} has its own interests. Conjecture \ref{conj weak packet intro} may not imply analogous results in Proposition \ref{prop fiber L-par intro} for general groups.

We remark that if we replace the $\WF(\pi)$ by $d_{BV}(\OO_{\phi_{\widehat{\pi}}})$ in the definition of weak local Arthur packet \eqref{eq weak packet leq intro}, then we can generalize Theorem \ref{thm main intro} to anti-tempered local Arthur parameters, which is not necessarily basic.
More precisely, we have the following. Note that in this case we do not need the characteristic assumption of the residue field of $F$.

\begin{thm}[Theorem \ref{thm main 2}]
\label{thm main 2 intro}
Let $G_n$ be the split group $\SO_{2n+1}(F)$, $\Sp_{2n}(F)$ or $\SO_{2n}(F)$. For any anti-tempered local Arthur parameter $\psi_0$, we denote $\lambda:=\lambda_{\psi_0}$ and $\OO':= d_{BV}(\OO_{\psi_0}^{A})$. Consider the set of representations
\[ \Pi_{\psi_0}^{\textrm{Weak}}:= \{ \pi \in \Pi(G_n)_{\lambda} \ | \ d_{BV}(\OO_{\phi_{\widehat{\pi}}}) \leq d_{BV}(\OO_{\psi_0}^{A})\}.\]
We have an inclusion
\begin{align*}
    \Pi_{\psi_0}^{\textrm{Weak}} \subseteq \bigcup_{ \psi \in (d_{BV})_{\Psi(G_n)_{\lambda}}^{-1}(\OO')} \Pi_{\psi}.
\end{align*}
Moreover, the above inclusion is an equality if $\OO_{\phi_{\pi}} \geq \OO_{\phi_{\psi}}$ for any $\psi \in \Psi(G_n)_{\lambda}$ and $\pi \in \Pi_{\psi}$, which has already been verified for the split groups $\SO_{2n+1}(F)$ and $\Sp_{2n}(F)$ in \cite[Theorem 1.15, Corollary 4.12(2)]{HLLZ22}.
\end{thm}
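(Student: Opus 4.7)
The plan is to prove both containments by pulling back, via Aubert--Zelevinsky duality, to a statement about $L$-parameters that is handled by Proposition \ref{prop fiber L-par intro}. The starting observation is that because $\psi_0$ is anti-tempered, $\widehat{\psi_0}$ is tempered; its associated $L$-parameter $\phi^{0}:=\phi_{\widehat{\psi_0}}$ coincides with $\widehat{\psi_0}$ (with trivial Arthur $\SL_2$) and is the unique open element of $\Phi(G_n)_{\lambda}$. Since $\phi_{\psi'}|_{\SL_2^{D}}=\psi'|_{\SL_2^{D}}$ for any Arthur parameter $\psi'$, one has $\OO_{\phi_{\psi'}}=\OO_{\psi'}^{D}$; in particular $\OO_{\phi^{0}}=\OO_{\widehat{\psi_0}}^{D}=\OO_{\psi_0}^{A}$, so $d_{BV}(\OO_{\phi^{0}})=\OO'$.

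For the inclusion $\Pi_{\psi_0}^{\textrm{Weak}}\subseteq \bigcup_{\psi}\Pi_{\psi}$, take $\pi\in\Pi_{\psi_0}^{\textrm{Weak}}$. Because Aubert duality preserves the infinitesimal parameter, $\phi_{\widehat{\pi}}\in\Phi(G_n)_{\lambda}$, and so $\OO_{\phi_{\widehat{\pi}}}\leq\OO_{\phi^{0}}$ as $\phi^{0}$ is the open parameter in that fibre. Applying the order-reversing Barbasch--Vogan duality $d_{BV}$ gives $d_{BV}(\OO_{\phi_{\widehat{\pi}}})\geq\OO'$, and combining with the defining hypothesis $d_{BV}(\OO_{\phi_{\widehat{\pi}}})\leq\OO'$ forces equality $d_{BV}(\OO_{\phi_{\widehat{\pi}}})=\OO'$. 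Proposition \ref{prop fiber L-par intro} then yields $\psi'\in\Psi(G_n)_{\lambda}$ with $\phi_{\psi'}=\phi_{\widehat{\pi}}$; hence $\widehat{\pi}\in\Pi_{\phi_{\psi'}}\subseteq\Pi_{\psi'}$ and $\pi\in\Pi_{\widehat{\psi'}}$. Setting $\psi:=\widehat{\psi'}$, the chain $\OO_{\psi}^{A}=\OO_{\psi'}^{D}=\OO_{\phi_{\widehat{\pi}}}$ shows $d_{BV}(\OO_{\psi}^{A})=\OO'$, placing $\psi$ in the required fibre.

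For the reverse inclusion, assume the condition $\OO_{\phi_{\pi}}\geq\OO_{\phi_{\psi}}$ for all $\psi\in\Psi(G_n)_{\lambda}$ and $\pi\in\Pi_{\psi}$. Given such $\pi$ with $d_{BV}(\OO_{\psi}^{A})=\OO'$, apply the hypothesis to $\widehat{\pi}\in\Pi_{\widehat{\psi}}$: $\OO_{\phi_{\widehat{\pi}}}\geq\OO_{\phi_{\widehat{\psi}}}=\OO_{\widehat{\psi}}^{D}=\OO_{\psi}^{A}$. One more application of the order-reversing $d_{BV}$ yields $d_{BV}(\OO_{\phi_{\widehat{\pi}}})\leq\OO'$, so $\pi\in\Pi_{\psi_0}^{\textrm{Weak}}$. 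The final claim for split $\SO_{2n+1}(F)$ and $\Sp_{2n}(F)$ is then immediate from \cite[Theorem 1.15, Corollary 4.12(2)]{HLLZ22}.

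The main obstacle is less the logic than assembling the ambient identifications cleanly: that Aubert--Zelevinsky duality preserves $\lambda$, that the tempered parameter $\phi^{0}$ really is the unique open element of $\Phi(G_n)_{\lambda}$, that $d_{BV}$ is order-reversing on the relevant fibres, and that the orbit identities $\OO_{\phi_{\psi}}=\OO_{\psi}^{D}$ and $\OO_{\widehat{\psi}}^{D}=\OO_{\psi}^{A}$ hold. Once these are in place the argument is the short sandwich above followed by a single invocation of Proposition \ref{prop fiber L-par intro}; notably, no wavefront set computation or appeal to Theorem \ref{intro thm 1} is made, which explains why the residue characteristic assumption is not needed here.
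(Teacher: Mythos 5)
Your proof is correct and follows essentially the same route as the paper: the forward inclusion repeats the argument of Theorem \ref{thm main}(a) (pass to $\widehat{\pi}$, force $d_{BV}(\OO_{\phi_{\widehat{\pi}}})=\OO'$, apply Proposition \ref{prop fiber tempered nilpotent orbit} to conclude $\phi_{\widehat{\pi}}$ is of Arthur type, then use $\Pi_{\phi_{\widehat{\psi}}}\subseteq\Pi_{\widehat{\psi}}$ and Aubert-duality compatibility of packets), and the reverse inclusion is the same order-reversal of $d_{BV}$ applied to $\OO_{\phi_{\widehat{\pi}}}\geq\OO_{\phi_{\widehat{\psi}}}$. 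Your explicit sandwich argument (openness of $\phi^{0}=\phi_{\widehat{\psi_0}}$ plus order-reversal of $d_{BV}$) upgrading the defining inequality to the equality $d_{BV}(\OO_{\phi_{\widehat{\pi}}})=\OO'$ is precisely the step the paper leaves implicit when it says the proof is "the same as Theorem \ref{thm main}(a) without Theorem \ref{thm computation of WF}," so it is a welcome clarification rather than a deviation.
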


Next, we discuss the generalization of the weak local Arthur packets beyond the basic local Arthur parameters. 
Let $\mathrm{G}$ be a connected reductive group and $G=\mathrm{G}(F)$. 
Assume that there is a local Arthur packets theory for $G$ as conjectured in \cite[Conjecture 6.1]{Art89}.
A first generalization would be as follows
\begin{equation}\label{intro equ 2 psi}
    \Pi_{\psi}^{\textrm{WF}}:= \{ \pi \in \Pi(G)_{\lambda}\ | \ \WF(\pi) \leq d_{BV}(\OO_{\psi}^A) \},
\end{equation}
where $\psi$ is any local Arthur parameter and $\lambda=\lambda_{\psi}$. 
Or, more generally, 
\begin{equation}\label{intro equ 2}
\Pi_{\OO',\lambda}^{\textrm{WF}}:=  \{ \pi \in \Pi(G)_{\lambda} \ | \ \WF(\pi) \leq \OO'  \},
\end{equation}
for any nilpotent orbit $\OO'$ of $\RG(\BC)$ and any infinitesimal parameter $\lambda$ of $G$. However, 
these sets may not always be a union of local Arthur packets since they may contain representations not of Arthur type, see Example \ref{sec5 exam 1}. A natural modification of
\eqref{intro equ 2 psi} or \eqref{intro equ 2} is to add the condition of Arthur type as follows
\begin{align}\label{eq generalization leq A type 1 intro}
     \Pi_{\psi}^{\textrm{WF,A}}:=  \{ \pi \in \Pi(G)_{\lambda} \text{ of Arthur type} \ | \ \WF(\pi) \leq d_{BV}(\OO_{\psi}^A)  \},
\end{align}
where $\psi$ is any local Arthur parameter and $\lambda=\lambda_{\psi}$, 
or, more generally, 
\begin{align}\label{eq generalization leq A type 2 intro}
    \Pi_{\OO',\lambda}^{\textrm{WF,A}} :=  \{ \pi \in \Pi(G)_{\lambda} \text{ of Arthur type} \ | \ \WF(\pi) \leq \OO'  \},
\end{align}
for any nilpotent orbit $\OO'$ of $\RG(\BC)$ and any infinitesimal parameter $\lambda$ of $G$.
However, these weak local Arthur packets may still not be a union of local Arthur packets, see Example \ref{sec5 exam 2}.

Now, let us focus on the set $\Pi_{\OO',\lambda}^{\textrm{WF,A}}$. Taking Conjecture \ref{conj Jiang} into consideration, assuming Conjecture \ref{conj Jiang}(i), we have
\begin{align}\label{eq containment generalization leq A type intro}
     \Pi_{\OO',\lambda}^{\textrm{WF,A}} \supseteq \bigcup_{ \substack{\psi \in \Psi(G_n)_{\lambda},\  d_{BV}(\OO_{\psi}^A) \leq \OO'} } \Pi_{\psi}.
\end{align}
Moreover, assuming Conjecture \ref{conj Jiang}(ii), the right hand side of \eqref{eq containment generalization leq A type intro} is exactly the union of all local Arthur packets contained in $ \Pi_{\OO',\lambda}^{\textrm{WF,A}}$. More precisely, we have the following

\begin{prop}[Proposition \ref{prop generalization}]
\label{prop generalization intro}
Let $\mathrm{G}$ be a connected reductive group and $G=\mathrm{G}(F)$. 
Assume that there is a local Arthur packets theory for $G$ as conjectured in \cite[Conjecture 6.1]{Art89}.
    Assume Conjecture \ref{conj Jiang} holds for the group $G=\RG(F)$. For any nilpotent orbit $\OO'$ of $\RG(\BC)$ and any infinitesimal parameter $\lambda$ of $G$, we define the weak local Arthur packet as follows
    \begin{align*}
        \Pi_{\OO',\lambda}^{\textrm{Weak}}:=\{\pi \in \Pi(G)_{\lambda} \textrm{ of Arthur type}\ | \ \exists \psi \in \Psi(\pi) \text{ such that }d_{BV}(\OO_{\psi}^{A}) \leq \OO'\},
    \end{align*}
    where $\Psi(\pi):=\{\psi \in \Psi(G)\ | \ \pi \in \Pi_{\psi}\}.$
    Then 
    \[\Pi_{\OO',\lambda}^{\textrm{Weak}}=   \bigcup_{\psi \in \Psi(G)_{\lambda},\ \Pi_{\psi}\subseteq \Pi_{\OO',\lambda}^{\textrm{WF,A}} } \Pi_{\psi} \ \subseteq\  \Pi_{\OO',\lambda}^{\textrm{WF,A}}, \]
    where the containment can be strict.
\end{prop}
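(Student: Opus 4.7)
The plan is to prove the first equality by two inclusions and note that the second containment is tautological; all nontrivial input comes from the assumed Conjecture \ref{conj Jiang}. The rightmost containment follows directly from the definition of the union: every $\Pi_\psi$ indexing the union is by hypothesis contained in $\Pi_{\OO',\lambda}^{\textrm{WF,A}}$, so their union is as well.

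For the direction $\supseteq$ of the middle equality, I would take $\psi \in \Psi(G)_\lambda$ with $\Pi_\psi \subseteq \Pi_{\OO',\lambda}^{\textrm{WF,A}}$ and invoke Conjecture \ref{conj Jiang}(ii) to produce a representative $\pi' \in \Pi_\psi$ with $d_{BV}(\OO_\psi^A) \in \WF(\pi')$. Since $\pi'$ lies in $\Pi_{\OO',\lambda}^{\textrm{WF,A}}$, every orbit in $\WF(\pi')$ is $\leq \OO'$; in particular $d_{BV}(\OO_\psi^A) \leq \OO'$. Any $\pi \in \Pi_\psi$ is then of Arthur type and $\psi$ itself is the witness placing $\pi$ in $\Pi_{\OO',\lambda}^{\textrm{Weak}}$.

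For the direction $\subseteq$, let $\pi \in \Pi_{\OO',\lambda}^{\textrm{Weak}}$, and choose a witness $\psi \in \Psi(\pi)$ with $d_{BV}(\OO_\psi^A) \leq \OO'$. Since $\pi \in \Pi_\psi$ and $\pi$ has infinitesimal parameter $\lambda$, we have $\lambda_\psi = \lambda$, and all members of $\Pi_\psi$ share this infinitesimal parameter. Applying Conjecture \ref{conj Jiang}(i) to $\psi$, each $\pi' \in \Pi_\psi$ satisfies $\WF(\pi') \leq d_{BV}(\OO_\psi^A) \leq \OO'$, so $\Pi_\psi \subseteq \Pi_{\OO',\lambda}^{\textrm{WF,A}}$ and $\pi$ belongs to the union. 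The strictness of the outer containment $\Pi_{\OO',\lambda}^{\textrm{Weak}} \subsetneq \Pi_{\OO',\lambda}^{\textrm{WF,A}}$ is exhibited by Example \ref{sec5 exam 2}, which provides a representation of Arthur type with $\WF(\pi)\leq \OO'$ while every local Arthur packet containing it has some co-member whose wavefront set contains an orbit not $\leq \OO'$.

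The argument is straightforward once Conjecture \ref{conj Jiang} is available; the main conceptual point is that \emph{both} halves of Jiang's conjecture are used in an essential way. Part (i) is what forces every candidate $\Pi_\psi$ into $\Pi_{\OO',\lambda}^{\textrm{WF,A}}$, while part (ii) is precisely what converts the set-theoretic hypothesis $\Pi_\psi \subseteq \Pi_{\OO',\lambda}^{\textrm{WF,A}}$ into the numerical bound $d_{BV}(\OO_\psi^A) \leq \OO'$ needed to place members of $\Pi_\psi$ in $\Pi_{\OO',\lambda}^{\textrm{Weak}}$; without the existence statement of part (ii), an upper bound on each wavefront set alone would not pin down $d_{BV}(\OO_\psi^A)$ sharply enough for the equality to hold.
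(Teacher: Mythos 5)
Your proof is correct and follows essentially the same route as the paper: Conjecture \ref{conj Jiang}(i) gives the inclusion $\Pi_{\OO',\lambda}^{\textrm{Weak}}\subseteq\bigcup\Pi_\psi$ by bounding $\WF(\pi')$ for all co-members of the witnessing packet, while Conjecture \ref{conj Jiang}(ii) converts $\Pi_\psi\subseteq\Pi_{\OO',\lambda}^{\textrm{WF,A}}$ into the bound $d_{BV}(\OO_\psi^A)\leq\OO'$ for the reverse inclusion, with strictness coming from Example \ref{sec5 exam 2}. Your handling of part (ii) via $d_{BV}(\OO_\psi^A)\in\WF(\pi')$ is, if anything, slightly more faithful to the stated form of the conjecture than the paper's phrasing.
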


The set $\Pi_{\OO',\lambda}^{\textrm{Weak}}$ can be regarded as a natural generalization of $\Pi_{\psi_0}^{\textrm{Weak}}$ and reveals the implications of Conjecture \ref{conj Jiang}.
Indeed, assuming Conjecture 
\ref{conj Jiang}, one can see from Proposition \ref{prop generalization intro}
that 
$\Pi_{\OO',\lambda}^{\textrm{Weak}}$ is 
the maximal subset of $\Pi(G)_{\lambda}$ with the following properties.
\begin{enumerate}
    \item [$\oldbullet$]  $\Pi_{\OO',\lambda}^{\textrm{Weak}} \subseteq \{ \pi \in \Pi(G)_{\lambda}\ | \ \WF(\pi) \leq \OO' \}$.
    \item [$\oldbullet$] $\Pi_{\OO',\lambda}^{\textrm{Weak}}$ is a union of local Arthur packets.
\end{enumerate} 
Hence, if Conjecture \ref{conj weak packet intro} holds for a basic local Arthur parameter $\psi_0$ of $G$, then
\[ \Pi_{\psi_0}^{\textrm{Weak}}=\Pi_{\OO',\lambda}^{\textrm{Weak}},\]
where $\OO'= d_{BV}(\OO_{\psi_0}^A)$ and $\lambda=\lambda_{\psi_0}$.

Following is the structure of the paper. In \S 
\ref{sec: preliminaries}, we introduce necessary preliminaries. In \S \ref{sec: fibers}, we  
recall certain results from \cite{LLS23} on the fibers of the Barbasch-Vogan duality and prove the key Proposition \ref{prop fiber L-par intro}. In \S \ref{sec: main thm}, we prove our main result Theorem \ref{thm main intro} on the weak local Arthur packets Conjecture \ref{conj weak packet intro} and its variant Theorem \ref{thm main 2 intro}. In \S \ref{sec: generalization}, we generalize the definition of weak local Arthur packets and prove Proposition \ref{prop generalization intro}.

\subsection*{Acknowledgements} 
The authors would like to thank Professor Dihua Jiang and Professor Freydoon Shahidi for their interests and constant support. The authors also would like to thank Cheng-Chiang Tsai for helpful communication. Part of this work was done when the second-named author was visiting the National Center for Theoretical Science, for which he thanks for their hospitality.


\section{Preliminaries}\label{sec: preliminaries}

\subsection{\texorpdfstring{Partitions and nilpotent orbits of $\mathfrak{g}_n(\BC)$}{}}
In this subsection, we recall the basic notation for partitions and the correspondence between nilpotent orbits of $\mathfrak{g}_n(\BC)$ and partitions.

First, we denote the set of partitions of $n$ by $\mathcal{P}(n)$. We express a partition $\underline{p}\in \mathcal{P}(n)$ in one of the following forms.
\begin{enumerate}
    \item [(i)] $\underline{p}=[p_1,\dots, p_N]$, such that $p_i$'s are non-increasing and $\sum_{i=1}^N p_i= n$. We denote the the \emph{length} of $\underline{p}$ by $l(\underline{p})= |\{1 \leq i \leq N \ | \ p_i >0\}|$.
    \item [(ii)] $\underline{p}=[p_1^{r_1},\dots, p_N^{r_N}]$, such that $p_i$'s are decreasing and $\sum_{i=1}^N r_i p_i=n$. We assume $r_i>0$ unless specified.
\end{enumerate}
Also, we denote $|\underline{p}|=n$ if $\underline{p} \in \mathcal{P}(n)$.

Next, we recall the definitions for partitions of type $B$, $C$ and $D$.
\begin{defn}
For $\epsilon\in \{ \pm 1\}$, we define
\[ \mathcal{P}_{\epsilon}(n)= \{ [p_1^{r_1},\dots, p_N^{r_N}]\in \mathcal{P}(n) \ | \ r_i \text{ is even for all }p_i \text{ with } (-1)^{p_i}=\epsilon \}. \]
Then we say
\begin{enumerate}
    \item $\underline{p}\in \mathcal{P}(n)$ is of type $B$ if $n$ is odd and $\underline{p}\in \mathcal{P}_{1}(n)$.
    \item $\underline{p}\in \mathcal{P}(n)$ is of type $C$ if $n$ is even and $\underline{p}\in \mathcal{P}_{-1}(n)$.
    \item $\underline{p}\in \mathcal{P}(n)$ is of type $D$ if $n$ is even and $\underline{p}\in \mathcal{P}_{1}(n)$.
\end{enumerate}
We denote $\mathcal{P}_X(n)$ the set of partitions of $n$ of type $X$.
\end{defn}

Denote the set of nilpotent orbits of $\mathfrak{so}_{2n+1}(\BC)$, $\mathfrak{sp}_{2n}(\BC)$ and $\mathfrak{so}_{2n}(\BC)$ by $\mathcal{N}_B(2n+1),$ $\mathcal{N}_C(2n)$ and $\mathcal{N}_D(2n)$ respectively. Also, we denote
 \[ \mathcal{N}_B =\bigcup_{n \geq 0} \mathcal{N}_{B}(2n+1),\ \mathcal{N}_C =\bigcup_{n \geq 0} \mathcal{N}_{C}(2n),\ \mathcal{N}_D =\bigcup_{n \geq 0} \mathcal{N}_{D}(2n).  \]
For $(X,N)\in \{ (B,2n+1 ),(C,2n ), (D,2n ) \}$, there is a surjection
\begin{align*}
    \mathcal{N}_X(N) & \longrightarrow \mathcal{P}_{X}(N)\\
    \OO& \longmapsto \underline{p}_{\OO}.
\end{align*}
The fiber of $\underline{p}= [p_1^{m_1},\dots,p_r^{m_r}] \in \mathcal{P}_X(N)$ under this map is a singleton, which we denote by $\{\OO_{\underline{p}}\}$, except when $\underline{p}$ is ``very even"; i.e., $\underline{p}$ is of type $D$ and $p_i$'s are all even. When $\underline{p}$ is very even, the fiber consists of two nilpotent orbits, which we denote by $\OO_{\underline{p}}^{I}$ and $\OO_{\underline{p}}^{II}$.

The surjection $\OO \mapsto \underline{p}_{\OO}$ carries the closure ordering on $\mathcal{N}_X(N)$ to the dominance ordering on $\mathcal{P}_X(N)$ in the sense that $\OO > \OO'$ if and only if $\underline{p}_{\OO}> \underline{p}_{\OO'}$. Note that when $\underline{p}$ is very even, $\OO_{\underline{p}}^{I}$ and $\OO_{\underline{p}}^{II}$ are not comparable.

\subsection{Barbasch-Vogan duality}\label{sec BV dual}
In this subsection, following \cite{Spa82, BV85, Lus84, Ach03}, we introduce several operations on the set of partitions, and then use them to describe the definition of the Barbasch-Vogan duality on the level of partitions and nilpotent orbits.

First, we need the following operations to construct or decompose partitions.
\begin{defn} Suppose $\underline{p} \in \mathcal{P}(n_1)$ and $\underline{q} \in \mathcal{P}(n_2)$.
\begin{enumerate}
    \item [(i)] Write $\underline{p}=[p_1^{r_1},\dots, p_N^{r_N}]$ and $\underline{q}=[p_1^{s_1},\dots, p_N^{s_N}]$, where we allow $r_i=0$ or $s_i=0$. Then we define
    \[ \underline{p}\sqcup \underline{q}= [p_1^{r_1+s_1},\dots, p_N^{r_N+s_N}]. \]
    \item [(ii)] Write $\underline{p}=[p_1,\dots, p_N]$, we define 
\begin{align*}
    \underline{p}^+&= [p_1+1,p_2,\dots, p_N] \in \mathcal{P}(n_1+1),\\
        \underline{p}^-&= [p_1,\dots, p_{N-1}, p_N-1] \in \mathcal{P}(n_1-1).
\end{align*}
\end{enumerate}
\end{defn}

We recall the definition of transpose (or conjugation) of partitions.
\begin{defn}\label{def transpose}
For $\underline{p}=[p_1,\dots, p_N]\in \mathcal{P}(n)$, we define $\underline{p}^{\ast}=[p_1^{\ast}, \dots, p_{N'}^{\ast}]\in \mathcal{P}(n)$ by
\[ p_i^{\ast}=|\{ j \ | \ p_j \geq i   \}|. \]
\end{defn}

Next, we recall the definition of collapse. Let $n$ be a positive integer and let $X=B$ if $n$ is odd and $X\in \{C,D\}$ if $n$ is even. For any $\underline{p}\in \mathcal{P}(n)$, there exists a unique maximal partition $\underline{p}_X\in \mathcal{P}(n)$ of type $X$ such that $\underline{p}_X \leq \underline{p}$. We denote $\underline{p}_X$ the \emph{$X$-collapse} of $\underline{p}$.

 Now we recall the definition of Barbasch-Vogan duality for partitions of type $X$. Following the notation in \cite{Ach03}, we shall omit the parentheses between the superscript and subscript. For example, we shall write $\underline{p}_{D} {}^{+} \underline{\vphantom{p}}_{B}  {}^{-\ast}$ instead of $((((\underline{p}_{D})^{+})_{B})^{-})^{\ast}$.

\begin{defn}

\begin{enumerate}
    \item [(i)]For $\underline{p}\in \mathcal{P}(2n+1)$ of type $B$, we define $d_{BV}(\underline{p}):= \underline{p}^{-}\underline{\vphantom{p}}_C{}^{\ast}$, which is in $\mathcal{P}(2n)$ of type $C$.
    \item [(ii)]For $\underline{p}\in \mathcal{P}(2n)$ of type $C$, we define $d_{BV}(\underline{p}):= \underline{p}^{+}\underline{\vphantom{p}}_B{}^{\ast}$, which is in $\mathcal{P}(2n+1)$ of type $B$.
    \item [(iii)] For $\underline{p}\in \mathcal{P}(2n)$ of type $D$, we define $d_{BV}(\underline{p}):= \underline{p}^{\ast}\underline{\vphantom{p}}_D$, which is in $\mathcal{P}(2n)$ of type $D$.
\end{enumerate}
\end{defn}

Finally, we recall the extension of the Barbasch-Vogan duality to the level of nilpotent orbits. If $\underline{p} \in \mathcal{P}_{D}(2n)$ is very even, then we define
\[d_{BV}(\OO_{\underline{p}}^I)=\begin{cases}
    \OO_{\underline{p}}^I& \text{ if }n \text{ is even,}\\
    \OO_{\underline{p}}^{II}& \text{ if }n \text{ is odd.}
\end{cases}\]
Otherwise, we define $d_{BV}(\OO_{\underline{p}})=\OO_{d_{BV}(\underline{p})} $. See \cite[Corollary 6.3.5]{CM93}. We say a nilpotent orbit or a partition is \emph{special} if it is in the image of the Barbasch-Vogan duality.

\subsection{Definition of parameters}\label{sec par}
In this subsection, we give the definition of $L$-parameters, local Arthur parameters and infinitesimal parameters of $G_n$ and related notations. We also allow $G_n$ to be $\GL_n(F)$ in this subsection.

First, we use the following definition of $L$-parameters and local Arthur parameters. 
\begin{defn}\label{def L-par}
  An $L$-parameter $[\phi]$ of $G_n$ is a $\widehat{G}_n(\BC)$-conjugacy class of an admissible homomorphism
 \[ \phi: W_F \times \SL_2(\BC) \to \widehat{G}_n(\BC).\]
 That is, $\phi$ is continuous, and
 \begin{enumerate}

\item the restriction of $\phi$ to $W_F$ consists of semi-simple elements;

\item the restriction of $\phi$ to $\SL_2(\BC)$ is a morphism of complex algebraic groups;

 \item if the image of $\phi$ is contained in the Levi subgroup of some parabolic subgroup $P$ of ${}^L G$, then $P$ is relevant for $G$ (see \cite[8.2(ii)]{Bor79} for notation).

\end{enumerate}
By abuse of notation, we don't distinguish $[\phi]$ and $\phi$.
\end{defn}

\begin{defn}\label{def A-par}
    A local Arthur parameter $[\psi]$ of $G_n$ is a $\widehat{G}(\BC)$-conjugacy class of a homomorphism
    \[ \psi: W_F \times \SL_2^D(\BC) \times \SL_2^A(\BC) \to \widehat{G}_n(\BC), \]
    such that
     \begin{enumerate}
    \item $\psi|_{W_F \times \SL_2(\BC)^D}$ is an $L$-parameter;
    \item the restriction of $\psi$ to $\SL_2^A(\BC)$ is a morphism of complex algebraic groups;
    \item $\psi|_{W_F}$ has bounded image.
\end{enumerate}
By abuse of notation, we don't distinguish $[\psi]$ and $\psi$. 
\end{defn}
For each local Arthur parameter $\psi \in \Psi(G_n)$, we may define another local Arthur parameter $\widehat{\psi}$ by swapping $\SL_2^{D}(\BC)$ and $\SL_2^{A}(\BC)$. Namely, the morphism $\widehat{\psi}$ is given by
\begin{align}\label{eq psi hat}
    \widehat{\psi}(w,x,y):= \psi(w,y,x).
\end{align}

For $w \in W_F$, we denote $d_{w}=\textrm{diag}(|w|^{1/2}, |w|^{-1/2}) \in \SL_2(\BC)$. Then for each $\psi \in \Psi(G_n)$, we may associate an $L$-parameter $\phi_{\psi}$ by
\[ \phi_{\psi}(w,x):= \psi(w, x, d_w).\]
It is proved in \cite{Art13} that the map $\psi \mapsto \phi_{\psi}$ is an injection. We say an $L$-parameter $\phi \in \Phi(G_n)$ is of \emph{Arthur type} if $\phi=\phi_{\psi}$ for some $\psi \in \Psi(G_n)$.

In \cite{Art13}, for each $L$-parameter $\phi$ (resp. local Arthur parameter $\psi$) of $G_n$, he constructed a finite set $\Pi_{\phi}$ (resp. $\Pi_{\psi}$) of $\Pi(G_n)$, called the $L$-packet (resp. local Arthur packet) of $\phi$ (resp. $\psi$). It is shown in \cite{Art13} that the local Arthur packet $\Pi_{\psi}$ contains the associated $L$-packet $\Pi_{\phi_{\psi}}$.

Similar to the assignment $\psi \mapsto \phi_{\psi}$, for each $L$-parameter $\phi$, we may associate a morphism $\lambda_{\phi}$ from $W_F$ to $ \widehat{G}_n(\BC)$ by
\[ \lambda_{\phi}(w):= \phi(w, d_w). \]
For $\psi \in \Psi(G_n)$, we shall denote $\lambda_{\psi}:= \lambda_{\phi_{\psi}}$ for short. This gives an \emph{infinitesimal parameter} of $G_n$ in the following sense.
\begin{defn}
    An infinitesimal parameter $[\lambda]$ of $G_n$ is a $\widehat{G}_n(\BC)$-conjugacy class of a continuous homomorphism
    \[ \lambda: W_F \to \widehat{G}_n, \]
    whose image consists of semi-simple elements. By abuse of notation, we don't distinguish $[\lambda]$ and $\lambda$. We denote $\Lambda(G_n)$ the set of infinitesimal parameters of $G_n$.
\end{defn}
It is shown in \cite{Moe06b, Moe09} that for any $\pi \in \Pi_{\psi}$, we have $\lambda_{\phi_{\pi}}= \lambda_{\psi}$.

 For each $L$-parameter $\phi $ and each local Arthur parameter $\psi$ of $G_n$, we associate nilpotent orbits $\OO_{\phi}$, $\OO_{\psi}^D$ and $\OO_{\psi}^{A}$ and partitions 
$\underline{p}(\phi)$, $\underline{p}^{D}(\psi)$ and $\underline{p}^A(\psi)$ as follows. 

\begin{defn}\label{def orbit}
    For $\phi \in \Phi(G_n)$ and $\psi \in \Psi(G_n)$, we define $\OO_{\phi}$ (resp. $\OO_{\psi}^D$, $\OO_{\psi}^{A}$) to be the nilpotent orbit of  $\widehat{\mathfrak{g}}_n(\BC)$ containing the element
    \[ d(\phi|_{\SL_2}) \left(\begin{pmatrix}
    0 &1 \\0 &0
\end{pmatrix}\right)\  \left( \text{resp. } d(\psi|_{\SL_2^{D}}) \left(\begin{pmatrix}
    0 &1 \\0 &0
\end{pmatrix}\right),\ d(\psi|_{\SL_2^{A}}) \left(\begin{pmatrix}
    0 &1 \\0 &0
\end{pmatrix}\right)  \right), \]
and define the partition $\underline{p}(\phi):= \underline{p}_{\OO_{\phi}}$ (resp. $\underline{p}^{D}(\psi):= \underline{p}_{\OO_{\psi}^D}$, $\underline{p}^A(\psi):= \underline{p}_{\OO_{\psi}^D}$). Note that $\OO_{\widehat{\psi}}^A= \OO_{\psi}^D= \OO_{\phi_{\psi}}$ and $\underline{p}^A(\widehat{\psi})= \underline{p}^D(\psi)= \underline{p}( \phi_{\psi})$.
\end{defn}

Fix a $\lambda \in \Lambda(G_n)$. There is a natural partial ordering $\geq_{C}$ on the set $\Phi(G_n)_{\lambda}$, which is induced from the closure ordering on the associated Vogan variety. See \cite[Definition 1.10]{HLLZ22} for details. In \cite[Corollary 4.12 (2)]{HLLZ22}, we show that $\phi_1 \geq_C \phi_2$ implies that $\underline{p}(\phi_1) \geq \underline{p}(\phi_2)$ when $G_n=\SO_{2n+1}(F)$ or $\Sp_{2n}(F)$. Indeed, the same proof for $\SO_{2n}(F)$ shows that $ \phi_1>_C \phi_2$ implies that $\underline{p}(\phi_1) > \underline{p}(\phi_2)$. We recall the following.

\begin{prop}\label{prop open par}
    Let $\lambda \in \Lambda(G_n)$. The following holds.
    \begin{enumerate}
        \item [(a)] There exist unique $\phi^0, \phi_0 \in \Phi(G_n)_{\lambda}$ such that for any $\phi\in \Phi(G_n)_{\lambda}$, the inequality holds
        \[ \underline{p}(\phi^0) \geq \underline{p}(\phi) \geq \underline{p}(\phi_0). \]
        We call $\phi^{0}$ (resp. $\phi_0$) the open (resp. closed) $L$-parameter of $\Phi(G_n)_{\lambda}$ .
        \item [(b)] $\Phi(G_n)_{\lambda}$ contains an $L$-parameter of Arthur type if and only if $\phi^0$ is tempered. Let $\psi^{0}=\phi^0 \otimes S_1$ so that $\phi^0= \phi_{\psi^0}$. Then $\phi_0$ is also of Arthur type with $\phi_0= \phi_{\widehat{\psi}^0}$.
    \end{enumerate}
\end{prop}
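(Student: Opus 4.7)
The plan is to exploit the parametrization of $\Phi(G_n)_\lambda$ by orbits on the Vogan variety, together with the closure-ordering-to-partition-dominance implication recorded just above the proposition, and then to verify the Arthur-type assertions by direct computation.

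For part (a), recall from the discussion of $\geq_C$ that $\Phi(G_n)_\lambda$ is in natural bijection with the $Z_{\widehat{G}_n(\BC)}(\lambda)$-orbits on the associated Vogan variety $V_\lambda \subseteq \widehat{\mathfrak{g}}_n(\BC)$, under which $\geq_C$ corresponds to the closure order on orbits. Since $V_\lambda$ is a complex vector space carrying a linear action of a complex reductive group with only finitely many orbits, it admits a unique open orbit and $\{0\}$ is its unique closed orbit. Let $\phi^0$ and $\phi_0$ denote the corresponding $L$-parameters. Then $\phi^0 \geq_C \phi \geq_C \phi_0$ for every $\phi \in \Phi(G_n)_\lambda$, and the partition inequalities $\underline{p}(\phi^0) \geq \underline{p}(\phi) \geq \underline{p}(\phi_0)$ follow from \cite[Corollary 4.12 (2)]{HLLZ22} together with its $\SO_{2n}(F)$-analogue noted just above the statement. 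Uniqueness of $\phi^0$ and $\phi_0$ inside $\Phi(G_n)_\lambda$ is inherited from the uniqueness of the open and closed orbits.

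For part (b), the direction $(\Leftarrow)$ is formal: if $\phi^0$ is tempered, then $\phi^0|_{W_F}$ is bounded, so $\psi^0 := \phi^0 \otimes S_1$ satisfies the conditions of Definition \ref{def A-par}, and $\phi_{\psi^0}(w,x) = \psi^0(w,x,d_w) = \phi^0(w,x)$ identifies $\phi^0$ as being of Arthur type. For $(\Rightarrow)$, suppose $\phi_\psi \in \Phi(G_n)_\lambda$ for some $\psi = \bigoplus_i \phi_i \otimes S_{m_i} \otimes S_{n_i}$. Applying the Clebsch--Gordan decomposition $S_{m_i} \otimes S_{n_i} \cong \bigoplus_k S_k$ produces a tempered $L$-parameter $\phi^T := \bigoplus_i \bigoplus_k \phi_i \otimes S_k$ (tempered because each $\phi_i$ has bounded image), and evaluating at $d_w$ shows $\lambda_{\phi^T} = \lambda_\psi = \lambda$, so $\phi^T \in \Phi(G_n)_\lambda$. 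It remains to deduce that $\phi^0$ is tempered. This is a segment-combinatorial check in $V_\lambda$: the existence of a centered-segment decomposition of the multiset of $W_F$-characters of $\lambda$ forces every maximal chain of characters to be symmetric about $|\cdot|^0$, and the longest-segment decomposition of such symmetric chains (which is the one parametrizing the open orbit, and hence $\phi^0$) is again centered and therefore tempered.

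For the identity $\phi_0 = \phi_{\widehat{\psi}^0}$, using $\widehat{\psi}^0(w,x,y) = \psi^0(w,y,x)$ and the triviality of $\psi^0$ on $\SL_2^A$, one computes $\phi_{\widehat{\psi}^0}(w,x) = \widehat{\psi}^0(w,x,d_w) = \psi^0(w,d_w,x) = \phi^0(w,d_w) = \lambda(w)$, independent of $x$. Thus $\phi_{\widehat{\psi}^0}|_{\SL_2(\BC)}$ is trivial and $\phi_{\widehat{\psi}^0}|_{W_F} = \lambda$, which is exactly the $L$-parameter attached to $0 \in V_\lambda$, namely $\phi_0$. The main technical obstacle I anticipate is the segment-combinatorial step in part (b), where one must carefully show that the symmetry imposed by temperedness of any $\phi^T \in \Phi(G_n)_\lambda$ is preserved under passing to the longest (open-orbit) segment decomposition; everything else reduces either to standard properties of the Vogan variety or to direct unwinding of definitions.
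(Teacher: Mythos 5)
Your part (a) is essentially sound: the identification of $\Phi(G_n)_{\lambda}$ with orbits on the Vogan variety, the unique dense orbit and the zero orbit, and the implication from $\geq_C$ to dominance of partitions give existence; note that for the stated \emph{uniqueness} you must invoke the strict version recorded just above the proposition ($\phi_1 >_C \phi_2 \Rightarrow \underline{p}(\phi_1) > \underline{p}(\phi_2)$), since otherwise a second parameter could a priori share the maximal (or minimal) partition -- you have this ingredient but never actually run that argument. The Clebsch--Gordan construction of a tempered $\phi^T$ with $\lambda_{\phi^T}=\lambda_{\psi}$ is correct (one should also remark that the restriction to the diagonal $\SL_2$ preserves the relevant bilinear form, so $\phi^T$ genuinely factors through $\widehat{G}_n(\BC)$), and your computation that $\phi_{\widehat{\psi}^0}$ is trivial on $\SL_2(\BC)$ with restriction $\lambda$ to $W_F$, hence equals the closed parameter, is fine.

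The genuine gap is the remaining step of the forward direction of (b): from the existence of \emph{some} tempered parameter in $\Phi(G_n)_{\lambda}$ you must conclude that the \emph{open} parameter $\phi^0$ is tempered, i.e. that the tempered parameter realizes the dense orbit (equivalently, maximizes $\underline{p}$ among all of $\Phi(G_n)_{\lambda}$). Your ``segment-combinatorial check'' only asserts this: the claims that a centered-segment decomposition of $\lambda$ forces every maximal chain of characters to be symmetric about $|\cdot|^0$, and that the ``longest-segment'' decomposition is the one parametrizing the open orbit, are precisely the nontrivial statements and neither is justified -- for classical groups the orbit combinatorics is constrained by self-duality (and by the extra subtleties for $\SO_{2n}$), so the identification of the dense orbit with a greedy segment decomposition, and the preservation of centeredness under it, require an actual argument. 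This missing step is exactly the content the paper takes from \cite[Lemmas 6.2, 6.4]{HLLZ22}; the paper's proof of the proposition is nothing but that citation, so your proposal, read as a self-contained proof, stops short of the one point that is not formal.
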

\begin{proof}
These statements follow from \cite[Lemma 6.2, 6.4]{HLLZ22}.
\end{proof}
The nilpotent orbits version of Part (a) of above proposition also holds since $\underline{p}_{\OO} > \underline{p}_{\OO'}$ if and only if $\OO> \OO'$.

We also need the following adjectives on parameters.
\begin{defn}
    Let $\phi \in \Phi(G_n),$ $\psi \in \Psi(G_n)$ and $\lambda \in \Lambda(G_n)$.
    \begin{enumerate}
        \item We say $\phi$ is tempered if $\phi|_{W_F}$ has bounded image. We say $\psi$ is tempered if $\phi_{\psi}$ is tempered, or equivalently, $\psi|_{\SL_2^A(\BC)}$ is trivial.
        \item We say $\phi$ (resp. $\psi$, $\lambda$) is unramified if $\phi|_{I_F}$ (resp. $\psi|_{I_F}$, $\lambda|_{I_F}$) is trivial. 
        \item We say an unramified infinitesimal parameter $\lambda$ is real if the eigenvalues of $\lambda(\text{Fr})$ are all real, where $\text{Fr}$ is any choice of Frobenius in $W_F$.
    \end{enumerate}
\end{defn}
Note that any tempered $L$-parameter $\phi$ is of Arthur type since $\phi=\phi_{\psi}$ where $\psi=\phi \otimes S_1$. (I.e., $ \psi(w,x,y):= \phi(w,x).$)

Finally, we recall the definition of Deligne-Langlands-Lusztig parameters. We assume $G$ is inner to split for simplicity.
\begin{defn}\label{def DLL par}
    A Deligne-Langlands-Lusztig parameter of the group $G$ is a $\widehat{G}(\BC)$-orbit of a  triple $(s,x,\rho)$, where
    \begin{enumerate}
        \item [$\oldbullet$] $s \in \widehat{G}(\BC)$ is semisimple;
        \item [$\oldbullet$] $x \in \widehat{\mathfrak{g}}(\BC)$ such that $\Ad(s)x=qx$;
        \item [$\oldbullet$]  $\rho$ is an irreducible representation of the component group of $\textrm{Cent}_{\widehat{G}(\BC)}(\{s,x\})$ that is trivial on $Z(\widehat{G}(\BC))$. 
    \end{enumerate}
    We denote $\Phi^{\mathrm{Lus}}(G)$ the set of Deligne-Langlands-Lusztig parameter of $G$.
\end{defn}

Let $\Pi^{\textrm{Lus}}(G)\subseteq \Pi(G)$ denote the subset of representations of unipotent cuspidal support defined in \cite{Lus95}. The following theorem is proved by \cite{KL87, Lus95, Lus02}. We refer the reader to \cite[Theorem 4.1.1]{CMO23} for details.
\begin{thm}[{Deligne-Langlands-Lusztig Correspondence}]
    There is a bijection
    \begin{align*}
        \Phi^{\textrm{Lus}}(G) & \longrightarrow \Pi^{\textrm{Lus}}(G) \\
        (s,x,\rho) & \longmapsto X(s,x,\rho),
    \end{align*}
    satisfying several desiderata (see \cite[Theorem 4.1.1]{CMO23} for details).
\end{thm}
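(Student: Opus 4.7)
The plan is essentially to invoke the foundational work of Kazhdan--Lusztig \cite{KL87} and Lusztig \cite{Lus95, Lus02}, since this correspondence is a deep result that would not be reproved in a paper focused on weak local Arthur packets. Accordingly, my ``proof'' is really a roadmap for how one extracts the statement from the literature, organized in three steps.

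First, I would reduce to the level of affine Hecke algebras. By the Bernstein decomposition together with Lusztig's analysis of unipotent blocks, the full subcategory of smooth representations of $G$ generated by the representations in $\Pi^{\mathrm{Lus}}(G)$ splits into finitely many blocks, each of which is equivalent to the module category of an affine Hecke algebra $\mathcal{H}$, possibly with unequal parameters. Under this equivalence, irreducible objects in $\Pi^{\mathrm{Lus}}(G)$ correspond bijectively to simple $\mathcal{H}$-modules, and the semisimple element $s$ will record the central character on the Bernstein center.

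Second, I would recall the geometric classification of simple $\mathcal{H}$-modules. In the Iwahori case, Kazhdan--Lusztig \cite{KL87} attach to each triple $(s,x,\rho)$ as in Definition \ref{def DLL par} a standard module built from the equivariant Borel--Moore homology of the Springer-type variety
\[ \{ g\widehat{B} \in \widehat{G}(\BC)/\widehat{B} \ | \ \mathrm{Ad}(g^{-1})s \in \widehat{B}, \ \mathrm{Ad}(g^{-1})x \in \mathrm{Lie}(\widehat{B}) \}, \]
on which the component group $A(s,x) := \pi_0(\mathrm{Cent}_{\widehat{G}(\BC)}(\{s,x\}))$ acts naturally. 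The irreducible module $X(s,x,\rho)$ is then the unique simple quotient of the standard module indexed by $\rho$. The condition that $\rho$ be trivial on $Z(\widehat{G}(\BC))$ singles out precisely the representations that descend from an isogenous simply connected cover to $G$ itself.

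Third, non-Iwahori unipotent blocks are handled by Lusztig's extensions \cite{Lus95, Lus02}: one replaces the Iwahori subgroup by a parahoric carrying a cuspidal unipotent representation in its reductive quotient, and the same geometric recipe applies, now for an affine Hecke algebra with possibly unequal parameters. The main obstacle, were one to try to reconstruct the proof rather than cite it, is the exhaustion statement together with the matching of central characters: every simple $\mathcal{H}$-module must be realized by some triple $(s,x,\rho)$, and the module's central character must be the one prescribed by $s$. These steps rely on substantial input from the theory of character sheaves and the generalized Springer correspondence, well beyond the scope of the present paper; hence a reference to \cite[Theorem 4.1.1]{CMO23} is the honest and appropriate treatment.
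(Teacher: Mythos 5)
Your proposal matches the paper's treatment: the paper gives no proof of this theorem but simply attributes it to \cite{KL87, Lus95, Lus02} and points to \cite[Theorem 4.1.1]{CMO23} for the precise desiderata, exactly the references your roadmap rests on. Your added outline (Bernstein/unipotent blocks, the Kazhdan--Lusztig geometric construction, and Lusztig's unequal-parameter extension) is a reasonable sketch of that literature, but it is not something the paper attempts or needs.
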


We assume that the Local Langlands Correspondence is compatible with above correspondence (see \cite[\S 2.3]{AMS21} and \cite[\S 4]{AMS22}). Namely, the representation $\pi\in \Pi(G_n)$ is of unipotent cuspidal support if and only if its local $L$-parameter $\phi_{\pi}$ by is unramified. In this case, let
\begin{align*}
    s= \phi_{\pi}\left(\Fr, \begin{pmatrix}
    q^{1/2}& 0\\ 0& q^{-1/2} 
\end{pmatrix} \right) ,\ \ x= d(\phi_{\pi}|_{\SL_2})\left(\begin{pmatrix}
    0& 1\\ 0& 0 
\end{pmatrix} \right), 
\end{align*}
where $\Fr$ is any choice of Frobenius in $W_F$. Then $\pi= X(s,x,\rho)$ for some $\rho$.

\begin{remark}
    Indeed, let $s,x$ be defined above. There is an equality
    \[\textrm{Cent}_{\widehat{G}_n(\BC)}(\{s,x\})= \textrm{Cent}_{\widehat{G}_n(\BC)} (\textrm{im}(\phi)).\]
    Thus if $(\phi,\chi)$ is the enhanced $L$-parameter of $\pi$, then $\chi$ can also be viewed as an irreducible representation of the component group of $\textrm{Cent}_{\widehat{G}(\BC)}(\{s,x\})$ that is trivial on $Z(\widehat{G}(\BC))$. Though not needed in this paper, one may add the requirement that $\rho=\chi$ under the comparison.   
\end{remark}

\subsection{Aubert-Zelevinsky involution}\label{sec AZ}
Let $G$ be any connected reductive algebraic group defined over $F$. In \cite{Aub95}, she showed that for any representation $\pi$ of $\Pi(G)$, there exists $\varepsilon\in\{\pm 1\}$ such that
\begin{align*}
\widehat{\pi}:=\varepsilon\sum_P (-1)^{\mathrm{dim}(A_P)}[\mathrm{Ind}_{P}^{G_n}(\mathrm{Jac}_P(\pi))]
\end{align*}
gives an irreducible representation. Here the sum is in the Grothendieck group and is taking over all standard parabolic subgroups $P$ of $G_n$ and $A_P$ is the maximal split torus of the center of the Levi subgroup of $P.$ Moreover, the map $\pi \mapsto \widehat{\pi}$ is an involution on $\pi(G)$. We call $\widehat{\pi}$ the Aubert-Zelevinsky involution of $\pi.$ 

It is expected that the Aubert-Zelevinsky involution is compatible with the local Arthur packet $\Pi_{\psi}$ in the sense that 
\[ \Pi_{\widehat{\psi}}= \{ \widehat{\pi}\ | \ \pi \in \Pi_{\psi} \}.\]
(Recall that $\widehat{\psi}$ is defined by \eqref{eq psi hat}.) When $G=G_n$, this is discussed and proved in \cite[\S 7.1]{Art13} and \cite[\S A]{Xu17}. 

\section{Fibers of the Barbasch-Vogan duality}\label{sec: fibers}

In this section, we first recall certain results from \cite{LLS23} on the fibers of the Barbasch-Vogan duality. Then we prove a key result (Proposition \ref{prop fiber tempered nilpotent orbit} below) showing that certain $L$-parameters are of Arthur type, which plays an important role in the proof of the weak local Arthur packets conjecture next section. 

\subsection{Partitions and nilpotent orbits}\label{sec fiber}
Let $(X,X')\in \{(B,C),(C,B), (D,D)\}$. In this subsection, we describe the structure of the sets of partitions 
\[ d_{BV}^{-1}(\underline{\mathfrak{p}}):=\{\underline{p} \in \mathcal{P}_{X} \ | \ d_{BV}(\underline{p})= \underline{\mathfrak{p}} \},\]
for a special partition $\underline{\mathfrak{p}}\in \mathcal{P}_{X'}$. In \cite{LLS23}, joint with Shahidi, we give an explicit description of $d_{BV}^{-1}(\underline{\mathfrak{p}})$  and relate it with $d_{BV}^{-1}(\OO')$ (see Proposition \ref{prop nilpotent fiber} below). We recall certain results as follows.

Recall that when we write a partition $\underline{p}$ as $[p_1,\dots, p_r]$, we require $p_i$'s to be non-increasing. We set $p_t=0$ for any $t >r$ throughout this section.
Given a partition $\underline{p} \in \mathcal{P}_X$, the following lemma describes a necessary condition on $\underline{q}$ such that $\underline{p}\geq \underline{q}$ and $d_{BV}(\underline{p})= d_{BV}(\underline{q})$.

\begin{lemma}[{\cite[Lemma 3.5, Corollary 3.9]{LLS23}}]
\label{lemma nec}
    Let $X \in \{B,C,D\}$.
    Suppose $\underline{p}=[p_1,\dots, p_r], \underline{q}=[q_1 ,\dots, q_s] \in \mathcal{P}_X(n)$ satisfy that $\underline{p}\geq \underline{q}$ and $d_{BV}(\underline{p})= d_{BV}(\underline{q})$. Then there exists a sequence of pairs of positive integers $\{(x_i,y_i)\}_{i=1}^\alpha$ where
    \begin{enumerate}
        \item [(a)] $1 \leq x_i < y_i \leq r+1$;
        \item [(b)] $p_{x_{i}}= p_{x_i+1}+1=\cdots = p_{y_i-1}+1= p_{y_i}+2$, where we set $p_{r+1}=0$;
        \item [(c)] the sequence $( p_{x_1},\dots, p_{x_\alpha})$ is strictly decreasing;
    \end{enumerate}
    such that $\underline{q}$ can be obtained from $\underline{p}$ by replacing $\{ p_{x_i} , p_{y_i}\}_{i=1}^\alpha$ in $\underline{p}$ with $ \{ p_{x_i}-1, p_{y_i}+1\}_{i=1}^\alpha$. In particular, for any $1 \leq t \leq r$, we have
\begin{align}\label{eq drop}
    0\leq \sum_{z=1}^t p_z - \sum_{i=1}^t q_z \leq 1.
\end{align}\end{lemma}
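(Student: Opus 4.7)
The plan is to exploit the fact that the Barbasch--Vogan duality $d_{BV}$ reverses the dominance order on partitions of fixed type: this is built into its definition via transpose, $X'$-collapse, and the $\pm$ shifts. Consequently the hypothesis $\underline{p}\ge\underline{q}$ combined with $d_{BV}(\underline{p})=d_{BV}(\underline{q})$ is an \emph{extremal} equality, and the task is to read off exactly what rigidity this forces on the way $\underline{q}$ is obtained from $\underline{p}$. I would approach this by decomposing $\underline{p}\to\underline{q}$ into a sequence of atomic ``box swaps'' $(p_a,p_b)\to(p_a-1,p_b+1)$ with $a<b$ staying within $\mathcal{P}_X(n)$, and classifying which of these atomic swaps are $d_{BV}$-neutral.

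Executing the classification is a direct, but careful, diagram chase. Writing $d_{BV}(\underline{r})=(\underline{r}^{\epsilon})_{X'}{}^{\ast}$ for the appropriate sign $\epsilon$, the equality $d_{BV}(\underline{p})=d_{BV}(\underline{q})$ translates, via the transpose and the structure of collapse, into a column-by-column comparison of the Young diagrams of $\underline{p}$ and $\underline{q}$. A single box swap preserves $d_{BV}$ exactly when it moves a box at a column boundary that will be flattened by the $X'$-collapse; unwinding this produces precisely the plateau condition (b), namely $p_{x_i}=p_{x_i+1}+1=\cdots=p_{y_i-1}+1=p_{y_i}+2$. This step is where the type distinction $X\in\{B,C,D\}$ enters, because the parity constraints defining $\mathcal{P}_X(n)$ interact with the collapse; the analysis in \cite{LLS23} handles the three cases uniformly by working with the multiplicities of each part.

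With the atomic neutral moves in hand, I would prove the lemma by induction on a measure of how far $\underline{q}$ is from $\underline{p}$, such as $\sum_t\bigl(\sum_{z\le t}p_z-\sum_{z\le t}q_z\bigr)$. At each step, take a $d_{BV}$-neutral swap at the tallest plateau on which $\underline{p}$ and $\underline{q}$ still disagree, producing an intermediate $\underline{p}'$ with $\underline{p}\ge\underline{p}'\ge\underline{q}$ and $d_{BV}(\underline{p}')=d_{BV}(\underline{p})$, and iterate. The main obstacle is arranging the resulting pairs $\{(x_i,y_i)\}$ so that (a), (b), (c) hold \emph{simultaneously with respect to the original $\underline{p}$}, not with respect to the intermediate partitions; the remedy is to process plateaus in strictly decreasing order of height. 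Because each neutral swap at height $p_{x_i}$ only touches rows with values in $\{p_{x_i}-2,p_{x_i}-1,p_{x_i}\}$, swaps performed later at strictly smaller heights leave the earlier plateaus intact, and condition (c) becomes automatic. Finally, the bound \eqref{eq drop} is a bookkeeping consequence of the structural description: each pair $(x_i,y_i)$ shifts a single box out of the first $t$ rows exactly when $x_i\le t<y_i$, and the plateau shape together with (c) prevents two such intervals from both straddling any fixed $t$, capping the surplus at one.
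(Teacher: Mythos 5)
The paper does not actually prove this lemma (it is quoted from \cite[Lemma 3.5, Corollary 3.9]{LLS23}), so your argument has to stand on its own, and as written it has two genuine gaps. The pivotal claim of your second paragraph --- that a single swap $(p_a,p_b)\mapsto(p_a-1,p_b+1)$ is $d_{BV}$-neutral \emph{exactly when} it sits on a plateau of shape (b) --- is false in the ``if'' direction, and the ``careful diagram chase'' that would give the correct characterization is never carried out. Concretely, in type $C$ take $\underline{p}=[4,2]$ and the swap $(x,y)=(1,2)$: condition (b) holds and the result $\underline{q}=[3,3]$ lies in $\mathcal{P}_C(6)$ with $\underline{p}\geq\underline{q}$, yet $d_{BV}([4,2])=[3,1,1,1,1]$ while $d_{BV}([3,3])=[3,2,2]$. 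So whether a plateau move is neutral depends on finer parity and multiplicity data of the whole partition through the $X'$-collapse; condition (b) is only the necessary \emph{shape} of a neutral move, which is precisely what the lemma asserts and what still has to be proved. (Note also that the converse of the lemma fails in general, so any argument that implicitly yields an ``exactly when'' with (b) alone cannot be right.)

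Because of this, your induction has no engine: at each step you must exhibit a partition $\underline{p}'\in\mathcal{P}_X(n)$ with $\underline{p}>\underline{p}'\geq\underline{q}$, $d_{BV}(\underline{p}')=d_{BV}(\underline{p})$, obtained from $\underline{p}$ by one plateau move at the top disagreement --- but the existence of such an intermediate is essentially the content of the lemma and is nowhere justified. Likewise, the assumption that the chain of atomic swaps can be kept inside $\mathcal{P}_X(n)$ is not automatic: a single (b)-move can leave the type, e.g.\ $[5,5,4,3,3]\mapsto[5,4,4,4,3]$ in type $C$; that intermediates stay of type $X$ (the moves occur only at heights of the unconstrained parity) is a \emph{consequence} of the final structure, not an input you may assume. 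Finally, your derivation of \eqref{eq drop} tacitly uses that the intervals $[x_i,y_i]$ are pairwise disjoint; conditions (b) and (c) alone still allow the borderline chaining $x_j=y_i-1$, in which two intervals straddle $t=y_i-1$, so this point also needs an argument. In short, the plan identifies the right objects (plateau moves, order-reversal of $d_{BV}$, top-down processing), but the key structural step --- that $\underline{p}\geq\underline{q}$ together with $d_{BV}(\underline{p})=d_{BV}(\underline{q})$ localizes the difference into such plateaus --- is asserted rather than proved, and the stated neutrality criterion it rests on is incorrect.
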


We need the following refinement.

\begin{cor}\label{cor fiber partition refinement}
Let $X \in \{B,C,D\}$.
    Suppose $\underline{p}=[p_1,\dots, p_r], \underline{q}=[q_1 ,\dots, q_s] \in \mathcal{P}_X(n)$ satisfy that $\underline{p}\geq \underline{q}$ and $d_{BV}(\underline{p})= d_{BV}(\underline{q})$. Suppose further that $\underline{p}= \bigsqcup_{j \in J} \underline{p_j}$ and $\underline{q}= \bigsqcup_{j \in J} \underline{q_j}$ such that $\underline{p_j} \geq \underline{q_j}$ for all $j \in J$. Write $\underline{p_j}=[p_{j,1},\dots, p_{j,r_j}]$ and $\underline{q_j}=[q_{j,1},\dots, q_{j,s_j}]$. Then for each $j \in J$, there exists a sequence of pairs of positive integers $\{(x_{(j,k)},y_{(j,k)})\}_{k=1}^{\alpha_j}$ where
    \begin{enumerate}
        \item [(a)] $1 \leq x_{(j,k)} < y_{(j,k)} \leq r_j+1$;
        \item [(b)] $p_{j,x_{(j,k)}}= p_{j,x_{(j,k)}+1}+1=\cdots = p_{j,y_{(j,k)}-1}+1= p_{j,y_{(j,k)}}+2$, where we set $p_{j,r_j+1}=0$;
        \item [(c)] the sequence $( p_{j,x_{(j,1)}},\dots, p_{j,x_{(j,\alpha_j)}})$ is strictly decreasing;
    \end{enumerate}
    such that $\underline{q_j}$ can be obtained from $\underline{p_j}$ by replacing $\{ p_{j,x_{(j,k)}} , p_{j,y_{(j,k)}}\}_{k=1}^{\alpha_j}$ in $\underline{p_j}$ with $ \{ p_{j,x_{(j,k)}}-1, p_{j,y_{(j,k)}}+1\}_{k=1}^{\alpha_j}$. In particular, for any $1 \leq t \leq r_j$, we have
    \begin{align}\label{eq drop split}
        0\leq \sum_{z=1}^t p_{j,z} - \sum_{z=1}^t q_{j,z} \leq 1.
    \end{align}  
\end{cor}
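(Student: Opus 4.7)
The strategy is to invoke Lemma \ref{lemma nec} for the global pair $(\underline{p}, \underline{q})$ and then to show that the resulting swap pairs automatically respect the decomposition $\underline{p} = \bigsqcup_{j \in J} \underline{p_j}$, $\underline{q} = \bigsqcup_{j \in J} \underline{q_j}$. Note that the dominance hypothesis $\underline{p_j} \geq \underline{q_j}$ tacitly forces $|\underline{p_j}| = |\underline{q_j}|$, a fact we will use repeatedly.

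First I would apply Lemma \ref{lemma nec} to obtain a global sequence of swap pairs $\{(x_i, y_i)\}_{i=1}^{\alpha}$ describing the transformation $\underline{p} \rightsquigarrow \underline{q}$. Each such swap takes an entry of value $v_i := p_{x_i}$ to $v_i - 1$ and an entry of value $v_i - 2 = p_{y_i}$ to $v_i - 1$, with all intermediate positions already equal to $v_i - 1$. By condition (c) of that lemma, the heights $v_1 > v_2 > \cdots > v_\alpha$ are strictly decreasing.

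The heart of the proof is the localization claim: for every swap pair $(x_i, y_i)$, both positions $x_i$ and $y_i$ correspond to entries of the same sub-partition $\underline{p_{j_i}}$. I would establish this by descending induction on the height $v_i$. At the top height $v_1$, consider the multiset change $m_v \mapsto m_v - 1$, $m_{v-2} \mapsto m_{v-2} - 1$, $m_{v-1} \mapsto m_{v-1} + 2$ induced by the swap. Since no lower-indexed swap can affect the count of $v_1$-valued entries (all later swaps act at heights $\leq v_1 - 2$ on the top of their own runs), the single entry of value $v_1$ in $\underline{p}$ that is decremented must come from the unique $\underline{p_j}$ whose count of $v_1$-valued entries exceeds that of $\underline{q_j}$; call this $j_1$. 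The dominance $\underline{p_{j_1}} \geq \underline{q_{j_1}}$ together with $|\underline{p_{j_1}}| = |\underline{q_{j_1}}|$ then forces the compensating increment at value $v_1 - 1$ (and hence a matching decrement at $v_1 - 2$) to lie in the same sub-partition $\underline{p_{j_1}}$; otherwise the truncated sum of $\underline{p_{j_1}}$ strictly above $v_1 - 2$ would drop below that of $\underline{q_{j_1}}$, contradicting $\underline{p_{j_1}} \geq \underline{q_{j_1}}$. Subtracting this swap from both sides and applying the inductive hypothesis to the remaining swap pairs at lower heights completes the argument.

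Once localization is established, I would translate each global swap $(x_i, y_i)$ into its internal indexing in $\underline{p_{j_i}}$ to produce $(x_{(j_i, k)}, y_{(j_i, k)})$. Condition (a) is immediate from the order-preserving nature of the sub-indexing, and condition (c) is inherited from the strict decrease of $(v_i)$. Condition (b) holds because the positions of $\underline{p_{j_i}}$ strictly between $x_{(j_i, k)}$ and $y_{(j_i, k)}$ form a subsequence of those between $x_i$ and $y_i$ in $\underline{p}$, hence all equal to $v_i - 1$. Finally, inequality \eqref{eq drop split} is derived directly from the per-$j$ swap pair structure by exactly the same accounting that produces \eqref{eq drop}: the running difference $\sum_{z=1}^t p_{j,z} - \sum_{z=1}^t q_{j,z}$ jumps up by $1$ at each $x_{(j,k)}$ and down by $1$ at each $y_{(j,k)}$, and condition (c) prevents it from leaving $\{0, 1\}$.

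The main obstacle is the localization claim. The only tool relating the sub-partitions is the dominance condition $\underline{p_j} \geq \underline{q_j}$, and extracting from it the rigid statement that no swap pair may cross between distinct parts requires a careful descending induction that simultaneously tracks how each swap perturbs the truncated sums $F_t(\underline{p_j}) - F_t(\underline{q_j})$ at each level.
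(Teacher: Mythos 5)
Your overall plan (apply Lemma \ref{lemma nec} globally and then ``localize'' each swap pair to a single sub-partition by a descending induction on the heights $v_i$) is a genuinely different route from the paper, but as written the central localization step has real gaps. First, before you can speak of ``the unique $\underline{p_j}$ whose count of $v_1$-valued entries exceeds that of $\underline{q_j}$,'' you must rule out local discrepancies in the multiplicities at values $>v_1$ and at $v_1$ itself being spread over several pieces (e.g.\ $+2$ in one piece and $-1$ in another); the remark that later swaps do not change the global count of $v_1$ only controls $\sum_j\bigl(\mathrm{mult}_{v_1}(\underline{p_j})-\mathrm{mult}_{v_1}(\underline{q_j})\bigr)$, not the individual terms, so existence and uniqueness of $j_1$ require an argument (it can be extracted from dominance via the ``largest discrepancy value'' principle, but you never give it). Second, the justification you do give for the increment at $v_1-1$ and the decrement at $v_1-2$ landing in $j_1$ points in the wrong direction: if $\underline{q_{j_1}}$ loses a part $v_1$ and gains nothing above $v_1-2$, it is the truncated sums of $\underline{q_{j_1}}$ that drop below those of $\underline{p_{j_1}}$, which is perfectly consistent with $\underline{p_{j_1}}\geq\underline{q_{j_1}}$ and yields no contradiction; the obstruction, if any, sits in the \emph{other} piece $j'$ that would absorb the increment (a $q$-side excess at $v_1-1$ there contradicting $\underline{p_{j'}}\geq\underline{q_{j'}}$), and making that precise forces you to track partial sums across all pieces simultaneously --- essentially to prove the inequality \eqref{eq drop split} that you were hoping to obtain only at the end. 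Third, the induction scaffolding is not in place: you conflate the effect of the single top swap with the net multiset difference $\underline{p_{j_1}}$ versus $\underline{q_{j_1}}$ (later swaps inside $j_1$ can partially cancel the $+2$ at $v_1-1$), you never check that $\underline{p_{j_1}}$ actually contains a part equal to $v_1-2$ (or $v_1=2$) so the top swap can be performed inside it, and you never verify that after peeling this swap the hypotheses (local dominance for $j_1$ and the remaining strictly-decreasing swap structure) still hold so that the inductive hypothesis applies.

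For comparison, the paper avoids these issues by reversing the order of deduction: it first proves \eqref{eq drop split} directly, reducing to $|J|=2$ and comparing the partial sums of $\underline{p}=\underline{p_1}\sqcup\underline{p_2}$, of the interleaved sequence built from $\underline{p_1}$ and $\underline{q_2}$ in the positions of $\underline{q}$, of its sorted rearrangement $\underline{p_1}\sqcup\underline{q_2}$, and of $\underline{q}$, and then squeezes with the global bound \eqref{eq drop}; only afterwards does it extract the per-$j$ swap pairs from \eqref{eq drop split} and obtain condition (b) by matching the local heights with the global ones from Lemma \ref{lemma nec}. If you want to salvage your approach, you should either import that partial-sum comparison to justify the localization, or carry out the multiplicity bookkeeping (no discrepancy above $v_1$, a unique $j_1$ with $\delta_{j_1}(v_1)=1$, non-negativity of the discrepancies at $v_1-1$ for $j\neq j_1$, availability of a $v_1-2$ part in $j_1$, and stability of the hypotheses under peeling) in full.
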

\begin{proof}
The idea of the proof is similar to \cite[Lemma 5.1.1]{CMO23}. We shall use the following two statements whose proof can be found there.
\begin{enumerate}
    \item [(i)] If $\underline{p_1} \geq \underline{q_1}$ and $\underline{p_2} \geq \underline{q_2}$, then $\underline{p_1} \sqcup \underline{p_2} \geq \underline{q_1} \sqcup\underline{q_2}$.
    \item [(ii)] If $(\tau_1,\dots, \tau_{\gamma})$ is a sequence of  non-increasing integers, and $\sigma$ is any permutation of $\{1,\dots, \gamma\}$, then for any $1 \leq t \leq \gamma$,
    \[  \sum_{i=1}^t \tau_i  \geq  \sum_{i=1}^t \tau_{\sigma(i)}. \]
\end{enumerate}

Fix a $j \in J$. By considering the decomposition 
\[\underline{p}= \underline{p_j} \sqcup \left(\bigsqcup_{j' \in J \setminus \{j\}} \underline{p_{j'}}  \right),\ \underline{q}= \underline{q_j} \sqcup \left(\bigsqcup_{j' \in J \setminus \{j\}} \underline{q_{j'}}  \right),\]
we may assume $|J|=2$ and label $J=\{1,2\}$ with $j=1$. First, we prove the inequality \eqref{eq drop split}.

Recall that $\underline{q}=\underline{q_1} \sqcup \underline{q_2}$ and hence $s=s_1+s_2$. Let $f: \{ 1,\dots, s_1+s_2 \} \to \{1,2\}$ be a function such that 
\begin{align*}
    q_{i}= \begin{cases}
    q_{1, |\{ k \leq i \ | \ f(k)=1 \}|} & \text{ if }f(i)=1,\\
    q_{2, |\{ k \leq i \ | \ f(k)=2 \}|} & \text{ if }f(i)=2.
\end{cases}
\end{align*}
 Assume $r_1=s_1$ by adding zero to $\underline{p_1}$ if necessary, we consider a sequence of integers $\lambda=(\lambda_{1},\dots, \lambda_{s_1+s_2})$ given by
\[\lambda_i:=\begin{cases}
    p_{1, |\{ k \leq i \ | \ f(k)=1 \}|} & \text{ if }f(i)=1,\\
    q_{2, |\{ k \leq i \ | \ f(k)=2 \}|} & \text{ if }f(i)=2.
\end{cases}\]
Then there exists a permutation $\sigma$ of $\{1,\dots, r_1+s_2\}$ such that $\underline{p_1} \sqcup\underline{q_2}=[\lambda_{\sigma(1)},\dots, \lambda_{\sigma(r_1+s_2)} ].$ Note that we require the sequence $(\lambda_{\sigma(1)},\dots, \lambda_{\sigma(r_1+s_2)})$ to be non-increasing. Then for any $1 \leq t \leq r$, we have
\[ \sum_{z=1}^t p_z \geq \sum_{z=1}^t \lambda_{\sigma(z)} \geq \sum_{z=1}^t \lambda_{z} \geq \sum_{z=1}^t q_z. \]
Here the first inequality follows from $\underline{p}=\underline{p_1}\sqcup \underline{p_2} \geq \underline{p_1} \sqcup \underline{q_2}$ by (i), the second inequality follows from (ii), and the last inequality follows from $\underline{p_1} \geq \underline{q_1}$. As a consequence, for any $1 \leq t \leq r$, the inequality \eqref{eq drop} in Lemma \ref{lemma nec} gives
\[1 \geq \sum_{z=1}^t p_z -\sum_{z=1}^t q_z \geq \sum_{z=1}^t \lambda_z- \sum_{z=1}^t  q_z \geq \sum_{z=1}^t q_z- \sum_{z=1}^tq_z= 0.\]
Since 
\[\sum_{z=1}^t \lambda_z- \sum_{z=1}^t q_z=  \sum_{z=1}^{|\{k \leq t \ | \ f(k)=1\}|}p_{1,z}- \sum_{z=1}^{|\{k \leq t \ | \ f(k)=1\}|} q_{1,z},\]
this proves \eqref{eq drop split} by varying $t$.

As a consequence of \eqref{eq drop split}, for $j\in \{1,2\}$, there exits a sequence of pairs of positive integers $\{(x_{(j,k)},y_{(j,k)})\}_{k=1}^{\alpha_j}$ that satisfies Conditions (a) and (c) such that $\underline{q_j}$ can be obtained from $\underline{p_j}$ by replacing $\{ p_{j,x_{(j,k)}} , p_{j,y_{(j,k)}}\}_{k=1}^{\alpha_j}$ in $\underline{p_j}$ with $ \{ p_{j,x_{(j,k)}}-1, p_{j,y_{(j,k)}}+1\}_{k=1}^{\alpha_j}$. Also, since 
\[ \{p_{1,x_{(1,k)}}\}_{k=1}^{\alpha_1}\sqcup \{p_{2,x_{(2,k)}}\}_{k=1}^{\alpha_2} = \{p_{x_i}\}_{i=1}^{\alpha},\ \{p_{1,y_{(1,k)}}\}_{k=1}^{\alpha_1}\sqcup \{p_{2,y_{(2,k)}}\}_{k=1}^{\alpha_2} = \{p_{y_i}\}_{i=1}^{\alpha}, \]
where $\{(x_i,y_i)\}_{i=1}^{\alpha}$ is given by Lemma \ref{lemma nec}, Condition (b) also holds for $\{(x_{(j,k)},y_{(j,k)})\}_{k=1}^{\alpha_j}$. This completes the proof of the corollary.
\end{proof}

Finally, for $(X,X')\in \{ (B,C),(C,B),(D,D)\}$ and a special $\OO' \in \mathcal{N}_{X'}$, $d_{BV}^{-1}(\OO')$ can be related with $d_{BV}^{-1}(\underline{p}_{\OO'})$ in the following proposition.

\begin{prop}[{\cite[Proposition 2.10]{LLS23}}]
\label{prop nilpotent fiber}
    Let $(X,X')\in \{ (B,C),(C,B),(D,D)\}$. For each special $\OO' \in \mathcal{N}_{X'}$, we have the following.
    \begin{enumerate}
        \item [(a)] If $\underline{\mathfrak{p}}:=\underline{p}_{\OO'}$ is not very even of type $D$, then any $\underline{p} \in d_{BV}^{-1}(\underline{\mathfrak{p}})$ is not very even, and
        \[ d_{BV}^{-1}(\OO')= \{\OO_{\underline{p}} \ | \ \underline{p} \in d_{BV}^{-1}(\underline{\mathfrak{p}}) \}. \]
        \item [(b)] If $\underline{\mathfrak{p}}:=\underline{p}_{\OO'}$ is very even of type $D$, then
        \[ d_{BV}^{-1}(\OO')= \{  d_{BV}(\OO')\}, \]
        which is a singleton.
    \end{enumerate}
\end{prop}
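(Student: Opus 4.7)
My plan is first to reduce to the only nontrivial case $(X,X')=(D,D)$: when $X'\in\{B,C\}$, no type $B$ or type $C$ partition can be very even, so the orbit-partition correspondence is a bijection, (b) is vacuous, and (a) is the tautology $d_{BV}^{-1}(\OO_{\underline{\mathfrak{p}}})=\{\OO_{\underline{p}}\,|\,\underline{p}\in d_{BV}^{-1}(\underline{\mathfrak{p}})\}$. With that reduction, the entire content of the proposition hinges on the partition-level bi-implication
\[
(\star)\qquad \underline{p}\in\mathcal{P}_D(2n)\ \text{is very even}\ \Longleftrightarrow\ d_{BV}(\underline{p})\ \text{is very even.}
\]

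The forward direction of $(\star)$ will be a direct transpose computation. Writing a very even $\underline{p}=[2a_1^{r_1},\ldots,2a_m^{r_m}]$ with each $r_k$ even, the Young diagram of $\underline{p}$ has equal column heights at positions $2i-1$ and $2i$ because every part is even; hence each distinct part of $\underline{p}^*$ appears with even multiplicity, and each such column height is a partial sum of the $r_k$'s, so even. Thus $\underline{p}^*$ is itself very even and already of type $D$, giving $d_{BV}(\underline{p})=(\underline{p}^*)_D=\underline{p}^*$ very even. For the reverse direction, suppose $d_{BV}(\underline{p})=\underline{q}$ is very even. Because $d_{BV}^2$ dominates the identity and $\underline{q}$ is special, every $\underline{r}\in d_{BV}^{-1}(\underline{q})$ satisfies $\underline{r}\leq d_{BV}(\underline{q})=:\underline{p}_0$, and $\underline{p}_0$ is very even by the forward direction just established. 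If $\underline{p}<\underline{p}_0$, then applying Lemma \ref{lemma nec} to the comparable pair $\underline{p}_0\geq\underline{p}$ in $d_{BV}^{-1}(\underline{q})$ produces a pair $(x_1,y_1)$ with $(p_0)_{x_1}=(p_0)_{x_1+1}+1$; this forces two adjacent entries of $\underline{p}_0$ to have opposite parities, contradicting very evenness. Hence $\underline{p}=\underline{p}_0$ is very even.

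Granting $(\star)$, both parts of the proposition follow quickly. For (a), $(\star)$ forbids any very even element in $d_{BV}^{-1}(\underline{\mathfrak{p}})$, so each $\OO_{\underline{p}}$ on the right-hand side is an unambiguously defined orbit and $d_{BV}(\OO_{\underline{p}})=\OO_{d_{BV}(\underline{p})}=\OO'$; conversely, any very even orbit $\OO_{\underline{r}}^{I}$ or $\OO_{\underline{r}}^{II}$ has image still living over a very even partition by the orbit-level rule, so it cannot map to the non-very-even $\OO'$. For (b), $(\star)$ collapses the partition fiber to $\{d_{BV}(\underline{\mathfrak{p}})\}$, whose only orbit lifts are the very even pair $\OO_{d_{BV}(\underline{\mathfrak{p}})}^{I}$ and $\OO_{d_{BV}(\underline{\mathfrak{p}})}^{II}$; the orbit-level recipe on very even classes, together with the fact that $d_{BV}^2=\mathrm{id}$ on special orbits, selects exactly one of them, namely $d_{BV}(\OO')$, as the preimage of $\OO'$. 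I expect the main obstacle to be the reverse direction of $(\star)$: one must extract the precise parity obstruction from the move-structure in Lemma \ref{lemma nec} and combine it with the maximality of $\underline{p}_0$ in its fiber. Once $(\star)$ is established, everything at the orbit level is bookkeeping around the $I$/$II$ labeling.
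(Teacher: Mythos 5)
The paper does not actually prove this proposition (it is imported from [LLS23, Proposition 2.10]), so I can only judge your argument on its own terms. Your reduction to $(X,X')=(D,D)$ is fine (note only that the real point in types $B$, $C$ is that the orbit--partition map is a bijection; a type $C$ partition such as $[2,2]$ can perfectly well have all parts even), your forward direction of $(\star)$ is correct, and part (a) does follow from it. The genuine gap is in the reverse direction of $(\star)$, which is precisely what part (b) needs. You read condition (b) of Lemma \ref{lemma nec} as always giving $(p_0)_{x_1}=(p_0)_{x_1+1}+1$, but when $y_1=x_1+1$ the chain degenerates to the single equality $(p_0)_{x_1}=(p_0)_{y_1}+2$, i.e.\ consecutive entries differing by $2$, which is entirely compatible with very evenness. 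Such pairs really occur: $\underline{p}_0=[4,4,2,2]$ admits $(x_1,y_1)=(2,3)$, and $[2,2]$ admits $(2,3)$ using the appended zero. So no parity contradiction arises, and your argument only excludes pairs with $y_1>x_1+1$; it does not show that the fiber over a very even partition contains nothing below $\underline{p}_0$.

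The statement is nevertheless true, but closing the gap requires using that $\underline{p}$ itself lies in $\mathcal{P}_D$, not merely that $\underline{p}_0$ is very even. Since all parts of $\underline{p}_0$ are even, every pair in Lemma \ref{lemma nec} is forced to have $y_i=x_i+1$, so each move takes one occurrence of an even value $v=p_{x_i}$ and one occurrence of $v-2$ (possibly the appended $0$) and converts them into two copies of the odd value $v-1$. By condition (c) there is at most one move per value; let $v_{\max}$ be the largest value at which a move occurs. The multiplicity of $v_{\max}$ in $\underline{p}$ equals its multiplicity in $\underline{p}_0$ (even and at least $2$) minus exactly one, because only a move at the value $v_{\max}+2$ could delete a second occurrence and none exists; hence $v_{\max}$ is an even part of $\underline{p}$ with odd positive multiplicity, contradicting $\underline{p}\in\mathcal{P}_D$. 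Therefore no move occurs, $\underline{p}=\underline{p}_0=d_{BV}(\underline{q})$, and since on very even partitions $d_{BV}$ is just the transpose, which is injective, the partition fiber is the singleton $\{d_{BV}(\underline{\mathfrak{p}})\}$; your remaining bookkeeping with the $I/II$ labels and $d_{BV}^2=\mathrm{id}$ on special orbits then completes part (b).
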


\subsection{\texorpdfstring{$L$-parameters}{}}\label{sec fiber L-par}
In this subsection, we prove the following proposition showing that certain $L$-parameters are of Arthur type.

\begin{prop}\label{prop fiber tempered nilpotent orbit}
    Let $G_n$ be the split group $\SO_{2n+1}(F)$, $\Sp_{2n}(F)$ or $\SO_{2n}(F)$ and $\lambda \in \Lambda(G_n)$. Suppose the unique open $L$-parameter $\phi^0$ of $\Phi(G_n)_{\lambda}$ is tempered and denote $\OO':= d_{BV}(\OO_{\phi^0})$. Then any $L$-parameter $\phi$ in
\[(d_{BV})^{-1}_{\Phi(G_n)_{\lambda}}(\OO'):=\{ \phi \in \Phi(G_n)_{\lambda} \ | \ d_{BV}(\OO_{\phi})=\OO' \}\]
   is of Arthur type.
\end{prop}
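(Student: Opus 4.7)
The plan is to construct, for each $\phi \in (d_{BV})^{-1}_{\Phi(G_n)_{\lambda}}(\OO')$, an explicit local Arthur parameter $\psi$ with $\phi_{\psi} = \phi$, guided by the combinatorial description of the fiber from Lemma \ref{lemma nec} and Corollary \ref{cor fiber partition refinement}. First I would decompose the tempered open parameter $\phi^0 = \bigoplus_i \phi_i \otimes S_{m_i}$ into isotypic components indexed by bounded irreducible representations $\phi_i$ of $W_F$. Since $\phi$ shares this infinitesimal parameter, it admits a compatible decomposition $\phi = \bigoplus_k \phi_{i_k}|\cdot|^{s_k} \otimes S_{a_k}$ using the same isotypic classes. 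By Proposition \ref{prop open par}(a) we have $\underline{p}(\phi^0) \geq \underline{p}(\phi)$, and combined with $d_{BV}(\OO_{\phi}) = d_{BV}(\OO_{\phi^0})$, Corollary \ref{cor fiber partition refinement} furnishes, for each isotypic class $[\rho]$, a sequence of moves taking the $\rho$-subpartition of $\phi^0$ to that of $\phi$ with data $\{(x_{(\rho,k)}, y_{(\rho,k)})\}_{k}$.

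The central idea is to interpret each individual move $\{a, a-2\} \to \{a-1, a-1\}$ as an \emph{Arthur $b=2$ promotion}: in the Arthur parameter $\psi^0 = \phi^0 \otimes S_1$, replace the two tempered summands $\rho \otimes S_a \otimes S_1$ and $\rho \otimes S_{a-2} \otimes S_1$ by the single Arthur summand $\rho \otimes S_{a-1} \otimes S_2$. A routine calculation shows that this substitution preserves the infinitesimal parameter and effects exactly the partition change dictated by the move, because the $L$-parameter of $\rho \otimes S_{a-1} \otimes S_2$ is $\rho|\cdot|^{1/2} \otimes S_{a-1} \oplus \rho|\cdot|^{-1/2} \otimes S_{a-1}$, which shares the same multiset of twisted characters $\rho|\cdot|^s$ as $\rho \otimes S_a \oplus \rho \otimes S_{a-2}$. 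Performing all such substitutions in parallel, leaving the untouched tempered summands as $\rho \otimes S_m \otimes S_1$, and pairing non-self-dual $\rho$ with its contragredient to preserve self-duality, produces a local Arthur parameter $\psi$ with $\phi_{\psi} = \phi$.

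The main technical obstacle is justifying that the moves furnished by Corollary \ref{cor fiber partition refinement} can be taken to act on pairwise disjoint positions of the partition, so that the proposed Arthur promotions may be performed simultaneously without forcing a single tempered summand to feed two distinct promotions. This disjointness should follow from the requirement that the multiset $\{p_{x_{(\rho,k)}}, p_{y_{(\rho,k)}}\}_{k}$ is extracted as a sub-multiset of the parts of $\underline{p}_{\rho}(\phi^0)$: sharing a position would force the same part to be removed twice. Equivalently, hypothetical overlapping moves (sharing $y_k = x_{k'}$ with $a_{k'} = a_k - 2$) would alter multiplicities in a way that violates the type $X$ parity conditions and hence could not yield a $\underline{p}(\phi) \in \mathcal{P}_X$, contradicting membership of $\phi$ in $\Phi(G_n)_{\lambda}$. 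The exceptional very even type $D$ case would reduce, via Proposition \ref{prop nilpotent fiber}(b), to a verification within a singleton fiber.
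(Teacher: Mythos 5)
Your construction goes in the same direction as the paper's explicit answer (the target $\psi$ you build, with summands $\rho\otimes S_{p_j}\otimes S_1$ and $\rho\otimes S_{p_{x_i}-1}\otimes S_2$, is exactly the parameter appearing in Corollary \ref{cor special case A-type}, and the reduction to isotypic components via Corollary \ref{cor fiber partition refinement} matches the paper), but there is a genuine gap at the decisive step. What your ``Arthur $b=2$ promotion'' produces is a local Arthur parameter $\psi$ with $\lambda_{\psi}=\lambda$ and $\underline{p}(\phi_{\psi})=\underline{p}(\phi)$; it does not show that $\phi_{\psi}=\phi$. An $L$-parameter in $\Phi(G_n)_{\lambda}$ is not determined by its infinitesimal parameter together with its partition: in your own notation $\phi=\bigoplus_k \phi_{i_k}|\cdot|^{s_k}\otimes S_{a_k}$ carries twists $s_k$ that are a priori arbitrary real numbers compatible with $\lambda$, and nothing in your argument pins them down to the values $0,\pm\tfrac12$ occurring in $\phi_{\psi}$ (the example at the end of Section \ref{sec: fibers} exhibits two distinct parameters $\phi_1\neq\phi_2$ with the same $\lambda$ and the same partition, so this is a real phenomenon, not a formality). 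In effect you prove ``every partition in the fiber is realized by some Arthur-type parameter,'' whereas the proposition asserts ``every parameter in the fiber is of Arthur type.''

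Closing this gap is precisely the content of the paper's Lemma \ref{lem key observation tempered} and the induction in Corollary \ref{cor special case A-type}: using temperedness of $\phi^0$ and the at-most-one drop constraint \eqref{eq drop split}, one peels off summands of $\phi$ one at a time and shows each twist must be $0$ or $\pm\tfrac12$ and each $\SL_2$-factor must have the predicted size, so that $\phi$ itself coincides with $\phi_{\psi}$; likewise, for non-self-dual $\rho$ and for self-dual $\rho$ with $\dim\rho>1$ the doubled partitions force $\underline{p_i}=\underline{q_i}$, so no promotion occurs at all in those components (your ``pair with the contragredient'' step is never needed, but this has to be proved, not assumed). By contrast, the obstacle you single out --- simultaneity/disjointness of the moves --- is already built into conditions (b) and (c) of Lemma \ref{lemma nec} (the $p_{x_i}$ are strictly decreasing), and is not where the difficulty lies. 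To repair your proposal you would need an argument that, within this fiber and under the tempered-open hypothesis, the pair $(\lambda,\underline{p}(\phi))$ determines $\phi$ uniquely; that statement is true here but is essentially equivalent to the lemma-plus-induction the paper carries out.
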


When $G_n=\SO_{2n+1}(F)$ or $\Sp_{2n}(F)$, the map from the nilpotent orbits of $\widehat{\mathfrak{g}}_n(\BC)$ to partitions of the corresponding type is a bijection. Therefore, we have

\[ (d_{BV})^{-1}_{\Phi(G_n)_{\lambda}} (\OO')=(d_{BV})^{-1}_{\Phi(G_n)_{\lambda}}(\underline{p}_{\OO'}):=\{ \phi \in \Phi(G_n)_{\lambda} \ | \ d_{BV}(p(\phi))=\underline{p}_{\OO'} \}.\]
However, the above equality fails for $\SO_{2n}(F)$ when $\underline{\mathfrak{p}}=\underline{p}_{\OO'}$ is very even. In this case, we have 
\[(d_{BV})^{-1}_{\Phi(G_n)_{\lambda}}(\underline{\mathfrak{p}}) =(d_{BV})^{-1}_{\Phi(G_n)_{\lambda}}(\OO_{\underline{\mathfrak{p}}}^{I})\sqcup (d_{BV})_{\Phi(G_n)_{\lambda}}^{-1}(\OO_{\underline{\mathfrak{p}}}^{II}).   \]

In any case, Proposition \ref{prop fiber tempered nilpotent orbit} follows from its partition version. 

\begin{prop}\label{prop fiber tempered partition}
    Let $G_n$ be the split group $\SO_{2n+1}(F)$, $\Sp_{2n}(F)$ or $\SO_{2n}(F)$ and $\lambda \in \Lambda(G_n)$. Suppose the unique open $L$-parameter $\phi^0$ of $\Phi(G_n)_{\lambda}$ is tempered and denote $\underline{\mathfrak{p}}:= d_{BV}(\underline{p}(\phi^0))$. Then any $L$-parameter $\phi \in (d_{BV})^{-1}_{\Phi(G_n)_{\lambda}}(\underline{\mathfrak{p}})$ is of Arthur type.
\end{prop}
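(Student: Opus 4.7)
The plan is to construct, for each $\phi \in (d_{BV})^{-1}_{\Phi(G_n)_{\lambda}}(\underline{\mathfrak{p}})$, an explicit local Arthur parameter $\psi \in \Psi(G_n)$ such that $\phi_{\psi}=\phi$, by translating the combinatorial moves in Corollary \ref{cor fiber partition refinement} into replacements of Arthur summands, starting from the canonical Arthur lift $\psi^0 := \phi^0 \otimes S_1$ of the tempered open parameter $\phi^0$.

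First, I would decompose $\phi^0 = \bigoplus_j \tau_j \otimes \bigl( \bigoplus_l S_{m_{j,l}} \bigr)$ according to the distinct bounded irreducible constituents $\tau_j$ of $\phi^0|_{W_F}$, inducing a splitting $\underline{p}(\phi^0) = \bigsqcup_j \underline{p}_j(\phi^0)$. For any $\phi \in \Phi(G_n)_{\lambda}$, every irreducible summand of $\phi$ must be of the form $\tau_j|\cdot|^{s} \otimes S_m$ for some $j$, $s \in \R$, and $m \geq 1$ (by matching the multiset of $\lambda$-eigenvalues), yielding a compatible splitting $\underline{p}(\phi) = \bigsqcup_j \underline{p}_j(\phi)$. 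A block-wise analysis of the Vogan variety, combined with the openness of $\phi^0$, gives the block-wise dominance $\underline{p}_j(\phi^0) \geq \underline{p}_j(\phi)$. Since $d_{BV}(\underline{p}(\phi)) = d_{BV}(\underline{p}(\phi^0)) = \underline{\mathfrak{p}}$, Corollary \ref{cor fiber partition refinement} then supplies, for each $j$, pairs $\{(x_{(j,k)}, y_{(j,k)})\}_{k=1}^{\alpha_j}$ whose simultaneous application takes $\underline{p}_j(\phi^0)$ to $\underline{p}_j(\phi)$ by replacing each $\{v_k + 2, v_k\}$ (where $v_k := p_{j, y_{(j,k)}}$) by $\{v_k + 1, v_k + 1\}$.

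A direct $\lambda$-eigenvalue computation shows that each such partition move is realized on the $L$-parameter side uniquely by the replacement of $\tau_j \otimes S_{v_k + 2} \oplus \tau_j \otimes S_{v_k}$ by $\tau_j|\cdot|^{1/2} \otimes S_{v_k + 1} \oplus \tau_j|\cdot|^{-1/2} \otimes S_{v_k + 1}$: the extreme $\lambda$-exponents on the $\tau_j$-block force the twists to be exactly $\pm 1/2$. I would then define $\psi$ from $\psi^0$ by, for each move, replacing the two Arthur summands $\tau_j \otimes S_{v_k + 2} \otimes S_1$ and $\tau_j \otimes S_{v_k} \otimes S_1$ by the single Arthur summand $\tau_j \otimes S_{v_k + 1} \otimes S_2$, performing the mirror replacement on the $\tau_j^*$-block when $\tau_j$ is not self-dual. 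The identity $\phi_{\tau_j \otimes S_a \otimes S_2} = \tau_j|\cdot|^{1/2} \otimes S_a \oplus \tau_j|\cdot|^{-1/2} \otimes S_a$ then immediately gives $\phi_\psi = \phi$. A parity check confirms that both the original pair $\tau_j \otimes S_{v_k + 2} \otimes S_1 \oplus \tau_j \otimes S_{v_k} \otimes S_1$ and its Arthur replacement $\tau_j \otimes S_{v_k + 1} \otimes S_2$ share the same self-duality sign $\epsilon_{\tau_j}(-1)^{v_k + 1}$ and total dimension $2(v_k + 1)\dim \tau_j$, so $\psi$ remains a valid Arthur parameter for $G_n$.

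The hardest part is expected to be the block-wise dominance $\underline{p}_j(\phi^0) \geq \underline{p}_j(\phi)$: although intuitive (an open parameter ought to be open on each Bernstein block), its rigorous proof requires unpacking how the ordering $\geq_C$ of Proposition \ref{prop open par} restricts to each block for classical groups, together with the additional bookkeeping when non-self-dual constituents $\tau_j$ appear paired with $\tau_j^*$. Once these technicalities and the associated sign and dimension bookkeeping are handled, the construction of $\psi$ is essentially automatic.
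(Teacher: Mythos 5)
Your overall architecture coincides with the paper's: decompose by the irreducible $W_F$-constituents, get blockwise dominance, invoke Corollary \ref{cor fiber partition refinement}, and realize each move $\{v_k+2,v_k\}\rightsquigarrow\{v_k+1,v_k+1\}$ by the Arthur summand $\tau_j\otimes S_{v_k+1}\otimes S_2$. The genuine gap is the sentence ``a direct $\lambda$-eigenvalue computation shows that each such partition move is realized on the $L$-parameter side uniquely by the replacement $\tau_j\otimes S_{v_k+2}\oplus\tau_j\otimes S_{v_k}$ by $\tau_j|\cdot|^{1/2}\otimes S_{v_k+1}\oplus\tau_j|\cdot|^{-1/2}\otimes S_{v_k+1}$.'' That assertion is exactly the hard content of the proposition, and ``extreme $\lambda$-exponents'' alone do not deliver it: knowing only $\underline{p}_j(\phi)$ and $\lambda$, you must show that \emph{every} summand of the block not involved in a move is untwisted, and that among the (typically many) summands sharing the size $v_k+1$ exactly two carry the twists $\pm 1/2$. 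Condition (b) of Lemma \ref{lemma nec} forces repeated and consecutive part-sizes, so the extreme-exponent observation only pins down the topmost summand; below that, ties must be broken. The paper does this in Lemma \ref{lem key observation tempered} together with the induction in Corollary \ref{cor special case A-type}: one peels off the largest summands one at a time (part (a) of the lemma), then at the site of a move counts the multiplicity of the exponent $\rho|\cdot|^{\frac{a-1}{2}}$ in the remaining infinitesimal parameter to see that at least two same-size summands are twisted, applies part (b) to force $|z|=1/2$, and then recurses on a parameter with $\alpha$ decreased by one. Without some version of this induction your ``uniqueness'' claim is unproven, and the final identity $\phi_\psi=\phi$ does not follow.

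Two smaller points. First, the step you single out as hardest, the blockwise dominance $\underline{p}_j(\phi^0)\geq\underline{p}_j(\phi)$, is actually cheap: each block of $\phi^0$ is a tempered, hence open, $L$-parameter of a general linear group with the same blockwise infinitesimal parameter as the corresponding block of $\phi$, so Proposition \ref{prop open par}(a) (which the paper states for $\GL_n(F)$ as well) gives the inequality directly, with no analysis of the Vogan variety of $G_n$. Second, your ``mirror replacement on the $\tau_j^{*}$-block'' never occurs: for non-self-dual $\tau_j$ (and likewise for self-dual $\tau_j$ with $\dim\tau_j>1$) the block partitions are doubled (resp.\ $\dim\tau_j$-fold) multiples, so the bound $0\leq\sum_{z\leq t}p_{j,z}-\sum_{z\leq t}q_{j,z}\leq 1$ of Corollary \ref{cor fiber partition refinement} forces the block partitions to be equal and no moves happen there; only the one-dimensional self-dual blocks can move, and one must still argue (again by the peeling argument) that the unmoved blocks satisfy $\phi_j=\phi_j^0$.
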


Now we give a more explicit description of the partitions $\underline{p}(\phi),$ $\underline{p}^{D}(\psi)$ and $\underline{p}^A(\psi)$. In the case of $\SO_{2n}(F)$, we don't distinguish $\phi$ (resp. $\psi$) and $\phi^{c}$ (resp. $\psi^{c}$), its outer conjugation of $\phi$, since they give the same partition. The parameter $\phi$ (resp. $\psi$) is uniquely determined by its composition with the embedding $\widehat{G}_n(\BC) \hookrightarrow \GL_N(\BC)$, and hence we may write
\begin{align}\label{eq decomp of par}
    \phi= \bigoplus_{i \in I} \rho_i \otimes S_{a_i},\ \  \psi= \bigoplus_{j \in J} \rho_j \otimes S_{a_j} \otimes S_{b_j},
\end{align}
where $\rho_i$'s and $\rho_j$'s are irreducible representations of $W_F$ and $S_a$ is the unique $a$-dimensional irreducible representation of $\SL_2(\BC)$. With this decomposition, we have
\[ \underline{p}(\phi)= \bigsqcup_{i \in I} [a_i^{\dim(\rho_i)}],\  \underline{p}^{D}(\psi)= \bigsqcup_{j \in J} [a_j^{\dim(\rho_j)\cdot b_j }],\ \underline{p}^{A}(\psi)= \bigsqcup_{j \in J} [b_j^{\dim(\rho_j)\cdot a_j }]. \]
Also, with the decomposition \eqref{eq decomp of par}, we have (again composing with $\widehat{G}_n(\BC) \hookrightarrow \GL_N(\BC)$)
\[ \lambda_{\phi}= \bigoplus_{i \in I} \left(\bigoplus_{k=0}^{a_i-1} \rho_i|\cdot|^{\half{a_i-1}-k}\right),\ \ \phi_{\psi}= \bigoplus_{j \in J} \left(\bigoplus_{k=0}^{b_j-1} \rho_j|\cdot|^{\half{b_j-1}-k}\right) \otimes S_{a_j}.\]
In particular, if $\phi$ is tempered and $\rho|\cdot|^{x} \subseteq \lambda_{\phi}$ where $\rho$ is irreducible with bounded image and $x \in \R$, then $x \in \half{1}\Z$.

In the following discussion, we shall treat $\phi$ (resp. $\psi$) as a self-dual $L$-parameter (resp. local Arthur parameter) of some $\GL_N(F)$. We often consider subrepresentation $\phi'$ of $\phi$. We denote $\phi\ominus\phi'$ the subrepresentation of $\phi$ such that $\phi= \phi' \oplus (\phi \ominus\phi').$

The following lemma is the key observation towards Proposition \ref{prop fiber tempered partition}.
\begin{lemma}\label{lem key observation tempered}
Let $\phi^0$ be a tempered self-dual $L$-parameter of $\GL_N(F)$ and denote $\lambda= \lambda_{\phi^0}$. Suppose $\phi \in \Phi(\GL_N(F))_{\lambda}$ is self-dual, and $\phi^0$, $\phi$ have decompositions 
\begin{align*}
    \phi^0= \phi^0_1\oplus \phi^{0}_2,\ \phi= \phi_1\oplus \rho|\cdot|^{z}\otimes S_{a} \oplus \phi_2,
\end{align*}
where 
\begin{enumerate}
    \item [(i)]$z \in \R$ and $\rho$ is irreducible with bounded image,
    \item [(ii)]$\lambda_{\phi_1^0}= \lambda_{\phi_1}$ is self-dual.
\end{enumerate}
Then the followings hold.
\begin{enumerate}
    \item [(a)] Suppose further that any irreducible summand of $\phi_2^0$ has dimension less than or equal to 
    \[\dim(\rho|\cdot|^z \otimes S_a)=\dim(\rho)\cdot a.\]
    Then $z=0$, and $ \phi_2^0 \supseteq \rho \otimes S_{a}$.
    \item [(b)] Suppose further that $z\neq 0$ and any irreducible summand of $\phi_2^0$ has dimension less than or equal to 
    \[\dim (\rho \otimes S_{a+1})=\dim(\rho)\cdot (a+1).\] Then $|z|=1/2$, and $\phi_2^0 \supseteq \rho \otimes S_{a+1}$.
\end{enumerate}
\end{lemma}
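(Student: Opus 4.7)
The plan is to compare infinitesimal parameters on the two sides of the decompositions, exploiting the temperedness of $\phi^0$ to constrain tightly the exponents that can occur in each irreducible summand of $\phi_2^0$. From $\lambda_{\phi^0}=\lambda_{\phi}$ together with the hypothesis $\lambda_{\phi_1^0}=\lambda_{\phi_1}$, subtraction gives the key identity
\[
\lambda_{\phi_2^0} \;=\; \lambda_{\rho|\cdot|^{z}\otimes S_a}+\lambda_{\phi_2} \;=\; \bigoplus_{k=0}^{a-1}\rho|\cdot|^{z+\half{a-1}-k}\;+\;\lambda_{\phi_2}.
\]
In particular both extreme characters $\rho|\cdot|^{z+\half{a-1}}$ and $\rho|\cdot|^{z-\half{a-1}}$ must occur in $\lambda_{\phi_2^0}$. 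Since $\phi_2^0\subseteq\phi^0$ is tempered, every irreducible summand has the form $\rho_j\otimes S_{c_j}$ with $\rho_j$ bounded, and contributes to its infinitesimal parameter the characters $\rho_j|\cdot|^{\half{c_j-1}-k}$ for $0\le k\le c_j-1$, whose real parts lie in $[-\half{c_j-1},\half{c_j-1}]$. Therefore the occurrence of $\rho|\cdot|^{z+\half{a-1}}$ in $\lambda_{\phi_2^0}$ forces a summand $\rho\otimes S_c\subseteq\phi_2^0$ with $z+\half{a-1}\le\half{c-1}$, and symmetrically a summand $\rho\otimes S_{c'}\subseteq\phi_2^0$ with $z-\half{a-1}\ge -\half{c'-1}$.

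For Part (a), the hypothesis on the dimensions of summands of $\phi_2^0$ forces $c,c'\le a$, yielding $z\le 0$ and $z\ge 0$; hence $z=0$. Feeding $z=0$ back, the character $\rho|\cdot|^{\half{a-1}}$ can only appear inside an $S_c$-summand with $c\ge a$, so $c=a$ and $\rho\otimes S_a\subseteq\phi_2^0$. For Part (b), the first observation is that temperedness of $\phi^0$ forces every exponent occurring in $\lambda_{\phi^0}$, and therefore every $z+\half{a-1}-k$, to lie in $\half{1}\Z$, so $z\in\half{1}\Z$; this is the remark recorded at the end of \S\ref{sec par}. The dimension bounds now give $c,c'\le a+1$, hence $|z|\le\half{1}$, and together with $z\ne 0$ this pins down $|z|=\half{1}$. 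Substituting back, the extreme character $\rho|\cdot|^{z+\half{a-1}}=\rho|\cdot|^{\pm a/2}$ fits inside an $S_c$-summand of $\phi_2^0$ only when $c=a+1$, so $\rho\otimes S_{a+1}\subseteq\phi_2^0$.

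The argument is essentially careful bookkeeping with extreme exponents, so there is no deep obstacle; the one step deserving attention is the half-integrality of $z$ in Part (b), which genuinely uses that $\phi^0$ is tempered and would fail without it. Once this is established, the two extreme-exponent inequalities immediately box in $z$, and the short case analyses produce the required $S_a$- or $S_{a+1}$-summand inside $\phi_2^0$.
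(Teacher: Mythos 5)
Your argument is correct and essentially the paper's: both compare $\lambda_{\phi_2^0}$ with $\lambda_{\rho|\cdot|^{z}\otimes S_a}\oplus\lambda_{\phi_2}$, use temperedness to force a summand $\rho\otimes S_b\subseteq\phi_2^0$ whose exponent range contains the extreme exponent of $\rho|\cdot|^{z}\otimes S_a$, and then squeeze $b$ with the dimension bound; the only difference is that the paper first normalizes $z\geq 0$ via self-duality of $\phi$ and works with the top exponent alone, whereas you track both extremes $z\pm\half{a-1}$ and so never need that reduction. One small slip in the last step of (b): when $z=-1/2$ the top exponent is $z+\half{a-1}=\half{a-2}$, not $-a/2$, so the final pinning of $c=a+1$ must be run with the bottom extreme character $\rho|\cdot|^{z-\half{a-1}}=\rho|\cdot|^{-a/2}$ (which your symmetric setup already provides); with that one-line adjustment the proof is complete.
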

\begin{proof}
    Since $\lambda_{\phi^0}=\lambda= \lambda_{\phi}$ and $\lambda_{\phi_1^0}= \lambda_{\phi_1}$ by (ii), we see that
    \[ \lambda_{\phi_2^0} =\lambda_{\rho |\cdot|^{z} \otimes S_a \oplus \phi_2} \supseteq \lambda_{ \rho |\cdot|^{z} \otimes S_a }= \bigoplus_{i=0}^{a-1} \rho |\cdot|^{z+\half{a-1}-i}.\]
    This implies $z \in \half{1}\Z$.
    Since $\phi$ is self-dual, replacing $\rho$ by $\rho^{\vee}$, the dual of $\rho$, if necessary, we may assume $z \geq 0$. Then $\lambda_{\phi_2^0} \supseteq \rho |\cdot|^{z+\half{a-1}}$ implies that $\phi_2^0$ must contain an irreducible summand of the form $\rho \otimes S_{b}$ with $b \geq 2(z+ \half{a-1})+1$ (and $b \equiv 2z+ a \mod 2$). 
    
    For Part (a), the assumption gives
    \[ a \leq 2\left(z + \half{a-1}\right)+1 \leq b \leq a,\]
    which implies $z=0$ and $b=a$. For Part (b), the assumption implies $z \geq 1/2$ and
    \[ a+1 \leq 2\left(z + \half{a-1}\right)+1 \leq b \leq a+1.  \]
    Thus $b=a+1$ and $z=1/2$. This completes the proof of the lemma.   
\end{proof}

As a corollary, we prove a special case of Proposition \ref{prop fiber tempered partition} using Lemma \ref{lemma nec}.

\begin{cor}\label{cor special case A-type}
    Let $\phi^0$ be a tempered self-dual $L$-parameter of $\GL_N(F)$ of the form
    \begin{align*}
        \phi^0= \rho \otimes \left(\bigoplus_{ j=1 }^r S_{p_j}\right),
    \end{align*}
    where $\rho$ is one-dimensional, self-dual and the sequence $(p_1,\dots, p_r)$ is non-increasing. Denote $\lambda= \lambda_{\phi^0}$. Suppose $\phi \in \Phi(\GL_N(F))_{\lambda}$ is self-dual and $\underline{q}:=\underline{p}(\phi)$ can be obtained from $\underline{p}=\underline{p}(\phi^0)= [p_1,\dots, p_r]$ by replacing $\{p_{x_i}, p_{y_i}\}_{i=1}^{\alpha}$ in $\underline{p}$ with $\{p_{x_i}-1, p_{y_i}+1\}_{i=1}^{\alpha}$, where the sequence $\{(x_i,y_i)\}_{i=1}^{\alpha}$ satisfies Conditions (a), (b) and (c) in Lemma \ref{lemma nec} for $\underline{p}$. Then $\phi$ is of Arthur type. More explicitly, let $J:=\{1,\dots, r\} \setminus (\{x_1,\dots, x_{\alpha}\}\sqcup\{y_1,\dots, y_{\alpha}\})$, then $\phi=\phi_{\psi}$, where
    \[\psi= \bigoplus_{j \in J} \rho \otimes S_{p_j}\otimes S_1 \oplus \bigoplus_{i=1}^{\alpha} \rho \otimes S_{p_{x_i}-1}\otimes S_2. \]   
\end{cor}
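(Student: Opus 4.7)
The plan is to induct on $N = |\underline{p}|$; the base case $N = 0$ is vacuous. For the inductive step, we split into two cases according to whether $1 \in J$ or $1 = x_1$. These are exhaustive: Condition~(a) of Lemma~\ref{lemma nec} gives $y_i > x_i \geq 1$, so $1 \neq y_i$ for any $i$; and if $1 = x_i$ for some $i$, then $p_{x_i} = p_1$ is maximal among the $p_{x_j}$'s, forcing $i = 1$ by Condition~(c).

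In the first case ($1 \in J$), position $1$ is unmodified and every other modification produces a value at most $p_1 - 1$, so $q_1 = p_1$. Pick any summand $\rho|\cdot|^z \otimes S_{p_1}$ of $\phi$ and apply Lemma~\ref{lem key observation tempered}(a) with $\phi_1 = \phi_1^0 = 0$, $\phi_2^0 = \phi^0$, and $a = p_1$: since every summand of $\phi^0$ has dimension at most $p_1$, we conclude $z = 0$ and $\rho \otimes S_{p_1} \subseteq \phi^0$. Peeling $\rho \otimes S_{p_1}$ from both $\phi$ and $\phi^0$ yields a smaller instance of the corollary, with $\underline{p}$ replaced by $[p_2, \dots, p_r]$ and the same replacement pairs (indices shifted by $-1$), and the inductive hypothesis closes the case.

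In the second case ($1 = x_1$), Condition~(b) gives $p_2 = p_1 - 1$ and the reduced partition satisfies $q_1 = p_1 - 1$. The heart of the proof is to establish that $\phi$ contains $\rho|\cdot|^{1/2} \otimes S_{p_1 - 1}$ (and, by self-duality, its dual) as a direct summand. The weight $\rho|\cdot|^{(p_1 - 1)/2}$ has multiplicity $m_1 := |\{j : p_j = p_1\}|$ in $\lambda$; any summand $\rho|\cdot|^z \otimes S_a$ of $\phi$ contributing to it must satisfy $z + (a-1)/2 = (p_1 - 1)/2$, which, combined with $a \leq q_1 = p_1 - 1$, forces $z \geq 1/2$. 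A careful multiplicity comparison on the top few weights of $\lambda$—facilitated by the block decomposition supplied by Corollary~\ref{cor fiber partition refinement}—pins down the existence of a size-$(p_1 - 1)$ summand with twist exactly $1/2$. Lemma~\ref{lem key observation tempered}(b) then confirms both $|z| = 1/2$ and $\rho \otimes S_{p_1} \subseteq \phi^0$. Peeling $\rho|\cdot|^{\pm 1/2} \otimes S_{p_1 - 1}$ from $\phi$ and $\rho \otimes S_{p_1} \oplus \rho \otimes S_{p_{y_1}}$ from $\phi^0$ preserves the equality of infinitesimal parameters—a direct weight-by-weight comparison shows that both removals contribute the identical multiset to $\lambda$—so the inductive hypothesis applies to the reduced data.

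The main obstacle is the existence claim in the second case. The weight-counting immediately produces some summand with $z \geq 1/2$, but ruling out alternative configurations (in which the top weight is supplied by smaller-$S$-size summands with larger twists while every size-$(p_1 - 1)$ summand carries twist $0$) requires a finer multiplicity analysis. The block decomposition of $(\underline{p}, \underline{q})$ into the singleton blocks $\{[p_j]\}_{j \in J}$ and the pair blocks $\{[p_{x_i}, p_{y_i}] \to [(p_{x_i} - 1)^2]\}_{i=1}^{\alpha}$ provided by Corollary~\ref{cor fiber partition refinement} reduces this weight-counting to a block-by-block computation. A secondary bookkeeping subtlety—the case where a later pair's plateau position equals $y_1$, so that removing position $y_1$ disrupts that plateau structure—is handled analogously via the same block decomposition.
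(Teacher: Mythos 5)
Your Case 1 (the index $1$ lies in $J$) is fine, and your toolkit is the right one, but the proposal has a genuine gap exactly where you flag it: the existence, in Case 2 ($x_1=1$), of a summand $\rho|\cdot|^{\pm 1/2}\otimes S_{p_1-1}$ of $\phi$. The top-weight count only shows that the exponent $(p_1-1)/2$ is covered by \emph{some} summand $\rho|\cdot|^{z}\otimes S_a$ with $z\geq (p_1-a)/2\geq 1/2$, and Lemma \ref{lem key observation tempered}(b) cannot be applied to such a summand unless $a\geq p_1-1$, since its hypothesis requires every summand of $\phi_2^0$ to have dimension at most $a+1$ while $\phi^0$ contains $\rho\otimes S_{p_1}$. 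So configurations in which every size-$(p_1-1)$ summand is untwisted and the top weight is supplied by a smaller summand with a larger twist (e.g.\ $\rho|\cdot|^{1}\otimes S_{p_1-2}$ when $p_1-2$ occurs in $\underline{q}$) are not excluded by anything you have written. Invoking Corollary \ref{cor fiber partition refinement} does not close this: that corollary is a purely combinatorial statement about how the replacement structure of $(\underline{p},\underline{q})$ refines along a given splitting of the \emph{partitions}; it says nothing about how the twists $z$ are distributed among the summands of $\phi$, which is precisely what must be controlled. In short, the central analytic step is named but not proved, and also a small slip: condition (b) gives $p_2=p_1-2$ (not $p_1-1$) when $y_1=x_1+1$.

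For comparison, the paper resolves exactly this point by a different multiplicity count and a different induction. It inducts on the number of pairs $\alpha$ rather than on $N$: first it peels the unmodified prefix $\phi_1=\rho\otimes(S_{p_1}\oplus\cdots\oplus S_{p_{x_1-1}})$ by iterating Lemma \ref{lem key observation tempered}(a); then, with $a=p_{x_1}-1$ and $m$ the multiplicity of $a$ in $\underline{q}$, it compares the multiplicity of the exponent $(a-1)/2$ in $\lambda_{\phi\ominus\phi_1}$ and $\lambda_{\phi^0\ominus\phi_1^0}$ (which is $m-2$) to conclude that at least two of the size-$a$ summands of $\phi\ominus\phi_1$ carry nonzero twist. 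Now Lemma \ref{lem key observation tempered}(b) \emph{does} apply to such a size-$a$ summand, because $a+1=p_{x_1}$ bounds the dimensions of all remaining summands of $\phi^0\ominus\phi_1^0$, giving twist exactly $\pm 1/2$. One then replaces $\rho|\cdot|^{1/2}\otimes S_a\oplus\rho|\cdot|^{-1/2}\otimes S_a$ by $\rho\otimes S_{a+1}\oplus\rho\otimes S_{a-1}$, uses Lemma (a) again to force the remaining size-$a$ twists to vanish, and obtains a parameter $\phi'$ with $\alpha(\phi',\phi^0)=\alpha-1$, to which the inductive hypothesis applies. If you want to keep your peel-one-part induction on $N$, you would still need to import this second-from-top weight count (at exponent $(a-1)/2$, not at the top weight) to produce the twisted size-$(p_1-1)$ summand; as written, your proposal does not contain that argument.
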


\begin{proof}
We apply induction on $\alpha= \alpha(\phi, \phi^0)$. When $\alpha=0$, we have $\phi=\phi^0$ and the conclusion trivially holds. Now we assume $\alpha(\phi,\phi^0)=k>0$ and the conclusion is verified for any $\phi'$ with $\alpha(\phi',\phi^0)<k$.

Since $\phi^0$ contains $\rho\otimes (S_{p_1}\oplus \cdots\oplus S_{p_{i}})$ for $1 \leq i \leq x_1-1$, inductively applying Lemma \ref{lem key observation tempered}(a), we see that $\phi$ also contains $\rho \otimes (S_{p_1} \oplus\dots\oplus S_{p_{x_1-1}})$. Denote $\phi_1=\phi_1^0:= \rho \otimes (S_{p_1} \oplus\dots \oplus S_{p_{x_1-1}})$.

Denote $a:= p_{x_1-1}$ for simplicity and let $m$ be the multiplicity of $a$ in $\underline{p}(\phi)$, which is greater than or equal to 2 by assumption. Write
\begin{align*}
    \phi \ominus \phi_1 &\supseteq \bigoplus_{j=1}^m \rho|\cdot|^{z_j} \otimes S_{a},\\
    \phi^0 \ominus \phi_1^0 &=\rho\otimes S_{a+1}\oplus (\rho \otimes S_{a})^{\oplus (m-2)} \oplus \bigoplus_{i=y_1}^{r} \rho \otimes S_{p_i}.
\end{align*}
(The $\oplus (m-2)$ of $(\rho \otimes S_{a})^{\oplus (m-2)}$ means the multiplicity.) Since the multiplicity of $\rho|\cdot|^{\half{a-1}}$ in $\lambda_{\phi \ominus \phi_1}$ is the same as that of $\lambda_{\phi^0 \ominus \phi_1}$, which is exactly $m-2$, we see that at least 2 of $z_j$'s are not zero. Now we may write
\[ \phi^0= \phi_1^0 \oplus \phi_2^0,\ \phi=\phi_1 \oplus \rho|\cdot|^{z} \otimes S_{a} \oplus \phi_2 \]
for some $z \neq 0$. Applying Lemma \ref{lem key observation tempered}(b), we obtain that $|z|=1/2$. Since $\phi$ is self-dual, we may rewrite
\begin{align*}
    \phi^0&= (\phi_1^0\oplus \rho\otimes S_{a+1} \oplus \rho\otimes S_{a-1}) \oplus \widetilde{\phi_2^0},\\ 
    \phi&= (\phi_1\oplus \rho|\cdot|^{\half{1}}\otimes S_{a} \oplus \rho|\cdot|^{\half{-1}}\otimes S_a)  \oplus \left(\bigoplus_{j=1}^{m-2} \rho|\cdot|^{z_j} \otimes S_{a} \right)\oplus \widetilde{\phi_2}.
\end{align*}
Applying Lemma \ref{lem key observation tempered}(a) inductively again, we see that $\{z_j\}_{j=1}^{m-2}$ are all zero. Finally, let
\[ \phi':= (\phi_1 \oplus \rho\otimes S_{a+1} \oplus \rho\otimes S_{a-1} \oplus (\rho\otimes S_{a})^{\oplus (m-2)})  \oplus \widetilde{\phi_2}.  \]
It is not hard to see that $\alpha(\phi',\phi^0)= \alpha(\phi,\phi^0)-1$. Then the induction hypothesis for the pair $(\phi',\phi^0)$ gives $\phi'= \phi_{\psi'}$, where we let $J'=\{1,\dots, r\} \setminus ( \{x_2,\dots,x_{\alpha}\} \sqcup \{y_2,\dots, y_{\alpha}\})$, and
\[  \psi'= \bigoplus_{j \in  J'  } \rho \otimes S_{p_j}\otimes S_1 \oplus \bigoplus_{i=2 }^{\alpha} \rho \otimes S_{p_{x_i}-1}\otimes S_2. \]
Comparing $\phi$ and $\phi'$, we get the desired conclusion. This completes the proof of the corollary.  
\end{proof}

Now we prove Proposition \ref{prop fiber tempered partition}.

\begin{proof} We may write
\begin{align*}
    \phi^0= \bigoplus_{i \in I_{nsd}} (\rho_i \oplus \rho_i^{\vee}) \otimes (\oplus_{j\in J_i} S_{a_j} ) \oplus \bigoplus_{i \in I_{sd}} \rho_i \otimes (\oplus_{j\in J_i} S_{a_j} ),
\end{align*}
    where $\rho_i \not\cong \rho_i^{\vee}$ for $i \in I_{nsd}$ and $\rho_i \cong \rho_i^{\vee}$ for $i \in I_{sd}$, and $\rho_{i_1} \not\cong \rho_{i_2}\not\cong \rho_{i_1}^{\vee}$ for $i_1\neq i_2 \in I_{nsd}$ and $\rho_{i_1} \not\cong \rho_{i_2}$ for $i_1\neq i_2 \in I_{sd}$. Let $I=I_{nsd} \sqcup I_{sd}$ and for $i \in I$, denote
    \begin{align*}
        \phi_i^0:= \begin{cases}
             (\rho_i \oplus \rho_i^{\vee}) \otimes (\oplus_{j\in J_i} S_{a_j} ) & \text{ if }i \in I_{nsd},\\ 
             \rho_i \otimes (\oplus_{j\in J_i} S_{a_j} ) & \text{ if }i \in I_{sd},
        \end{cases}
    \end{align*}
    which we regard as a self-dual tempered $L$-parameter of $\GL_{n_i}(F)$ that factors through $\Sp_{n_i}(\BC)$ if $G_n=\SO_{2n+1}(F)$, and factors through $\mathrm{O}_{n_i}(\BC)$ if $G_n= \Sp_{2n}(F)$ or $\SO_{2n}(F)$.

    Suppose $\phi \in (d_{BV})^{-1}_{\Phi(G_n)_{\lambda}}(\underline{\mathfrak{p}})$, where $\underline{\mathfrak{p}}= d_{BV}( \underline{p}(\phi^0))$. Then we have a decomposition
    \[ \phi= \bigoplus_{i \in I} \phi_i,\]
    where $\lambda_{\phi_i}= \lambda_{\phi_i^0}$. Write
    \[\underline{p_i}:= \underline{p}(\phi_i^0)=[p_{i,1},\dots, p_{i,r_i}],\ \underline{q_i}= \underline{p}(\phi_i)=[q_{i,1},\dots, q_{i,s_i}].\]
    For each $i\in I$, we have $\underline{p_i} \geq \underline{q_i}$ by Proposition \ref{prop open par}(a) since $\phi_i^0$ is a tempered $L$-parameter of $\GL_{n_i}(F)$. Therefore, Corollary \ref{cor fiber partition refinement} implies that for any $i \in I$ and $1 \leq t \leq r_i$, 
    \begin{align}\label{eq can not drop too much}
        0 \leq \varepsilon(\underline{p_i}, \underline{q_i},t):=\sum_{j=1}^t p_{i,j} - \sum_{j=1}^t q_{i,j} \leq 1.
    \end{align}
    For $i \in I_{nsd}$, we may write $\underline{p_i}=\underline{p_i}'\sqcup \underline{p_i}'$ and $\underline{q_i}=\underline{q_i}'\sqcup \underline{q_i}'$. Therefore, for $1 \leq t \leq r_i/2$, if we define $\varepsilon(\underline{p_i}', \underline{q_i}',t)$ similarly, then
    \[\varepsilon(\underline{p_i}, \underline{q_i},2t)= 2 \cdot \varepsilon(\underline{p_i}', \underline{q_i}',t),\]
    and hence \eqref{eq can not drop too much} implies $\varepsilon(\underline{p_i}', \underline{q_i}',t)=0$. We conclude that $\underline{p_i}'= \underline{q_i}'$ and hence $\underline{p_i}=\underline{q_i}$. We take $\psi_i=\phi_i^0 \otimes S_1$ in this case.
    
    For $i \in I_{sd}$ such that $\dim(\rho)>1$, we have \[\underline{p_i}= \underbrace{\underline{p_i}'\sqcup \cdots \sqcup \underline{p_i}'}_{\dim(\rho) \textrm{ copies}},  \ \ \ \underline{q_i}= \underbrace{\underline{q_i}'\sqcup \cdots \sqcup \underline{q_i}'}_{\dim(\rho) \textrm{ copies}},\]
    and hence the same argument shows that $\underline{p_i}=\underline{q_i}$. We take $\psi_i=\phi_i^0 \otimes S_1$ in this case.

    Finally, for $i \in I_{sd}$ such that $\dim(\rho)=1$, Corollary \ref{cor fiber partition refinement} implies that the pair $(\phi_i^0,\phi_i)$ satisfies the assumption of Corollary \ref{cor special case A-type}, and hence there is a $\psi_i$ such that $\phi_i=\phi_{\psi_i}$. 
    
    In conclusion, we have constructed  $\psi= \bigoplus_{i \in I} \psi_i$ so that $\phi=\phi_{\psi}$. This completes the proof of the proposition.
\end{proof}

The proof above also gives the following corollary, which will be used in \cite{HLLS23}.

\begin{cor}
    Suppose $\lambda \in \Lambda(G_n)$ has a decomposition
    \[ \lambda=\bigoplus_{i \in I} \rho_i |\cdot|^{x_i},\]
    where $\rho_i$'s are irreducible representations of $W_F$ with bounded image, and $x_i \in \R$. Suppose further that $\rho_i$'s are either non-self-dual or $\dim(\rho_i)>1$. Then for any $\phi, \phi' \in \Phi(G_n)_{\lambda} $ such that $\underline{p}(\phi) \geq \underline{p}(\phi')$, we have $d_{BV}(\underline{p}(\phi))= d_{BV}(\underline{p}(\phi'))$ if and only if $\phi=\phi'$.
\end{cor}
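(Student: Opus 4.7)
The forward implication is immediate, so I would focus on proving that $d_{BV}(\underline{p}(\phi)) = d_{BV}(\underline{p}(\phi'))$ forces $\phi = \phi'$. The plan is to replicate the block-by-block analysis in the proof of Proposition \ref{prop fiber tempered partition}, with $\phi'$ playing the role of the closed parameter there. Specifically, I would partition the cuspidal supports into blocks by pairing each non-self-dual $\rho_i$ with $\rho_i^{\vee}$ and isolating each self-dual $\rho_i$, then decompose $\phi = \bigoplus_{i \in I} \phi_i$ and $\phi' = \bigoplus_{i \in I} \phi'_i$ block-wise. The disjointness of block supports forces $\lambda_{\phi_i} = \lambda_{\phi'_i}$, and together with the global dominance inequality one obtains $\underline{p_i} := \underline{p}(\phi_i) \geq \underline{p}(\phi'_i) =: \underline{q_i}$ in each block.

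Next, I would apply Corollary \ref{cor fiber partition refinement} to the decomposition $\underline{p}(\phi) = \bigsqcup_i \underline{p_i}$ and $\underline{p}(\phi') = \bigsqcup_i \underline{q_i}$ to obtain
\[
0 \;\leq\; \sum_{j=1}^{t} p_{i,j} \;-\; \sum_{j=1}^{t} q_{i,j} \;\leq\; 1
\]
for every block $i$ and every admissible $t$. The hypothesis on $\rho_i$ now collapses these inequalities block-wise. When $\rho_i$ is non-self-dual, both $\underline{p_i}$ and $\underline{q_i}$ are doubled in the form $\underline{p_i}' \sqcup \underline{p_i}'$ and $\underline{q_i}' \sqcup \underline{q_i}'$, so the partial-sum drop is an even integer in $\{0,1\}$ and thus zero. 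When $\rho_i$ is self-dual with $\dim(\rho_i) > 1$, the same drop is a multiple of $\dim(\rho_i) \geq 2$ and again forced to zero. In either case, $\underline{p_i} = \underline{q_i}$.

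It remains to upgrade block-wise equality of partitions to block-wise equality of $L$-parameters. The point is that excluding self-dual $\rho_i$ of dimension one rules out precisely the Vogan-variety ambiguity where distinct orbits can share the same Jordan type on a common infinitesimal parameter; in every block permitted by the hypothesis, the partition together with $\lambda_{\phi_i}$ rigidifies the segment decomposition and hence determines $\phi_i$ up to $\widehat{G}_n(\BC)$-conjugacy. I expect this last rigidity step to be the main technical obstacle, and would handle it by the same unambiguous block-wise construction used at the end of the proof of Proposition \ref{prop fiber tempered partition}, where in the non-self-dual and self-dual $\dim > 1$ cases the Arthur-parameter data was read off directly from $\phi_i^0$. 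Once that is in place, assembling $\phi = \bigoplus_i \phi_i = \bigoplus_i \phi'_i = \phi'$ completes the argument.
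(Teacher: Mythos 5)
Your overall strategy --- block decomposition according to the cuspidal supports, Corollary \ref{cor fiber partition refinement}, and the divisibility argument forcing the blockwise partial-sum drops to vanish --- is exactly the route the paper intends (it deduces this corollary directly from the proof of Proposition \ref{prop fiber tempered partition}), and that middle portion of your argument is fine. But two steps are not closed. First, you assert that the blockwise dominance $\underline{p}(\phi_i)\geq\underline{p}(\phi'_i)$ follows from the global inequality together with $\lambda_{\phi_i}=\lambda_{\phi'_i}$. That inference is not automatic: dominance between disjoint unions does not descend to the pieces, and two parameters with the same blockwise infinitesimal parameter need not even have comparable partitions (e.g.\ $[6,1,1]$ versus $[4,4]$ occur over a common block support). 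In the proof of Proposition \ref{prop fiber tempered partition} the blockwise inequality is supplied by Proposition \ref{prop open par}(a) precisely because the comparison parameter $\phi_i^0$ there is tempered, hence open in its block; here neither $\phi_i$ nor $\phi'_i$ is tempered, so this hypothesis of Corollary \ref{cor fiber partition refinement} needs another source (for instance the closure ordering $\geq_C$, which does decompose along blocks).

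Second, and more seriously, your final rigidity step fails as stated: for a fixed blockwise infinitesimal parameter, the partition does not determine the parameter, even in a non-self-dual block. Take $\rho$ non-self-dual and one-dimensional and compare the $\rho$-parts $S_4\oplus S_2\oplus|\cdot|^{5/2}\oplus|\cdot|^{-5/2}$ and $|\cdot|^{1}\otimes S_4\oplus|\cdot|^{-2}\otimes S_2\oplus|\cdot|^{1/2}\oplus|\cdot|^{-1/2}$, each completed by its dual on the $\rho^{\vee}$-side: they have the same multiset of exponents and the same partition $[4,2,1,1]$, yet they are distinct; completing them to self-dual parameters gives $\phi\neq\phi'$ in a common $\Phi(G_n)_{\lambda}$ (e.g.\ for $\SO_{17}(F)$) with $\underline{p}(\phi)=\underline{p}(\phi')$, hence equal $d_{BV}$. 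So the claim that ``the partition together with $\lambda_{\phi_i}$ rigidifies the segment decomposition'' is false, and the appeal to the end of the proof of Proposition \ref{prop fiber tempered partition} does not repair it: there the upgrade from equality of partitions to equality of parameters is legitimate only because one of the two parameters being compared is the tempered, i.e.\ open, member of its block, so that the strict compatibility of the closure ordering with partition dominance from \cite{HLLZ22} applies. For two arbitrary parameters no argument using only the partitions can force $\phi_i=\phi'_i$; some additional rigidity input --- comparing both against the open parameter of the block, or working with the closure ordering $\geq_C$ rather than with partition dominance alone --- is unavoidable, and your proposal supplies none, so the last step of the argument does not go through.
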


We end this subsection by demonstrating an example that the set $(d_{BV})_{\Phi(G_n)_{\lambda_{\phi}}}^{-1}(d_{BV}( \underline{p}(\phi)))$ may contain an $L$-parameter not of Arthur type when $\phi$ is of Arthur type but not tempered.

\begin{exmp}
    Let $G_n=\SO_{11}(F)$. Consider 
    \begin{align*}
        \psi&= 1 \otimes S_{2} \otimes S_3+1 \otimes S_{4} \otimes S_1,\\
        \phi_{\psi}&= 1 \otimes S_4+ 1 \otimes S_2+ |\cdot|^{1/2} \otimes S_2 +|\cdot|^{-1/2} \otimes S_2.
    \end{align*}
    We have $\underline{p}^D(\psi)=\underline{p}(\phi_{\psi})=[4,2^3]$, and we set $\underline{\mathfrak{p}}= d_{BV}( [4,2^3])=[5,3,1^3]$. Then $(d_{BV})_{\Phi(G_n)_{\lambda}}^{-1}(\underline{\mathfrak{p}})=\{ \phi_{\psi}, \phi_1, \phi_2 \},$ where
    \begin{align*}
        \phi_1&= 1 \otimes S_4+ 1 \otimes S_2+ 1 \otimes S_2 +|\cdot|^{3/2} \otimes S_1+|\cdot|^{3/2} \otimes S_1,\\
        \phi_2&=1 \otimes S_4+ |\cdot|^{1} \otimes S_2+ |\cdot|^{-1} \otimes S_2 +|\cdot|^{1/2} \otimes S_1+|\cdot|^{-1/2} \otimes S_1,
    \end{align*}
    are both not of Arthur type.
\end{exmp}

\section{Unipotent representations with real infinitesimal character}\label{sec: main thm}
In this section, we apply the results in Section \ref{sec fiber L-par} to prove the conjecture for weak local Arthur packets of basic local Arthur parameters. We assume the residue field of $F$ has sufficiently large characteristic throughout the section.

First, we recall the definition of \emph{weak local Arthur packets} for \emph{basic local Arthur parameters}.

\begin{defn}
Let $\mathrm{G}$ be a connected reductive group and $G=\mathrm{G}(F)$. Assume that there is a local Arthur packets theory for $G$ as conjectured in \cite[Conjecture 6.1]{Art89}.
We say $\psi \in \Psi(G)$ is \emph{basic} if $\psi|_{W_F \times \SL_2^D(\BC)}$ is trivial. For each basic local Arthur parameter $\psi_0$ of $G$, we define the weak local Arthur packet associated with $\psi_0$ by
    \begin{align}\label{eq weak packet leq}
    \Pi_{\psi_0}^{\textrm{Weak}}:= \{ \pi \in \Pi(G)_{\lambda_{\psi_0}} \ | \ \WF(\pi) \leq d_{BV}( \OO^A_{\psi_0} )  \}.
    \end{align}
\end{defn}

We say an unramified infinitesimal parameter $\lambda$ is real if after composing with an embedding ${}^L\mathrm{G} \to {}^L\GL_N$, it decomposes as
\[ \lambda= \bigoplus_{i \in I} |\cdot|^{x_i},\]
where $x_i$ are all real numbers. For $\psi \in \Psi(G_n)$, it is not hard to see that $\lambda_{\psi}$ is unramified (resp. real) if and only if $\psi|_{I_F}$ (resp. $\psi|_{W_F}$) is trivial. In particular, if $\psi$ is basic, then $\lambda_{\psi}$ is real.

To proceed, we recall the following result from \cite{CMO23}.

\begin{thm}[{\cite[Theorem 1.4.1]{CMO23}}]\label{thm computation of WF}
Let $\RG$ be a connected reductive algebraic group defined over $F$, inner to split. Assume that the residue field of $F$ has sufficiently large characteristic. Suppose $\pi$ is a unipotent representation of $\RG(F)$ with real infinitesimal parameter. Then the geometric wavefront set of $\pi$ is a singleton, and
\begin{align}\label{eq CMO computation of WF}
    \WF(\pi)=\{ d_{BV}( \OO_{\phi_{\widehat{\pi}} } ) \},
\end{align}
where $\OO_{\phi_{\widehat{\pi}}}$ is a nilpotent orbit of $\widehat{\mathfrak{g}}(\BC) $ associated to the $L$-parameter $\phi_{\widehat{\pi}}$ (see Definition \ref{def orbit}).
\end{thm}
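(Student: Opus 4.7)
The plan is to follow the strategy of Ciubotaru-Mason-Brown-Okada: reduce the computation of the geometric wavefront set to that of Okada's canonical unramified wavefront set $\underline{\mathrm{WF}}(\pi)$, which is accessible from the Deligne-Langlands-Lusztig parameter, and then identify the answer with the stated right-hand side. First, I would establish a comparison theorem of the form $\WF(\pi) = \{\mathrm{spread}(\OO) \mid \OO \in \underline{\mathrm{WF}}(\pi)\}$ and show that this set is a singleton. The real-infinitesimal-parameter hypothesis enters here: it ensures that the parahorics supporting non-zero vectors of $\pi$ are controlled by the fixed-point structure of $s$, so that the Harish-Chandra-Howe local character expansions on different parahorics can be coherently matched. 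The large-residue-characteristic assumption guarantees the validity of the local expansion and excludes wild phenomena.

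Next, I would compute $\underline{\mathrm{WF}}(\pi)$ explicitly from the Deligne-Langlands-Lusztig parameter $(s,x,\rho)$ of $\pi$ using Okada's construction in \cite{Oka21}. For each parahoric $P$, one extracts a pair consisting of a semisimple orbit on the reductive quotient (determined by $s$) together with a local system on a nilpotent orbit (coming from $x$ and $\rho$); applying the generalized Springer correspondence yields an orbit on the quotient, and the maximum over parahorics (after spreading to $\overline{F}$-orbits in $\mathfrak{g}$) produces $\underline{\mathrm{WF}}(\pi)$.

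To obtain the specific formula $\WF(\pi) = \{d_{BV}(\OO_{\phi_{\widehat{\pi}}})\}$, I would analyze the effect of the Aubert-Zelevinsky involution on unipotent representations. For such representations the Aubert-Zelevinsky involution coincides with the Iwahori-Matsumoto/Alvis-Curtis duality, which in the Kazhdan-Lusztig picture essentially swaps the roles of the semisimple part $s$ (the ``topologically semisimple'' component of the Frobenius action) and the nilpotent $x$, after renormalization by $d_w$. Thus $\OO_{\phi_{\widehat{\pi}}}$ encodes the semisimple-side orbit attached to $\pi$, whereas the wavefront-set computation in step two uses the nilpotent-side data; Barbasch-Vogan duality is precisely the combinatorial duality relating these two sides through the Springer correspondence.

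The hardest step, I expect, is the first one: Tsai's examples \cite{Tsa22} show that $\WF(\pi)$ need not be a singleton in general, so the unipotent plus real-infinitesimal-parameter hypotheses must be used essentially in ruling out multiple maximal orbits. A secondary obstacle is that Okada's formalism naturally produces Achar's duality $d_A$ (as in Theorem \ref{intro thm 1}), whereas the target statement uses $d_{BV}$; the two agree on special orbits but need reconciliation for non-special orbits (and for very even orbits in type $D$) by a careful case analysis.
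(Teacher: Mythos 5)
The paper does not prove this statement at all: Theorem \ref{thm computation of WF} is imported verbatim from \cite[Theorem 1.4.1]{CMO23} and is used in \S\ref{sec: main thm} as a black box, so there is no internal proof to compare your attempt against. What you have written is, in effect, an outline of the architecture of the Ciubotaru--Mason-Brown--Okada papers themselves (comparison of the geometric wavefront set with Okada's canonical unramified wavefront set, computation of the latter from the Deligne--Langlands--Lusztig parameter via the generalized Springer correspondence, and identification of the answer through the Aubert--Zelevinsky involution and a duality map). That outline is broadly faithful to how the cited result is actually established, but it is a summary of someone else's multi-paper proof rather than a proof: each of your ``steps'' (the comparison theorem, the singleton property for unipotent representations with real infinitesimal parameter, and the closed formula in terms of $d_{BV}(\OO_{\phi_{\widehat{\pi}}})$) is itself a main theorem of \cite{CMO21, CMO22, CMO23}, and none of them is rederivable by the general considerations you give.

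Two specific points in your sketch are also too loose to stand as written. First, the description of the Aubert--Zelevinsky involution as ``essentially swapping $s$ and $x$'' in the Kazhdan--Lusztig parametrization is not correct as stated; the effect of the involution on DLL parameters is subtler, and in \cite{CMO23} the formula involving $\phi_{\widehat{\pi}}$ is obtained by an explicit computation of the canonical unramified wavefront set together with properties of the duality, not by a formal interchange of the semisimple and nilpotent coordinates. Second, the discrepancy between Achar's duality $d_A$ (which takes values in pairs $(\OO,\bar{C})$ on Lusztig's canonical quotient, as in Theorem \ref{intro thm 1}) and $d_{BV}$ is not a matter of ``agreeing on special orbits'' --- $d_A$ and $d_{BV}$ live on different sets, and passing from the canonical unramified wavefront set to the geometric one loses exactly the canonical-quotient datum; this projection step is where the precise relation $\WF(\pi)=\{d_{BV}(\OO_{\phi_{\widehat{\pi}}})\}$ is extracted in \cite{CMO23}. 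Within the present paper the correct ``proof'' of the statement is simply the citation, and for the purposes of Theorem \ref{thm main} that is all that is needed.
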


\begin{remark}
By \cite[Corollary 6.0.5]{CMO23}, we may replace the $\leq $ in \eqref{eq weak packet leq} by $=$. Moreover, applying Theorem \ref{thm computation of WF}, we have 
\begin{align*}
\Pi_{\psi_0}^{\textrm{Weak}}&= \{ \pi \in \Pi(G_n)_{\lambda} \ | \ \WF(\pi) = d_{BV}( \OO^A_{\psi_0} ) \}\\
    &=\{ \pi \in \Pi(G_n)_{\lambda} \ | \ d_{BV}( \OO_{\phi_{\widehat{\pi}}} ) = d_{BV}( \OO_{\phi^0} )  \}\\
    &=\{ \pi \in \Pi(G_n)_{\lambda} \ | \ \phi_{\widehat{\pi}} \in (d_{BV})^{-1}_{\Phi(G_n)_{\lambda}} ( d_{BV}( \OO_{\phi^0} ) )  \},
    \end{align*}
    where $\phi^{0}= \phi_{\widehat{\psi_0}}$, the unique tempered $L$-parameter of $\Phi(G)_{\lambda_{\psi_0}}$. 
\end{remark}

Let us recall the statement of the Conjecture for weak local Arthur packets.

\begin{conj}[{\cite[Conjecture 3.1.2]{CMO22}}]\label{conj weak packet}
Let $\psi$ be a basic local Arthur parameter of $G$ and denote $\lambda$ the (real) infinitesimal parameter associated with $\psi$. Then $\Pi_{\psi}^{\textrm{Weak}}$ is a union of local Arthur packets.
\end{conj}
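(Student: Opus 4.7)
The plan is to deduce Conjecture \ref{conj weak packet} by converting the defining inequality $\WF(\pi) \leq d_{BV}(\OO_{\psi_0}^A)$ into a condition on the $L$-parameter of the Aubert--Zelevinsky involution $\widehat{\pi}$, and then applying the key Proposition \ref{prop fiber tempered nilpotent orbit}. First, since $\psi_0$ is basic, the swapped parameter $\widehat{\psi_0}$ has trivial restriction to $W_F \times \SL_2^A(\BC)$, so $\phi_{\widehat{\psi_0}}$ is tempered and, by Proposition \ref{prop open par}(b), must coincide with the unique open $L$-parameter $\phi^0$ of $\Phi(G_n)_\lambda$; hence $\OO' := d_{BV}(\OO_{\psi_0}^A) = d_{BV}(\OO_{\phi^0})$. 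Using Theorem \ref{thm computation of WF} (which invokes the large-residual-characteristic hypothesis), the remark preceding Conjecture \ref{conj weak packet} rewrites the weak packet as
\[
\Pi_{\psi_0}^{\textrm{Weak}} = \{ \pi \in \Pi(G_n)_\lambda : \phi_{\widehat{\pi}} \in (d_{BV})^{-1}_{\Phi(G_n)_\lambda}(\OO') \}.
\]

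For the forward containment in Theorem \ref{thm main intro}(a), take $\pi \in \Pi_{\psi_0}^{\textrm{Weak}}$. Since $\phi^0$ is tempered, Proposition \ref{prop fiber tempered nilpotent orbit} produces a local Arthur parameter $\psi'$ with $\phi_{\widehat{\pi}} = \phi_{\psi'}$, so $\widehat{\pi} \in \Pi_{\phi_{\psi'}} \subseteq \Pi_{\psi'}$. Compatibility of the Aubert--Zelevinsky involution with local Arthur packets (\S\ref{sec AZ}) then yields $\pi \in \Pi_{\widehat{\psi'}}$, and since swapping the two copies of $\SL_2(\BC)$ preserves the infinitesimal parameter while sending $\OO^A$ to $\OO^D = \OO_{\phi_{\psi'}} = \OO_{\phi_{\widehat{\pi}}}$, we obtain $\widehat{\psi'} \in (d_{BV})^{-1}_{\Psi(G_n)_\lambda}(\OO')$, as required.

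For the reverse containment in Theorem \ref{thm main intro}(b), fix $\psi \in (d_{BV})^{-1}_{\Psi(G_n)_\lambda}(\OO')$ and $\pi \in \Pi_\psi$. I would first check that any such $\psi$ must be trivial on $W_F$: writing $\psi = \bigoplus_j \rho_j \otimes S_{a_j} \otimes S_{b_j}$ and comparing $\lambda_\psi = \lambda_{\psi_0}$, which is a direct sum of unramified real characters $|\cdot|^x$ with $x \in \half{1}\Z$, each $\rho_j$ is forced to be a one-dimensional finite-order character of $W_F$ whose Frobenius eigenvalue is real; since a nontrivial finite-order character of $W_F$ cannot equal an unramified character $|\cdot|^y$ after any real twist, each $\rho_j$ is trivial. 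Consequently Conjecture \ref{conj Jiang}(i) is available for $\psi$, unconditionally so by Theorem \ref{thm Jiang B C} in the $\SO_{2n+1}$ and $\Sp_{2n}$ cases, yielding $\WF(\pi) \leq d_{BV}(\OO_\psi^A) = \OO'$; together with $\lambda_\pi = \lambda$ this places $\pi \in \Pi_{\psi_0}^{\textrm{Weak}}$.

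The principal obstacle is Proposition \ref{prop fiber tempered nilpotent orbit} itself, whose proof is combinatorial and occupies \S\ref{sec: fibers}; once that is in place, the deduction above is essentially bookkeeping involving duality, infinitesimal parameters, and the AZ involution. A secondary technical nuisance appears in the $\SO_{2n}$ case, where very-even partitions give two nilpotent orbits and the sets $(d_{BV})^{-1}_{\Phi(G_n)_\lambda}(\OO')$ and $(d_{BV})^{-1}_{\Phi(G_n)_\lambda}(\underline{p}_{\OO'})$ differ; Proposition \ref{prop nilpotent fiber} bridges the partition and nilpotent-orbit versions of the fiber so that the partition-level Proposition \ref{prop fiber tempered partition} suffices.
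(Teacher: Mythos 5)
Your argument is correct and is essentially the paper's own proof (Theorem \ref{thm main}): identify $d_{BV}(\OO_{\psi_0}^A)$ with $d_{BV}(\OO_{\phi^0})$ for the unique tempered (open) parameter $\phi^0=\phi_{\widehat{\psi}_0}$, rewrite $\Pi_{\psi_0}^{\textrm{Weak}}$ via Theorem \ref{thm computation of WF}, apply Proposition \ref{prop fiber tempered nilpotent orbit} together with $\Pi_{\phi_{\psi}}\subseteq\Pi_{\psi}$ and Aubert--Zelevinsky compatibility (\S\ref{sec AZ}) for one containment, and Conjecture \ref{conj Jiang}(i) (via Theorem \ref{thm Jiang B C}) for the other. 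Your additional check that every $\psi\in\Psi(G_n)_{\lambda}$ is trivial on $W_F$ is exactly the paper's remark that $\lambda_{\psi}$ is real if and only if $\psi|_{W_F}$ is trivial.
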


With the results in Section \ref{sec fiber L-par}, we prove Conjecture \ref{conj weak packet} for the split groups $G_n=\SO_{2n+1}(F)$, $\Sp_{2n}(F)$ or $\SO_{2n}(F)$ assuming Conjecture \ref{conj Jiang}(i) holds for all $\psi$ whose restriction to $W_F$ is trivial.

\begin{thm}\label{thm main}
Let $G_n$ be the split group $\SO_{2n+1}(F)$, $\Sp_{2n}(F)$ or $\SO_{2n}(F)$. Assume that the residue field of $F$ has sufficiently large characteristic.
\begin{enumerate}
    \item [(a)] For any basic local Arthur parameter $\psi_0$ of $G_n$, the weak local Arthur packet $\Pi_{\psi_0}^{\textrm{Weak}}$ is contained in a union of local Arthur packets
        \begin{align}\label{eq weak packet union}
        \Pi_{\psi_0}^{\textrm{Weak}}\subseteq \bigcup_{ \psi \in (d_{BV})_{\Psi(G_n)_{\lambda}}^{-1}(\OO')} \Pi_{\psi},
    \end{align}
     where $\lambda= \lambda_{\psi^0}$, $\OO'=d_{BV}(\OO_{\psi_0}^A)$, and 
     \[(d_{BV})^{-1}_{\Psi(G_n)_{\lambda}}(\OO'):=\{ \psi \in \Psi(G_n)_{\lambda} \ | \ d_{BV}(\OO_{\psi}^A)=\OO' \}.\]
     \item [(b)] Moreover, assume that Conjecture \ref{conj Jiang}(i) holds for all $\psi \in \Psi(G_n)$ whose restriction to $W_F$ is trivial. Then we have the other direction of containment
     \begin{align}\label{eq weak packet union reverse}
        \Pi_{\psi_0}^{\textrm{Weak}}\supseteq \bigcup_{ \psi \in (d_{BV})_{\Psi(G_n)_{\lambda}}^{-1}(\OO')} \Pi_{\psi},
    \end{align}
    which proves Conjecture \ref{conj weak packet}. In particular, by Theorem \ref{thm Jiang B C}, Conjecture \ref{conj weak packet} holds for split $\SO_{2n+1}(F)$ and $\Sp_{2n}(F)$ without the assumption of Conjecture \ref{conj Jiang}(i). 
\end{enumerate}
\end{thm}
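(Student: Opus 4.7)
The plan is to prove (a) and (b) separately, with (a) being the substantive direction relying on the machinery developed in the previous section and (b) being a fairly direct consequence of Jiang's conjecture once one checks the needed triviality on $W_F$. Throughout, I will use that $\psi_0$ being basic forces $\lambda := \lambda_{\psi_0}$ to be real and unramified and forces $\widehat{\psi_0}$ to be tempered.

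For part (a), I would start with $\pi \in \Pi_{\psi_0}^{\textrm{Weak}}$ and combine Theorem \ref{thm computation of WF} with \cite[Corollary 6.0.5]{CMO23} (as noted in the remark after that theorem) to replace the $\leq$ in the definition of $\Pi_{\psi_0}^{\textrm{Weak}}$ by an equality, obtaining $\WF(\pi) = \{d_{BV}(\OO_{\phi_{\widehat{\pi}}})\} = \{d_{BV}(\OO_{\psi_0}^A)\}$. Using $\OO_{\psi_0}^A = \OO_{\widehat{\psi_0}}^D = \OO_{\phi_{\widehat{\psi_0}}}$ and Proposition \ref{prop open par}(b) applied to the tempered Arthur parameter $\widehat{\psi_0}$, I identify $\phi_{\widehat{\psi_0}}$ with the open $L$-parameter $\phi^0 \in \Phi(G_n)_\lambda$. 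Thus $\phi_{\widehat{\pi}} \in (d_{BV})^{-1}_{\Phi(G_n)_\lambda}(d_{BV}(\OO_{\phi^0}))$, and the key Proposition \ref{prop fiber tempered nilpotent orbit} forces $\phi_{\widehat{\pi}}$ to be of Arthur type, say $\phi_{\widehat{\pi}} = \phi_{\psi'}$ for some $\psi' \in \Psi(G_n)$. Since $\widehat{\pi} \in \Pi_{\phi_{\widehat{\pi}}} \subseteq \Pi_{\psi'}$, the compatibility of the Aubert--Zelevinsky involution with local Arthur packets (Section \ref{sec AZ}) gives $\pi \in \Pi_{\widehat{\psi'}}$. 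Setting $\psi := \widehat{\psi'}$, I verify $\lambda_\psi = \lambda_{\psi'} = \lambda$ (the infinitesimal parameter is invariant under Arthur duality since $\lambda_{\widehat{\psi'}}(w) = \psi'(w, d_w, d_w) = \lambda_{\psi'}(w)$) and $\OO_\psi^A = \OO_{\psi'}^D = \OO_{\phi_{\widehat{\pi}}}$, so $d_{BV}(\OO_\psi^A) = \OO'$, which places $\psi$ in the desired fiber.

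For part (b), I would take any $\psi \in (d_{BV})^{-1}_{\Psi(G_n)_\lambda}(\OO')$ and any $\pi \in \Pi_\psi$, and observe that since $\psi_0$ is basic, the infinitesimal parameter $\lambda = \lambda_{\psi_0}$ decomposes as a direct sum of unramified real characters involving only the trivial character of $W_F$; matching decompositions of $\lambda_\psi$ with $\lambda$ therefore forces every irreducible summand of $\psi$ to be trivial on $W_F$. The hypothesized Conjecture \ref{conj Jiang}(i) applies to such $\psi$ and gives $\WF(\pi) \leq d_{BV}(\OO_\psi^A) = \OO' = d_{BV}(\OO_{\psi_0}^A)$, while $\lambda_\pi = \lambda_\psi = \lambda$ shows $\pi \in \Pi(G_n)_\lambda$; together these give $\pi \in \Pi_{\psi_0}^{\textrm{Weak}}$. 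The final assertion for $\SO_{2n+1}(F)$ and $\Sp_{2n}(F)$ then follows by invoking Theorem \ref{thm Jiang B C}, which supplies Conjecture \ref{conj Jiang}(i) unconditionally for parameters trivial on $W_F$ in these cases.

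The main obstacle is entirely absorbed into Proposition \ref{prop fiber tempered nilpotent orbit}: without the fact that every $L$-parameter in the Barbasch--Vogan fiber above a \emph{tempered} open parameter is of Arthur type, step 3 of the (a)-argument would only produce a packet-union indexed by $L$-parameters, not by Arthur parameters, and the proof would not close. The other ingredients---the wavefront-set formula of \cite{CMO23}, the identification of the open parameter via Proposition \ref{prop open par}, the infinitesimal-parameter invariance under $\psi \mapsto \widehat{\psi}$, and the AZ compatibility of Arthur packets---are standard or already recorded in the excerpt, and serve mainly to transport the statement about $L$-parameters back to representations.
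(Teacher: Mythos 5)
Your proposal is correct and follows essentially the same route as the paper: for (a) it uses the wavefront-set formula of Theorem \ref{thm computation of WF} (with the equality version from the remark), identifies $\phi^0=\phi_{\widehat{\psi}_0}$, applies Proposition \ref{prop fiber tempered nilpotent orbit} to conclude $\phi_{\widehat{\pi}}$ is of Arthur type, and transports back via $\Pi_{\phi_{\widehat{\psi}}}\subseteq\Pi_{\widehat{\psi}}$ and the Aubert--Zelevinsky compatibility; for (b) it applies Conjecture \ref{conj Jiang}(i) exactly as in the paper, with your explicit check that $\lambda_\psi=\lambda_{\psi_0}$ forces $\psi|_{W_F}$ trivial being a small detail the paper leaves implicit.
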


\begin{proof}
Note that
\[ \OO'=d_{BV}(\OO_{\psi_0}^A)= d_{BV}(\OO_{\widehat{\psi}_0}^{D})=d_{BV}(\OO_{\phi^0}),\]
where $\phi^0= \phi_{\widehat{\psi}_0}$ is the unique tempered $L$-parameter in $\Phi(G_n)_{\lambda}$. For Part (a), suppose $\pi \in \Pi_{\psi_0}^{\textrm{Weak}}$. Then Theorem \ref{thm computation of WF} gives
\[ \OO'=\WF(\pi)= d_{BV}( \OO_{\phi_{\widehat{\pi}}}). \]
Namely, $\phi_{\widehat{\pi}} \in (d_{BV})_{\Phi(G_n)_{\lambda}}^{-1}(   d_{BV}(\OO_{\phi^0}))$. Therefore, Proposition \ref{prop fiber tempered nilpotent orbit} implies that the $L$-parameter $\phi_{\widehat{\pi}}$ is of Arthur type. Say $\phi_{\widehat{\pi}}=\phi_{\widehat{\psi}}$. Then $\psi \in (d_{BV})_{\Psi(G_n)_{\lambda}}^{-1}(\OO') $. Also, $\widehat{\pi} \in \Pi_{\phi_{\widehat{\pi}}} =\Pi_{\phi_{\widehat{\psi}}}\subseteq \Pi_{\widehat{\psi}}$, and hence $\pi \in \Pi_{\psi}$. This verifies \eqref{eq weak packet union}.

For Part (b), suppose $\pi\in \Pi_{\psi}$ where $\psi \in (d_{BV})_{\Psi(G_n)_{\lambda}}^{-1}(\OO')$. Conjecture \ref{conj Jiang}(i) implies
    \[ \WF(\pi) \leq d_{BV}( \OO_{\psi}^A )=\OO'=d_{BV}( \OO_{\psi_0}^A ).\]
Therefore, $\pi$ is in the weak local Arthur packet $\Pi_{\psi_0}^{\mathrm{Weak}}$. This proves \eqref{eq weak packet union reverse} and completes the proof of the theorem.    
\end{proof}

\begin{remark}
    \begin{enumerate}
    \item [1.] The same proof works for inner forms of split groups $\SO_{2n+1}(F)$, $\Sp_{2n}(F)$ or $\SO_{2n}(F)$ once Arthur's theory on the local Arthur packets is developed. Especially, we need the results that $\Pi_{\phi_{\psi}} \subseteq \Pi_{\psi}$ and $\Pi_{\widehat{\psi}}= \{\widehat{\pi} \ | \ \pi \in \Pi_{\psi}\}$ in the proof.
        \item [2.] If one can verify an analogue of Proposition \ref{prop fiber tempered nilpotent orbit} for any connected reductive algebraic group $G$, inner to split, then Theorem \ref{thm main} also holds for $G$ by similar arguments. However, Conjecture \ref{conj weak packet} may not imply Proposition \ref{prop fiber tempered nilpotent orbit} in general. Therefore, Proposition \ref{prop fiber tempered nilpotent orbit} for $G_n$ has its own interests.
        
        \item [3.]Part (a) of above theorem (together with Arthur's theory) implies that any representation in a weak local Arthur packet is unitary. This proves \cite[Conjecture 3.1.3]{CMO22} for the split group $\SO_{2n+1}(F)$, $\Sp_{2n}(F)$ or $\SO_{2n}(F)$ without assumptions. We record it below using their notation.
    \end{enumerate}
\end{remark}

\begin{thm}\label{thm CMO unitary}
Assume that the residue field of $F$ has sufficiently large characteristic.
     Let $(q^{\half{1}h^{\vee}}, x ,\rho)$ be a Deligne-Langlands-Lusztig parameter of the split groups $\SO_{2n+1}(F)$, $\Sp_{2n}(F)$ or $\SO_{2n}(F)$ such that $x$ belongs to the special piece of $\mathbb{O}^{\vee}$. Then the irreducible representation $X(q^{\half{1}h^{\vee}}, x ,\rho)$ is unitary.
\end{thm}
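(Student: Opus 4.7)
The plan is to reduce Theorem~\ref{thm CMO unitary} to Theorem~\ref{thm main}(a) combined with the unitarity of members of local Arthur packets that is part of Arthur's theory. The key step is to recognize the representations $X(q^{\frac{1}{2}h^{\vee}},x,\rho)$ with $x$ in the special piece of $\mathbb{O}^{\vee}$ as the members of the weak local Arthur packet $\Pi_{\psi_0}^{\mathrm{Weak}}$, where $\psi_0$ is the basic local Arthur parameter with $\OO_{\psi_0}^{A}=\mathbb{O}^{\vee}$.

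First I would fix the basic local Arthur parameter $\psi_0\in\Psi(G_n)$ whose restriction to $\SL_2^A(\mathbb{C})$ is the Jacobson--Morozov $\mathfrak{sl}_2$-triple of $\mathbb{O}^{\vee}$ and which is trivial on $W_F\times \SL_2^D(\mathbb{C})$. By construction $\OO_{\psi_0}^A=\mathbb{O}^{\vee}$ and $\lambda_{\psi_0}(\mathrm{Fr})=q^{\frac{1}{2}h^{\vee}}$, so $\lambda_{\psi_0}$ is the real unramified infinitesimal parameter pinned down by the semisimple part of the Deligne--Langlands--Lusztig parameter in the statement. In particular $\pi:=X(q^{\frac{1}{2}h^{\vee}},x,\rho)$ is a unipotent representation with real infinitesimal parameter $\lambda_{\psi_0}$, so $\pi\in\Pi(G_n)_{\lambda_{\psi_0}}$ and Theorem~\ref{thm computation of WF} applies to give $\WF(\pi)=\{d_{BV}(\OO_{\phi_{\widehat{\pi}}})\}$.

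Next I would check that the special-piece hypothesis on $x$ is exactly the wavefront bound $\WF(\pi)\leq d_{BV}(\mathbb{O}^{\vee})$ defining $\Pi_{\psi_0}^{\mathrm{Weak}}$. The nilpotent $x$ in the DLL parameter of $\pi$ is the orbit $\OO_{\phi_{\pi}}$, and the Aubert--Zelevinsky involution reviewed in Section~\ref{sec AZ} yields, on the DLL side, a parameter $(q^{\frac{1}{2}h^{\vee}},x',\rho')$ for $\widehat{\pi}$ whose nilpotent part is $\OO_{\phi_{\widehat{\pi}}}$. The condition that $x$ lies in the special piece of $\mathbb{O}^{\vee}$ is that $\OO_{\phi_\pi}\leq\mathbb{O}^{\vee}$ and the Lusztig--Spaltenstein dual of $\OO_{\phi_\pi}$ equals that of $\mathbb{O}^{\vee}$. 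Combining this with the characterization of $\Pi_{\psi_0}^{\mathrm{Weak}}$ given in the remark following Theorem~\ref{thm computation of WF} (which rewrites $\Pi_{\psi_0}^{\mathrm{Weak}}$ as the set of $\pi\in\Pi(G_n)_{\lambda_{\psi_0}}$ with $\phi_{\widehat{\pi}}\in (d_{BV})_{\Phi(G_n)_{\lambda_{\psi_0}}}^{-1}(d_{BV}(\mathbb{O}^{\vee}))$), and invoking that $d_{BV}$ satisfies $d_{BV}\circ d_{BV}\circ d_{BV}=d_{BV}$ together with the description of the special piece, one concludes $\pi\in\Pi_{\psi_0}^{\mathrm{Weak}}$.

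Once this inclusion is established, Theorem~\ref{thm main}(a) provides a local Arthur parameter $\psi\in(d_{BV})^{-1}_{\Psi(G_n)_{\lambda_{\psi_0}}}(d_{BV}(\mathbb{O}^{\vee}))$ such that $\pi\in\Pi_{\psi}$, and Arthur's construction of local Arthur packets in \cite{Art13} guarantees that every member of $\Pi_\psi$ is unitary. This finishes the proof. The main technical obstacle is the dictionary in the previous paragraph: tracking the nilpotent $x$ through the Aubert--Zelevinsky involution and matching the special-piece condition on the DLL side with the wavefront bound on the automorphic side. This should be essentially formal given Theorem~\ref{thm computation of WF} and the special-orbit formalism used in \cite{CMO22, CMO23}, but care is needed because $\WF(\pi)$ naturally sees $\OO_{\phi_{\widehat{\pi}}}$ rather than $\OO_{\phi_\pi}=\OO_x$.
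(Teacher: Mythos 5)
Your overall plan (identify the given representations with members of $\Pi_{\psi_0}^{\textrm{Weak}}$, then apply Theorem \ref{thm main}(a) together with the unitarity of members of local Arthur packets from Arthur's theory) is exactly the paper's route, but the pivotal dictionary step is not established and, in the form you state it, is false. You adopt the normalization of \S\ref{sec par}, under which the nilpotent $x$ of the Deligne--Langlands--Lusztig parameter of $\pi$ records $\OO_{\phi_{\pi}}$, and you then claim that ``$x$ lies in the special piece of $\mathbb{O}^{\vee}$'' translates, ``essentially formally'' via the Aubert--Zelevinsky involution, into the weak-packet condition $\WF(\pi)\leq d_{BV}(\mathbb{O}^{\vee})$, which by Theorem \ref{thm computation of WF} is a condition on $\OO_{\phi_{\widehat{\pi}}}$. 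There is no such formal passage: $\OO_{\phi_{\widehat{\pi}}}$ is not determined by $\OO_{\phi_{\pi}}$, and the implication fails already in the simplest case. Take $\psi_0$ principal on $\SL_2^A(\BC)$, so $\mathbb{O}^{\vee}$ is the principal orbit of $\widehat{\mathfrak{g}}_n(\BC)$ and $d_{BV}(\mathbb{O}^{\vee})$ is the zero orbit; the Steinberg representation has DLL nilpotent (in your normalization) equal to the principal orbit, which lies in the special piece of $\mathbb{O}^{\vee}$, yet $\WF(\mathrm{St})$ is the principal orbit of $\mathfrak{g}_n$, so $\mathrm{St}\notin\Pi_{\psi_0}^{\textrm{Weak}}$. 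So the inclusion you assert in your second paragraph does not hold with the convention you fixed in your first paragraph, and the worry you flag at the end of your proposal is precisely the unresolved gap.

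There are two correct ways to close it, and you should commit to one. Either read the triple $(q^{\half{1}h^{\vee}},x,\rho)$ in the convention of \cite{CMO22, CMO23}, in which the nilpotent records $\OO_{\phi_{\widehat{\pi}}}$; then ``$x$ in the special piece of $\mathbb{O}^{\vee}$'' is literally the statement $d_{BV}(\OO_{\phi_{\widehat{\pi}}})=d_{BV}(\OO^A_{\psi_0})$, i.e.\ $\pi\in\Pi_{\psi_0}^{\textrm{Weak}}$ by Theorem \ref{thm computation of WF} (and \cite[Corollary 6.0.5]{CMO23}), and Theorem \ref{thm main}(a) plus Arthur's theory finishes the proof with no tracking of $x$ through the Aubert--Zelevinsky involution at all --- this is the paper's argument. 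Or keep your normalization $x\leftrightarrow\OO_{\phi_{\pi}}$ and bypass the wavefront set entirely: the special-piece hypothesis says $\phi_{\pi}\in(d_{BV})^{-1}_{\Phi(G_n)_{\lambda}}(d_{BV}(\OO_{\phi^0}))$ with $\phi^0$ the open tempered parameter, so Proposition \ref{prop fiber tempered nilpotent orbit} gives $\phi_{\pi}=\phi_{\psi}$ of Arthur type, whence $\pi\in\Pi_{\phi_{\psi}}\subseteq\Pi_{\psi}$ is unitary. Either repair is short, but as written your middle step is a genuine gap rather than a formality.
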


Finally, we give a remark of a characterization of anti-tempered local Arthur packets, which directly comes from \cite[Theorem 3.0.3]{CMO22}. 

\begin{remark}
    For any basic local Arthur parameter $\psi_0$ of $G$, it is stated in \cite[Theorem 3.0.3]{CMO22} that
\[ \Pi_{\psi_0}= \{ \pi \in \Pi(G)_{\lambda}\ | \   {}^K \mathrm{WF}(\pi)= d_A( \OO_{\psi_0}^A, 1 ) \},\]
 where ${}^K \mathrm{WF}(\pi)$ is the \emph{canonical unramified wavefront set} of $\pi$, $d_A$ is the duality defined in \cite{Ach03}. See \cite{CMO22} for details of these notation. By \cite[Theorem 2.6.2(1)]{CMO22}, we may rewrite it as
    \begin{align}\label{eq characterization of basic packets}
        \Pi_{\psi_0}= \{ \pi \in \Pi(G)_{\lambda}\ | \ d_A(\OO_{\phi_{\widehat{\pi}}},1) = d_A( \OO_{\psi_0}^A, 1 ) \}.
    \end{align}
   The right hand side of \eqref{eq characterization of basic packets} makes sense for any local Arthur parameter $\psi_0$, which is not necessarily basic. Indeed, the same proof of \cite[Theorem 3.0.3]{CMO22} implies that \eqref{eq characterization of basic packets} holds for any anti-tempered local Arthur parameter $\psi_0$ of $G$. This gives a characterization of anti-tempered local Arthur packets.
\end{remark} 

Similarly, if we replace the $\WF(\pi)$ by $d_{BV}(\OO_{\phi_{\widehat{\pi}}})$ in the definition of weak local Arthur packet \eqref{eq weak packet leq}, then we can generalize Theorem \ref{thm main} to any anti-tempered local Arthur parameter, which is not necessarily basic. More precisely, we have the following theorem. Note that in this case we do not need the characteristic assumption of the residue field of $F$ since we don't need to make use of Theorem \ref{thm computation of WF}.

\begin{thm}\label{thm main 2}
Let $G_n$ be the split group $\SO_{2n+1}(F)$, $\Sp_{2n}(F)$ or $\SO_{2n}(F)$. For any anti-tempered local Arthur parameter $\psi_0$, we denote $\lambda:=\lambda_{\psi_0}$ and $\OO':= d_{BV}(\OO_{\psi_0}^{A})$. Consider the set of representations
\[ \Pi_{\psi_0}^{\textrm{Weak}}:= \{ \pi \in \Pi(G_n)_{\lambda} \ | \ d_{BV}(\OO_{\phi_{\widehat{\pi}}}) \leq d_{BV}(\OO_{\psi_0}^{A})\}.\]
We have an inclusion
\begin{align}\label{eq main 2}
    \Pi_{\psi_0}^{\textrm{Weak}} \subseteq \bigcup_{ \psi \in (d_{BV})_{\Psi(G_n)_{\lambda}}^{-1}(\OO')} \Pi_{\psi}.
\end{align}
Moreover, the above inclusion is an equality if $\OO_{\phi_{\pi}} \geq \OO_{\phi_{\psi}}$ for any $\psi \in \Psi(G_n)_{\lambda}$ and $\pi \in \Pi_{\psi}$, which is already verified for the split groups $\SO_{2n+1}(F)$ and $\Sp_{2n}(F)$ in \cite[Theorem 1.15, Corollary 4.12(2)]{HLLZ22}.
\end{thm}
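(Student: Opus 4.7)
\textbf{Proof proposal for Theorem \ref{thm main 2}.} The plan is to mimic the proof of Theorem \ref{thm main}(a), but to bypass the use of Theorem \ref{thm computation of WF} (which required the large residue characteristic assumption) by exploiting directly the anti-temperedness of $\psi_0$. Concretely, write $\phi^0 := \phi_{\widehat{\psi_0}}$. Since $\psi_0$ is anti-tempered, $\widehat{\psi_0}$ is tempered, so $\phi^0$ is a tempered $L$-parameter of $G_n$ and lies in $\Phi(G_n)_\lambda$. Moreover, $\OO_{\phi^0} = \OO_{\widehat{\psi_0}}^D = \OO_{\psi_0}^A$, so $d_{BV}(\OO_{\phi^0}) = \OO'$. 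By Proposition \ref{prop open par}(a), the tempered parameter $\phi^0$ is the open $L$-parameter in $\Phi(G_n)_\lambda$, hence $\OO_\phi \leq \OO_{\phi^0}$ for every $\phi \in \Phi(G_n)_\lambda$.

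For the inclusion \eqref{eq main 2}, let $\pi \in \Pi_{\psi_0}^{\textrm{Weak}}$. Since $\phi_{\widehat{\pi}} \in \Phi(G_n)_\lambda$, the previous paragraph gives $\OO_{\phi_{\widehat{\pi}}} \leq \OO_{\phi^0}$, and the order-reversing property of $d_{BV}$ upgrades this to $d_{BV}(\OO_{\phi_{\widehat{\pi}}}) \geq d_{BV}(\OO_{\phi^0}) = \OO'$. Combined with the defining inequality $d_{BV}(\OO_{\phi_{\widehat{\pi}}}) \leq \OO'$, we obtain the equality $d_{BV}(\OO_{\phi_{\widehat{\pi}}}) = \OO'$. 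This is the crucial step: it places $\phi_{\widehat{\pi}}$ in $(d_{BV})^{-1}_{\Phi(G_n)_\lambda}(\OO')$, and Proposition \ref{prop fiber tempered nilpotent orbit} (applied to $\phi^0$, which is tempered and open in $\Phi(G_n)_\lambda$) shows $\phi_{\widehat{\pi}}$ is of Arthur type. Writing $\phi_{\widehat{\pi}} = \phi_{\widehat{\psi}}$ for some $\psi \in \Psi(G_n)_\lambda$, we compute $d_{BV}(\OO_\psi^A) = d_{BV}(\OO_{\widehat{\psi}}^D) = d_{BV}(\OO_{\phi_{\widehat{\psi}}}) = \OO'$, so $\psi \in (d_{BV})^{-1}_{\Psi(G_n)_\lambda}(\OO')$. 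Then $\widehat{\pi} \in \Pi_{\phi_{\widehat{\psi}}} \subseteq \Pi_{\widehat{\psi}}$, and the Aubert-Zelevinsky compatibility (Section \ref{sec AZ}) gives $\pi \in \Pi_\psi$.

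For the reverse inclusion under the additional hypothesis, suppose $\pi \in \Pi_\psi$ with $\psi \in (d_{BV})^{-1}_{\Psi(G_n)_\lambda}(\OO')$. The Aubert-Zelevinsky compatibility gives $\widehat{\pi} \in \Pi_{\widehat{\psi}}$, and the hypothesis $\OO_{\phi_{\widehat{\pi}}} \geq \OO_{\phi_{\widehat{\psi}}}$ (applied to the packet $\Pi_{\widehat{\psi}}$) together with $\OO_{\phi_{\widehat{\psi}}} = \OO_{\widehat{\psi}}^D = \OO_\psi^A$ yields $\OO_{\phi_{\widehat{\pi}}} \geq \OO_\psi^A$. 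Applying $d_{BV}$ and using order-reversal, $d_{BV}(\OO_{\phi_{\widehat{\pi}}}) \leq d_{BV}(\OO_\psi^A) = \OO'$, so $\pi \in \Pi_{\psi_0}^{\textrm{Weak}}$.

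The main conceptual obstacle is the step converting the inequality $d_{BV}(\OO_{\phi_{\widehat{\pi}}}) \leq \OO'$ into an equality so that Proposition \ref{prop fiber tempered nilpotent orbit} becomes applicable; in Theorem \ref{thm main} this used Theorem \ref{thm computation of WF} (via \cite[Corollary 6.0.5]{CMO23}), while here it is achieved cleanly via the openness of the tempered parameter $\phi^0$. This also explains why no characteristic assumption on the residue field is required, and why the hypothesis in the equality statement (verified for types $B$ and $C$ in \cite{HLLZ22}) is precisely what is needed to run the reverse direction.
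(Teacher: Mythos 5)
Your proposal is correct and takes essentially the same route as the paper: the reverse inclusion is verbatim the paper's argument, and for \eqref{eq main 2} the paper simply says to repeat the proof of Theorem \ref{thm main}(a) without Theorem \ref{thm computation of WF}, which is precisely what you do. Your explicit step using that the tempered parameter $\phi^0=\phi_{\widehat{\psi}_0}$ is the open parameter in $\Phi(G_n)_{\lambda}$ together with the order-reversing property of $d_{BV}$ to upgrade $d_{BV}(\OO_{\phi_{\widehat{\pi}}})\leq \OO'$ to an equality correctly supplies the detail the paper leaves implicit before invoking Proposition \ref{prop fiber tempered nilpotent orbit}.
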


\begin{proof}
    The proof of the inclusion \eqref{eq main 2} is exactly the same as the proof of Theorem \ref{thm main}(a) (without using Theorem \ref{thm computation of WF}), which we omit. Conversely, suppose $\pi$ is in the right hand side of \eqref{eq main 2}, i.e., there exists a local Arthur parameter $\psi$ such that $\pi \in \Pi_{\psi}$ and $d_{BV}(\OO_{\psi}^{A})= d_{BV}(\OO_{\psi_0}^A)$. Then $\widehat{\pi} \in \Pi_{\widehat{\psi}}$ and the assumption gives $\OO_{\phi_{\widehat{\pi}}} \geq \OO_{\phi_{\widehat{\psi}}}$. Taking Barbasch-Vogan duality, we obtain 
    \[ d_{BV}(\OO_{\phi_{\widehat{\pi}}}) \leq d_{BV}(\OO_{\phi_{\widehat{\psi}}})= d_{BV}(\OO_{\psi_0}^{A}),\]
    which implies that $\pi \in \Pi_{\psi_0}^{\textrm{Weak}}$. This completes the proof of the theorem.
\end{proof}

\section{Generalizations of weak local Arthur packets and examples}\label{sec: generalization}
In this section, we discuss possible generalization for the definition of weak local Arthur packets such that Conjecture \ref{conj weak packet intro} or \eqref{eq weak packet union} have a chance to be true. Throughout the section, we let $G= \RG(F)$ be the $F$-point of a connected reductive algebraic group $\RG$ defined over $F$, and assume there is a local Arthur packets theory for $G$ as conjectured in \cite[Conjecture 6.1]{Art89}. Let $\lambda$ be any infinitesimal parameter of $G$. Without loss of generality, we may assume that $\lambda =\lambda_{\psi}$ for some local Arthur parameter $\psi$, otherwise there is no representation of Arthur type in $\Pi(G)_{\lambda}$.

A first naive generalization is to consider the set
\begin{align}\label{eq generalization leq}
  \Pi_{\OO',\lambda}^{\textrm{WF}}:=  \{ \pi \in \Pi(G)_{\lambda} \ | \ \WF(\pi) \leq \OO'  \},
\end{align}
for any nilpotent orbit $\OO'$ of $\RG(\BC)$. However, $\Pi_{\OO',\lambda}^{\textrm{WF}}$ is not always a union of local Arthur packets since it may contain representations not of Arthur type, see the following example.  

\begin{exmp}\label{sec5 exam 1}
Take $\phi^0$ to be a tempered unramified $L$-parameter of $\SO_{2n+1}(F)$ or $\Sp_{2n}(F)$ such that $\lambda:=\lambda_{\phi^0}$ is real, and that there exists a representation $\pi_{bad}\in \Pi(G_n)_{\lambda}$ that is not of Arthur type. For any $\pi \in \Pi(G_n)_{\lambda}$, we have
\[ \WF(\pi)= \{ d_{BV}( \OO_{\phi_{\widehat{\pi}}})\}\]
by Theorem \ref{thm computation of WF}. Let $\phi_0$ be the closed $L$-parameter in $\Phi(G_n)_{\lambda}$. Then since $\OO_{\phi_{\widehat{\pi}}} \geq \OO_{\phi_0}$, we conclude that
\[ \WF(\pi) \leq d_{BV}(\OO_{\phi_0}).\]
Therefore, taking $\OO'=d_{BV}(\OO_{\phi_0})$, we have 
\[\Pi_{\OO',\lambda}^{\textrm{WF}}= \Pi(G_n)_{\lambda},\]
 which contains $\pi_{bad}$ not of Arthur type.
\end{exmp}

A natural modification of \eqref{eq generalization leq} is to add the condition of Arthur type.
\begin{align}\label{eq generalization leq A type}
    \Pi_{\OO',\lambda}^{\textrm{WF,A}} :=  \{ \pi \in \Pi(G)_{\lambda} \text{ of Arthur type} \ | \ \WF(\pi) \leq \OO'  \}.
\end{align}
Then assuming Conjecture \ref{conj Jiang}(i), we have
\begin{align}\label{eq containment generalization leq A type}
     \Pi_{\OO',\lambda}^{\textrm{WF,A}} \supseteq \bigcup_{ \substack{\psi \in \Psi(G_n)_{\lambda},\  d_{BV}(\OO_{\psi}^A) \leq \OO'} } \Pi_{\psi}.
\end{align}
Moreover, assuming Conjecture \ref{conj Jiang}(ii), the right hand side of \eqref{eq containment generalization leq A type} is exactly the union of all local Arthur packets contained in $ \Pi_{\OO',\lambda}^{\textrm{WF,A}}$, see the proof of Proposition \ref{prop generalization} below for details. However, the containment \eqref{eq containment generalization leq A type} can be strict even if we assume $\OO'= d_{BV}(\OO_{\psi}^{A})$ for some local Arthur parameter $\psi$ in $\Psi(G)_{\lambda}$, as shown in the following example.

\begin{exmp}\label{sec5 exam 2}
In this example, we adopt the notation in \cite{AM20} for enhanced $L$-parameter, which is called $L$-data there. Consider the following unipotent representations of $\SO_{11}(F)$ with real infinitesimal parameter $\lambda$,
\begin{align*}
    \pi&= L(\Delta[-1/2, -3/2]; \pi((1/2)^{-}, (3/2)^{-})),\\
    \widehat{\pi}&=L( \Delta[-3/2,-3/2]; \pi((1/2)^{-},(1/2)^{-},(3/2)^{+}) ).
\end{align*}
The $L$-parameter associated with $\widehat{\pi}$ is 
\[\phi_{\widehat{\pi}}= |\cdot|^{-3/2}\otimes S_1+ |\cdot|^{3/2}\otimes S_1+ 1\otimes S_2+1 \otimes S_2+ 1 \otimes S_4,\]
which is not of Arthur type. We have (identifying nilpotent orbits of $\SO_{11}(\BC)$ as partitions)
\[ \WF(\pi)= \{d_{BV}( \OO_{\phi_{\widehat{\pi}} })\}= \{d_{BV}([4,2,2,1,1])\}= \{[5,3,1^3]\}.\]

On the other hand, the representation $\pi$ is of Arthur type, and 
\[ \Psi(\pi):= \{\psi \ | \ \pi \in \Pi_{\psi}\}= \{\psi_1, \psi_2\},\]
where 
\begin{align*}
    \psi_1&= 1\otimes S_2 \otimes S_3 + 1 \otimes S_4 \otimes S_1,\\
    \psi_2&= 1 \otimes S_1 \otimes S_2+1 \otimes S_1 \otimes S_4+1 \otimes S_4 \otimes S_1.
\end{align*}
One can compute that
\begin{align*}
    d_{BV}(\OO^{A}_{\psi_1})= [7,2^2],\ \ 
        d_{BV}(\OO^{A}_{\psi_2})= [7,1^4].
\end{align*}
By Conjecture \ref{conj Jiang} (ii), for $i=1,2$, there exists at least one representation $\pi_i \in \Pi_{\psi_i}$ with $\WF(\pi_i)= d_{BV}(\OO^{A}_{\psi_i})$. Indeed, by the explicit formula in Theorem \ref{thm computation of WF}, we can take 
\begin{align*}
    \pi_1&=L( \Delta[-1/2,-3/2]; \pi( (1/2)^+, (3/2)^+ ),\\
    \pi_2&=L( \Delta[-3/2,-3/2], \Delta[ -1/2,-1/2]; \pi( (1/2)^+, (3/2)^+ ).
\end{align*}
Therefore, any local Arthur packet $\Pi_{\psi}$ that contains $\pi$ is not contained in the set
\[
\Pi_{[5,3,1^3],\lambda}^{\textrm{WF,A}}=\{ \pi' \in \Pi(\SO_{11}(F))_{\lambda} \ | \ \WF(\pi') \leq [5,3,1^3],\text{ and } \pi' \text{ is of Arthur type. }  \}.  \]
However, $\pi$ is in $\Pi_{[5,3,1^3],\lambda}^{\textrm{WF,A}}$. Therefore, $\Pi_{[5,3,1^3],\lambda}^{\textrm{WF,A}}$ can not be written as a union of local Arthur packets. We remark that $[5,3,1^3]= d_{BV}( \OO^{A}_{\psi})$ where  $\psi= 1 \otimes S_{3}\otimes S_{2}+1 \otimes S_1 \otimes S_4 \in \Psi(\SO_{11}(F))_{\lambda}$.
\end{exmp}

The failure of the containment \eqref{eq containment generalization leq A type} in the above example comes from Conjecture \ref{conj Jiang}(ii) and the fact that there does not exist a $\psi$ in  
\[\Psi(\pi):= \{ \psi \in \Psi(G)\ | \ \pi \in \Pi_{\psi}\}\]
such that $d_{BV}(\OO_{\psi}^{A}) \leq \OO'.$ This suggests the final modification for the definition of weak local Arthur packets. We summarize these phenomena in the following proposition.

\begin{prop}\label{prop generalization}
Let $\mathrm{G}$ be a connected reductive group and $G=\mathrm{G}(F)$. 
Assume that there is a local Arthur packets theory for $G$ as conjectured in \cite[Conjecture 6.1]{Art89}.
    Assume Conjecture \ref{conj Jiang} holds for the group $G=\RG(F)$. For any nilpotent orbit $\OO'$ of $\RG(\BC)$ and any infinitesimal parameter $\lambda$ of $G$, we define the weak local Arthur packet as follows
    \begin{align}\label{eq generalization final}
        \Pi_{\OO',\lambda}^{\textrm{Weak}}:=\{\pi \in \Pi(G)_{\lambda} \textrm{ of Arthur type}\ | \ \exists \psi \in \Psi(\pi) \text{ such that }d_{BV}(\OO_{\psi}^{A}) \leq \OO'\}.
    \end{align}
    Then 
    \[\Pi_{\OO',\lambda}^{\textrm{Weak}}=   \bigcup_{\psi \in \Psi(G)_{\lambda},\ \Pi_{\psi}\subseteq \Pi_{\OO',\lambda}^{\textrm{WF,A}} } \Pi_{\psi} \ \subseteq\  \Pi_{\OO',\lambda}^{\textrm{WF,A}}, \]
    where the containment can be strict.
\end{prop}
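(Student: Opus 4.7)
The plan is to establish the equality by proving two inclusions, with the trivial containment in $\Pi_{\OO',\lambda}^{\textrm{WF,A}}$ being immediate from the indexing constraint. Strictness is witnessed by Example \ref{sec5 exam 2}.

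For the inclusion $\Pi_{\OO',\lambda}^{\textrm{Weak}} \subseteq \bigcup_{\psi \in \Psi(G)_{\lambda},\ \Pi_{\psi}\subseteq \Pi_{\OO',\lambda}^{\textrm{WF,A}}} \Pi_{\psi}$, I would take $\pi \in \Pi_{\OO',\lambda}^{\textrm{Weak}}$ and choose a witness $\psi \in \Psi(\pi)$ satisfying $d_{BV}(\OO_{\psi}^{A}) \leq \OO'$. The key step is to show that the entire packet $\Pi_{\psi}$ sits inside $\Pi_{\OO',\lambda}^{\textrm{WF,A}}$. For any $\pi' \in \Pi_{\psi}$, Conjecture \ref{conj Jiang}(i) gives $\WF(\pi') \leq d_{BV}(\OO_{\psi}^{A}) \leq \OO'$. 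By definition $\pi'$ is of Arthur type, and it has infinitesimal parameter $\lambda_{\psi}$; since $\pi \in \Pi_{\psi}\cap \Pi(G)_{\lambda}$ forces $\lambda_{\psi}=\lambda$ (by the result of M{\oe}glin recalled in \S\ref{sec par}), we conclude $\pi' \in \Pi(G)_{\lambda}$ and hence $\pi' \in \Pi_{\OO',\lambda}^{\textrm{WF,A}}$. Thus $\pi \in \Pi_{\psi}$ contributes to the indicated union.

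For the reverse inclusion, fix $\psi \in \Psi(G)_{\lambda}$ with $\Pi_{\psi} \subseteq \Pi_{\OO',\lambda}^{\textrm{WF,A}}$ and take any $\pi \in \Pi_{\psi}$. The natural candidate witness that $\pi \in \Pi_{\OO',\lambda}^{\textrm{Weak}}$ is $\psi$ itself, so it remains to verify $d_{BV}(\OO_{\psi}^{A}) \leq \OO'$. This is exactly where Conjecture \ref{conj Jiang}(ii) is needed: it guarantees some $\pi_0 \in \Pi_{\psi}$ with $d_{BV}(\OO_{\psi}^{A}) \in \WF(\pi_0)$. Since $\pi_0$ is in $\Pi_{\psi} \subseteq \Pi_{\OO',\lambda}^{\textrm{WF,A}}$, every element of $\WF(\pi_0)$ is bounded above by $\OO'$; in particular $d_{BV}(\OO_{\psi}^{A}) \leq \OO'$, yielding the desired witness.

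Overall the argument is short, and no genuine technical obstacle arises beyond the careful deployment of both parts of Jiang's conjecture. The conceptual subtlety, illustrated by Example \ref{sec5 exam 2}, is that Conjecture \ref{conj Jiang}(ii) is what forces an entire Arthur packet $\Pi_{\psi}$ containing a given weak member to sit inside $\Pi_{\OO',\lambda}^{\textrm{WF,A}}$; without this uniform control one can produce $\pi \in \Pi_{\OO',\lambda}^{\textrm{WF,A}}$ each of whose Arthur packets has some other member violating the bound, and this is precisely the mechanism accounting for the potential strictness of the final containment.
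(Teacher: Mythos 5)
Your proof is correct and follows essentially the same route as the paper: Conjecture \ref{conj Jiang}(i) shows that any witnessing packet $\Pi_{\psi}$ lies entirely in $\Pi_{\OO',\lambda}^{\textrm{WF,A}}$, while Conjecture \ref{conj Jiang}(ii) applied to a member $\pi_0$ with $d_{BV}(\OO_{\psi}^{A})\in\WF(\pi_0)$ yields $d_{BV}(\OO_{\psi}^{A})\leq\OO'$ for the reverse inclusion, with strictness witnessed by Example \ref{sec5 exam 2}.
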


\begin{proof}
   Suppose $\pi \in \Pi_{\OO',\lambda}^{\textrm{Weak}}$. Then there exists a $\psi$ such that $ d_{BV}(\OO_{\psi}^{A}) \leq \OO'$ and $\pi \in \Pi_{\psi}$. By Conjecture \ref{conj Jiang}(i), for any $\pi' \in \Pi_{\psi}$, we have
   \[ \WF(\pi') \leq d_{BV}(\OO_{\psi}^{A}) \leq \OO',\]
   and hence $\Pi_{\psi}\subseteq \Pi_{\OO',\lambda}^{\textrm{WF,A}} $. 
   
   Conversely, suppose $\pi \in \Pi_{\psi'}$ where $\Pi_{\psi'} \subseteq \Pi_{\OO',\lambda}^{\textrm{WF,A}}$. By Conjecture \ref{conj Jiang}(ii), there exists a $\pi'\in \Pi_{\psi'}$ such that $\WF(\pi')= \{ d_{BV}(\OO_{\psi'}^A)\}$, and hence $ d_{BV}(\OO_{\psi'}^A) \leq \OO'$.
   This completes the proof of the proposition.
 \end{proof}

Assuming Conjecture 
\ref{conj Jiang}, one can see from Proposition \ref{prop generalization}
that 
$\Pi_{\OO',\lambda}^{\textrm{Weak}}$ is 
the maximal subset of $\Pi(G)_{\lambda}$ with the following properties.
\begin{enumerate}
    \item [$\oldbullet$]  $\Pi_{\OO',\lambda}^{\textrm{Weak}} \subseteq \{ \pi \in \Pi(G)_{\lambda}\ | \ \WF(\pi) \leq \OO' \}$.
    \item [$\oldbullet$] $\Pi_{\OO',\lambda}^{\textrm{Weak}}$ is a union of local Arthur packets.
\end{enumerate} 
Hence, if Conjecture \ref{conj weak packet} holds for a basic local Arthur parameter $\psi_0$ of $G$, then
\[ \Pi_{\psi_0}^{\textrm{Weak}}=\Pi_{\OO',\lambda}^{\textrm{Weak}},\]
where $\OO'= d_{BV}(\OO_{\psi_0}^A)$ and $\lambda=\lambda_{\psi_0}$. 
Therefore, the set $\Pi_{\OO',\lambda}^{\textrm{Weak}}$ can be regarded as a natural generalization of $\Pi_{\psi_0}^{\textrm{Weak}}$ and reveals the implications of Conjecture \ref{conj Jiang}.

\end{document}